\newtheorem{theorem}{Theorem}[section]
\newtheorem{thm}[theorem]{Theorem}
\newtheorem{cor}[theorem]{Corollary}
\newtheorem{lem}[theorem]{Lemma}
\newtheorem{prop}[theorem]{Proposition}
\theoremstyle{definition}
\newtheorem{defn}[theorem]{Definition}
\newtheorem{rem}[theorem]{Remark}
\newtheorem{conj}[theorem]{Conjecture}
\theoremstyle{remark}
\newcommand{\mbb}{\mathbb}
\newcommand{\QQ}{\mbb{Q}}
\newcommand{\ZZ}{\mbb{Z}}
\newcommand{\RR}{\mbb{R}}
\newcommand{\PP}{\mbb{P}}
\newcommand{\FF}{\mbb{F}}
\newcommand{\mc}{\mathcal}
\newcommand{\mcC}{\mc{C}}
\newcommand{\mcE}{\mc{E}}
\newcommand{\mcL}{\mc{L}}
\newcommand{\mcN}{\mc{N}}
\newcommand{\mcT}{\mc{T}}
\newcommand{\mcX}{\mc{X}}
\newcommand{\OO}{\mc{O}}
\newcommand{\p}{\mathbb{P}^{1}}
\newsavebox{\sembox}
\newlength{\semwidth}
\newlength{\boxwidth}
\newsavebox{\semrbox}
\newlength{\semrwidth}
\newlength{\boxrwidth}
\newcommand{\GWpp}{\langle [pt], [pt], \ldots\rangle_{0, \beta}^{X}}
\newcommand{\GWp}{\langle [pt], \ldots\rangle_{0, \beta}^{X}}
\newcommand{\tx}{\tilde{X}}
\title
{Symplectic geometry of rationally connected threefolds}
\author[Tian]{Zhiyu Tian}
\address{
Department of Mathematics \\
Stony Brook University \\ 
Stony Brook, NY, 11794}
\email{ztian@math.sunysb.edu}
\date{\today}
\begin{document}


\begin{abstract}
 We study symplectic geometry of rationally connected $3$-folds. The first result shows that rationally connectedness is a symplectic deformation invariant in dimension $3$. If a rationally connected $3$-fold $X$ is Fano or $b_2(X)=2$, we prove that it is symplectic rationally connected, i.e. there is a non-zero Gromov-Witten invariant with two insertions being the class of a point. Finally we prove that many rationally connected $3$-folds are birational to a symplectic rationally connected variety.
\end{abstract}


\maketitle



\section{Introduction}
In this paper we study the symplectic geometry of smooth projective rationally connected $3$-folds over the complex numbers. Let's first recall the relevant definitions.
\begin{defn}
A variety $X$ is called \emph{rationally connected} if two general points in $X$ can be connected by a rational curve.
\end{defn}
 
A related notion is uniruledness.

\begin{defn}
A variety $X$ is called \emph{uniruled} if there exists a rational curve through a general point.
\end{defn}

Therefore these varieties contain lots of rational curves. Of the most importance are the following two types of curves:
\begin{defn}
Let $X$ be a smooth variety. A curve $f: \p \rightarrow X$ is called \emph{free} (resp. \emph{very free}) if $f^*T_X\cong \oplus \OO_{\p}(a_i)$ with $a_i \geq 0$ (resp. $a_i \geq 1$). 
\end{defn}

A smooth projective variety is uniruled (resp. rationally connected) if and only if there is a free (resp. very free) curve.

One motivation of this paper is the following theorem, proved independently by Koll{\'a}r and Ruan.
\begin{thm}[\cite{KollarUni}, ~\cite{RuanUni}]\label{thm:KoRu}
Let $X$ be a smooth projective uniruled variety. Then there is a non-zero Gromov-Witten invariant of the form $\GWp$.
\end{thm}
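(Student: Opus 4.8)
The plan is to produce the required invariant from a single free rational curve of minimal degree, using minimality to annihilate all boundary contributions so that the Gromov–Witten number becomes an honest enumerative count. First I would invoke the equivalence recalled above: since $X$ is uniruled there is a free curve $f\colon \p \to X$, i.e. $f^*T_X \cong \bigoplus \OO_{\p}(a_i)$ with all $a_i \geq 0$. Among all free curves through a general point I choose one whose anticanonical degree $d = -K_X \cdot \beta$ is minimal, and set $\beta = f_*[\p]$. Freeness gives $H^1(\p, f^*T_X) = 0$, so $\overline{\mathcal M}_{0,1}(X,\beta)$ is smooth of the expected dimension $\dim X + d - 2$ at $[f]$, and the evaluation map $ev_1$ is dominant onto $X$ because a free curve through a general point deforms to cover $X$.

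The structural heart of the argument is to show that over a general point $p \in X$ every stable map in class $\beta$ whose marked point lands on $p$ has irreducible domain and is free. If the domain were reducible, the component meeting $p$ would be a rational curve through the general point $p$ of strictly smaller anticanonical degree; since a covering family of rational curves has a free general member, and since non-free curves of bounded degree sweep out only a proper closed subset (here I use boundedness together with bend-and-break), for $p$ outside this subset that component would itself be free, contradicting the minimality of $d$. Consequently the locus of $\overline{\mathcal M}_{0,1}(X,\beta)$ lying over a general $p$ is disjoint from the boundary and contained in the smooth interior; being closed in a proper space it is compact, so there the virtual class coincides with the ordinary fundamental class.

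With this in hand I would assemble the full invariant as $\langle [pt], \gamma_1, \ldots, \gamma_{d-2}\rangle^{X}_{0,\beta}$ with each $\gamma_i$ of complex codimension $2$, e.g. $\gamma_i = H^2$ for an ample $H$; a direct check shows the virtual dimension of $\overline{\mathcal M}_{0,d-1}(X,\beta)$ equals the total codimension $\dim X + 2(d-2)$ of the insertions, so the invariant is a number. The same minimality argument shows that the extra marked points merely record points on irreducible free curves and create no new boundary, so the relevant part of the moduli space is again smooth, compact, and carries its fundamental class. Choosing general representatives $Z_i$ of the $\gamma_i$ makes the intersection transverse and reduced, and since we work over $\CC$ with its canonical complex orientation each solution contributes $+1$; hence the invariant is nonzero as soon as it is nonempty.

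The nonemptiness — the existence of one free curve through $p$ meeting all the $Z_i$ — is the step I expect to be the main obstacle. It must be extracted from an incidence–dimension count showing that the curves of class $\beta$ through $p$ move in a large enough family to meet $d-2$ general codimension-$2$ cycles, where the freeness of $f$ is exactly what guarantees that the swept locus is large. Note that this is also where merely uniruled (rather than rationally connected) $X$ forces us to use a single point insertion: the family need not connect two general points. Alternatively, this positivity can be obtained symplectically, as in Ruan's argument, by deforming to a configuration in which the relevant count is manifestly nonzero.
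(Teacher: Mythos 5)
Your outline follows the same template as the paper's proof (minimal free curve, no breaking over a very general point, hence a proper unobstructed family, hence an enumerative invariant), but the structural heart of your version contains a step that fails: you take $\beta$ of minimal \emph{anticanonical} degree among free curves and then assert that in a reducible stable map of class $\beta$ over a general point $p$, the component through $p$ has strictly smaller anticanonical degree. That inference is only valid if the components \emph{not} passing through $p$ have positive total $-K_X$-degree, which is false for a merely uniruled $X$. A uniruled threefold can contain rational curves $R$ with $-K_X \cdot R = 0$ (e.g.\ rigid curves with normal bundle $\OO(-1)\oplus\OO(-1)$) or even $-K_X \cdot R < 0$ (e.g.\ rational curves inside a $\PP^1$-bundle over a general type surface, lying in a single fiber of the projection to $\PP^1$). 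For such $R$, the decomposition $\beta = (\beta - R) + R$ satisfies $-K_X\cdot(\beta - R) \geq -K_X\cdot\beta$, so your minimality yields no contradiction, and you cannot conclude that the locus over $p$ misses the boundary; broken configurations (free curve of class $\beta - R$ through $p$, attached to $R$) are exactly the kind of excess boundary your argument needs to exclude but does not. This is why the paper (following Koll\'ar) takes $C$ minimal with respect to an ample \emph{polarization} $A$: every irreducible component of a stable map has strictly positive $A$-degree, so the free component through $p$ would have strictly smaller $A$-degree than $\beta$, contradicting minimality; the same reasoning also rules out multiple covers, which your "irreducible domain" dichotomy quietly skips. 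Anticanonical minimality would suffice for Fano $X$, but not in the generality of the theorem.

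Secondly, the step you defer as "the main obstacle" — positivity of the final count — is precisely what the no-breaking statement is designed to deliver, and it is where the paper closes the argument: once every stable map of class $[C]$ through $p$ is an irreducible free curve, the family over $p$ is nonempty (free curves of class $[C]$ pass through the very general point by construction), proper, and smooth of the expected dimension, so the invariant $\langle [pt], A^2, \ldots, A^2\rangle_{0,[C]}^{X}$ is the honest number of such curves meeting general complete-intersection representatives of $A^2$. Positivity then follows inside this proper family: any positive-dimensional subfamily through $p$ and one further fixed point would break by bend-and-break, which properness plus no-breaking forbids, so the evaluation maps are finite, and finiteness together with very ampleness of $A$ makes the intersection number strictly positive. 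So your plan does complete once the minimality is taken with respect to a polarization; as written, however, the key no-breaking claim is unjustified and in general false.
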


If $X$ and $X'$ are two smooth projective varieties, then they can also be considered as symplectic manifolds with symplectic form $\omega$ and $\omega'$ given by the polarizations. We say that $X$ and $X'$ are \emph{symplectic deformation equivalent} if there is a family of symplectic manifolds $(X_t, \omega_t)$ diffeomorphic to each other such that $(X_0, \omega_0) \cong (X, \omega)$ and $(X_1, \omega_1) \cong (X', \omega')$. Since Gromov-Witten invariants are symplectic deformation invariants, Koll{\'a}r and Ruan's result implies that uniruledness is a symplectic deformation invariant. Then it is natural to ask if rationally connectedness is also a symplectic deformation invariant. In fact, Koll{\'a}r conjectured the following:
\begin{conj}[Koll{\'a}r]\label{conj:Kollar}
Let $X$ and $X'$ be two smooth projective varieties which are symplectic deformation equivalent. Then $X$ is rationally connected if and only if $X'$ is.
\end{conj}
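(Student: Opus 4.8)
The full conjecture is open, so I will aim at the threefold case, which is the regime the rest of the paper controls. The guiding principle is that for a smooth projective $3$-fold rational connectedness is equivalent to being uniruled \emph{together with} triviality of the maximal rationally connected (MRC) fibration, i.e.\ its base being a point. Since both the Gromov--Witten invariants and the entire topology of a variety are symplectic deformation invariants, the plan is to detect each half of this equivalence symplectically. Given $X$ rationally connected and $X'$ symplectic deformation equivalent to it, I would first invoke Theorem~\ref{thm:KoRu}: $X$ is uniruled, hence carries a nonzero invariant $\GWp$; this invariant is preserved under symplectic deformation, so $X'$ is uniruled as well. It then remains to show that the MRC quotient $X' \dashrightarrow Z$ has $\dim Z = 0$, knowing only that the corresponding quotient for $X$ is a point. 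Since $X'$ is uniruled, $\dim Z \in \{0,1,2\}$, and I must exclude $\dim Z = 1$ and $\dim Z = 2$.

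The case $\dim Z = 1$ is settled by topology alone. A non-uniruled curve has genus $g \geq 1$, so after resolving the MRC map the pullback of a nonzero holomorphic $1$-form on $Z$ gives $h^{1,0}(X') \geq 1$, whence $b_1(X') \geq 2$. But $b_1$ is a diffeomorphism invariant and rationally connected varieties are simply connected, so $b_1(X) = 0 = b_1(X')$, a contradiction. (One may equally compare $\pi_1$, which is likewise a symplectic deformation invariant.)

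The case $\dim Z = 2$ is the heart of the matter and where I expect the main obstacle. Here $X'$ is birational to a conic bundle over a non-uniruled surface $Z$, and the minimal model program lets me assume $X'$ itself carries a Mori-fiber-space structure of this shape. The difficulty is that topology is genuinely insufficient: taking $Z$ to be a Barlow surface (simply connected, of general type, with $p_g = 0$), the product $Z \times \p$ is uniruled but not rationally connected, yet shares $\pi_1$, $b_1$, $b_2$ and all the obvious topological invariants with honest rationally connected $3$-folds; a similar coincidence of $b_2$ occurs for $Z$ a $K3$ surface. Thus non-rational-connectedness must be read off from Gromov--Witten invariants rather than from the underlying manifold. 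On the non-rationally-connected side this direction is favorable: a non-uniruled surface has no rational curve through a general point, so every rational curve in $X'$ is forced into a fiber of $X' \to Z$ lying over one of the (at most countably many) rational curves contained in $Z$; two generic points then have distinct images in $Z$ and cannot be joined, and a positivity argument forces every genus-zero two-point invariant $\GWpp$ to vanish for all $\beta$.

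The genuinely hard half is the converse input on $X$. To reach a contradiction I need a nonzero invariant $\GWpp$ for the rationally connected $3$-fold $X$ — that is, that $X$ is \emph{symplectically} rationally connected — since such an invariant is a symplectic deformation invariant and would be inherited by $X'$, contradicting the vanishing just established. Proving this non-vanishing for an \emph{arbitrary} rationally connected $3$-fold is exactly the delicate point. I would first secure it where the geometry is most transparent (Fano $3$-folds and those with $b_2 = 2$, via explicit very free curves together with a dimension count absorbing the two point-constraints), and then propagate it through birational modifications, reducing a general $X$ to one of these model cases. Controlling the behavior of the relevant two-pointed invariant under the blow-ups and flips of the program — so that non-vanishing is neither manufactured nor destroyed spuriously — is where the real work lies, and it is precisely this analysis that the paper's later results on Fano threefolds, on the $b_2 = 2$ case, and on birational reduction are designed to supply.
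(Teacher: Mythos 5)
Your reduction (uniruledness via Theorem~\ref{thm:KoRu}, then excluding a one-dimensional MRC base by simple connectedness and Hodge theory) agrees with the paper up to the surface case, and your observation that a non-rationally-connected $X'$ has all two-point invariants $\langle [pt],[pt],\ldots\rangle_{0,\beta}^{X'}$ equal to zero is correct. The gap is in the other half of your surface case: you need every rationally connected $3$-fold $X$ to carry a nonzero two-point invariant, i.e.\ to be symplectically rationally connected, and you propose to get this from the Fano and $b_2=2$ cases by propagating non-vanishing through the birational modifications of the MMP. But that propagation is precisely an open problem, acknowledged in this very paper: it is not known whether symplectic rational connectedness is a birational invariant, and Theorem~\ref{thm:SRC} only produces \emph{some} SRC resolution of \emph{some} birational model (with an extra hypothesis in the $Q$-Fano case), which cannot be transported back to $X$ itself. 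So your plan runs through a statement strictly stronger than anything available, and the proof does not close.

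The paper avoids the two-point invariant altogether. Assuming the MRC base $S$ of $X'$ is a surface, the fibration gives on $X'$ the data $\langle [pt]\rangle_{0,[C]}^{X'}=1$ and $-K_{X'}\cdot C=2$ for the fiber class $[C]$; both are symplectic deformation invariants, so they hold on $X$, and enumerativity of this \emph{one-point} invariant makes $X$ itself birational to a conic bundle over a \emph{rational} surface (rational connectedness of $X$ enters only through this much weaker conclusion). For a smooth conic bundle $Y$ over a rational surface one can prove (Theorem~\ref{thm:conic}) a nonzero invariant $\langle [C],\ldots\rangle_{0,\beta}^{Y}$ with a \emph{curve-class} insertion, and --- this is the technical heart --- such an invariant, with $C$ kept disjoint from all exceptional loci, transfers across the blow-ups and blow-downs of a weak factorization of $Y \dashrightarrow X$ via the relative/descendant correspondence (Theorem~\ref{thm:invariance}). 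The resulting nonzero descendant invariant on $X$, hence on $X'$, forces curves of class $\beta'$ to sweep out $S$ by rational curves through a general point, contradicting Graber--Harris--Starr. In short, the trick you are missing is to downgrade the required invariant from two point insertions to one curve insertion: the weaker invariant is both provable for the conic-bundle model and birationally controllable, which the two-point invariant is not (yet).
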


The first evidence is the following theorem of Voisin~\cite{VoisinRC}.
\begin{thm}[~\cite{VoisinRC}]
Let $X$ and $X'$ be two smooth projective $3$-folds which are symplectic deformation equivalent . If $X$ is Fano or rationally connected with $b_2(X)=2$, then $X'$ is also rationally connected. 
\end{thm}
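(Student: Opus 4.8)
The plan is to deduce rational connectedness of $X'$ from that of $X$ in three stages: transfer uniruledness via Gromov--Witten theory, set up the maximal rationally connected (MRC) fibration of $X'$, and then show its base is a point. For the first stage, note that $X$ is Fano or rationally connected, hence uniruled, so by Theorem~\ref{thm:KoRu} there is a non-zero invariant $\langle [pt], \ldots\rangle_{0,\beta}^{X}$. Gromov--Witten invariants are symplectic deformation invariants and the point class is preserved under the diffeomorphism identifying $X$ and $X'$, so the corresponding invariant $\langle [pt], \ldots\rangle_{0,\beta'}^{X'}$ is non-zero; the point insertion forces a genus-zero curve of class $\beta'$ through a general point of $X'$, so $X'$ is uniruled.

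For the second stage, let $\pi \colon X' \dashrightarrow Z$ be the MRC fibration; its general fibre is rationally connected and $Z$ is not uniruled (Graber--Harris--Starr). Rational connectedness of $X'$ is equivalent to $\dim Z = 0$, and uniruledness forces $\dim Z \leq 2$. After blowing up I may assume $\pi$ is a morphism on a smooth model $\tilde X \to X'$, and I must exclude $\dim Z = 1$ and $\dim Z = 2$. Throughout I will use that $b_1$, $b_2$, $\pi_1$, and $c_1^3$ are symplectic deformation invariants (the first three are diffeomorphism invariants, and $c_1$ is the Chern class of any compatible almost complex structure), and that rational connectedness of $X$ gives $b_1(X)=0$ and $\pi_1(X)=1$; hence $b_1(X')=0$ and $\pi_1(X')=1$.

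For the third stage: if $\dim Z = 1$ then $Z$ is a smooth curve, non-uniruled hence of genus $g\geq 1$, so $b_1(\tilde X) \geq b_1(Z) = 2g > 0$; since $b_1$ is a birational invariant of smooth projective varieties this contradicts $b_1(X')=0$. The remaining case $\dim Z = 2$, where $\tilde X \to Z$ is a fibration in rational curves over a non-uniruled surface, is the heart of the matter. When $b_2(X)=2$ one has $b_2(X')=2$, and since $h^{2,0}(X')\geq 1$ would force $b_2\geq 3$, I obtain $h^{2,0}(X')=0$ and Picard number $\rho(X')=2$; the fibration then constrains $Z$ to have very small second Betti number, so by the classification of surfaces $Z$ is either excluded by its $b_2$ pushing $b_2(X')$ past $2$, or is forced to be a fake projective plane, which has infinite $\pi_1$ and is excluded by $\pi_1(X')=1$. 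In the Fano case I instead use $c_1(X)^3=(-K_X)^3>0$: the same positivity holds for $X'$, whereas a rational-curve fibration over a non-uniruled (hence $K_Z$-pseudoeffective) surface should force $c_1(X')^3\leq 0$, a contradiction. In all cases $\dim Z = 0$, so $X'$ is rationally connected.

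The main obstacle is precisely the case $\dim Z = 2$. The difficulty is twofold: keeping track of Betti and Chern numbers through the blow-ups needed to turn the MRC fibration into a morphism, so that the clean inequalities such as $b_2(\tilde X)\geq b_2(Z)+1$ and the sign of $c_1^3$ genuinely survive; and ruling out the delicate non-uniruled surfaces of small invariants --- fake projective planes, Enriques surfaces, and general-type surfaces with $p_g=q=0$ --- which evade any single numerical invariant and must be excluded by combining $b_2$, $\pi_1$, and the Chern-number positivity.
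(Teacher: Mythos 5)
Your stages 1--2 and your treatment of the curve-base case match what the paper does (the paper proves this statement as a special case of its Theorem~\ref{thm:Invariance}): uniruledness of $X'$ comes from Theorem~\ref{thm:KoRu}, the base $Z$ of the MRC fibration is non-uniruled by Graber--Harris--Starr \cite{GHS03}, and a one-dimensional base is excluded topologically. The genuine gap is in the crucial case $\dim Z=2$, where your argument both leaves the key difficulty unresolved and, in one place, rests on a false implication. First, as you yourself flag, all of your numerical invariants ($b_2=2$, $c_1^3>0$, $\pi_1$) live on $X'$, while the fibration only exists on a blown-up model $\tilde X$: $b_2(\tilde X)$ is unbounded and $c_1(\tilde X)^3\neq c_1(X')^3$, so neither the bound ``$b_2(X')=2$ constrains $b_2(Z)$'' nor the sign of $c_1^3$ survives the modification; naming this as ``the main obstacle'' does not overcome it, and overcoming it \emph{is} the theorem. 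Second, and independently of the blow-up issue, your Fano-case pivot is false: a $\PP^1$-fibration over a non-uniruled surface need not have $c_1^3\leq 0$. Take $\PP^1\times Z$ with $Z$ a minimal surface of general type; then $Z$ is non-uniruled and $K_Z$ is pseudoeffective, yet $c_1(\PP^1\times Z)^3=6K_Z^2>0$. So even if the MRC fibration were an honest morphism on $X'$ itself, no contradiction with $c_1(X')^3>0$ arises. Your $b_2=2$ case has the analogous defect: excluding Enriques surfaces, fake projective planes, and general-type surfaces with $p_g=q=0$ requires a bound like $b_2(Z)\leq b_2(X')-1$, which is exactly what you cannot establish on a blown-up model.

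This is precisely why the paper abandons classical invariants in the surface case and uses Gromov--Witten theory, which is deformation-invariant and (after real work) birationally controllable. In the paper's proof, the fiber class $C$ of the MRC fibration gives $\langle [pt]\rangle_{0,[C]}^{X'}=1$ and $-K_{X'}\cdot C=2$; both transfer to $X$, exhibiting $X$ as birational to a smooth conic bundle $Y\rightarrow \Sigma'$ over a rational surface. Theorem~\ref{thm:conic} then produces a non-zero invariant $\langle [C],\ldots\rangle^{Y}_{0,\beta}$, and weak factorization together with the relative/absolute correspondence (Theorems~\ref{thm:Correspondence} and~\ref{thm:invariance}) transports it to a non-zero descendant invariant on $X$, hence on $X'$. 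Since the remaining insertions lie in $H^{\geq 4}(X',\QQ)$, they can be represented by cycles missing any given fiber, so the curves of class $\beta'$ cannot be contained in fibers of $X'\dashrightarrow S$; their images then sweep out $S$, contradicting its non-uniruledness. If you want to salvage a purely topological proof, you would have to exclude exactly the conic bundles over non-uniruled surfaces with $p_g=q=0$, and the $\PP^1\times Z$ computation above shows that Chern and Betti numbers alone cannot do this.
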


Our first result is the proof of the above conjecture in dimension $3$.

\begin{thm}\label{thm:Invariance}
The conjecture \ref{conj:Kollar} is true in dimension $3$.
\end{thm}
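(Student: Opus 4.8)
The plan is to detect rational connectedness by a \emph{symplectic} invariant, namely the non-vanishing of a genus-zero Gromov--Witten invariant of the form $\GWpp$ with two point insertions. Since Gromov--Witten invariants are symplectic deformation invariants, such non-vanishing is preserved under symplectic deformation equivalence; conversely, a nonzero $\GWpp$ produces, for a generic compatible almost complex structure, a rational curve through two general points, and one checks by the usual positivity/degeneration argument that this forces the projective manifold to be rationally connected. Thus it suffices to prove one implication: if $X$ is a smooth projective rationally connected $3$-fold and $X'$ is symplectic deformation equivalent to $X$, then $X'$ is rationally connected; the reverse follows by symmetry. Call a variety with a nonzero $\GWpp$ \emph{symplectically rationally connected}; the entire theorem reduces to showing that every rationally connected $3$-fold admits such an invariant, after which deformation invariance transports it to $X'$.

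First I would record the easy reductions. Applying Theorem~\ref{thm:KoRu} to $X$ gives a nonzero $\GWp$, which by deformation invariance is nonzero on $X'$, so $X'$ is uniruled. Betti numbers and the cohomology ring are diffeomorphism, hence symplectic deformation, invariants, so $b_1(X')=b_1(X)=0$. Consider the maximal rationally connected fibration $X'\dashrightarrow B'$ with $B'$ non-uniruled and $\dim B'\in\{0,1,2\}$; the goal is $\dim B'=0$. The case $\dim B'=1$ is excluded at once: a fibration over a curve of genus $g\ge 1$ injects $H^1(B')$ into $H^1(X')$, forcing $b_1(X')\ge 2g\ge 2$, contradicting $b_1(X')=0$.

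It remains to rule out $\dim B'=2$, i.e. that $X'$ is birational to a conic bundle over a non-uniruled surface $B'$. When $B'$ carries a holomorphic two-form, pulling it back gives $H^0(X',\Omega^2_{X'})\neq 0$, and such a form obstructs the connecting rational curves recorded by $\GWpp$; since $X$ is rationally connected it carries no such form, so the deformation-invariant non-vanishing on $X$ is incompatible with a two-form on $B'$. The genuinely delicate situation, and the \textbf{main obstacle}, is a conic bundle over a surface $B'$ with $p_g(B')=q(B')=0$ and $\kappa(B')\ge 0$ (Enriques surfaces and certain surfaces of general type), where every Hodge-theoretic obstruction vanishes and one cannot see the non-rational-connectedness of $X'$ at the level of $H^*$.

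To close this case I would return to producing a nonzero $\GWpp$ on $X$ itself. Running the minimal model program, $X$ is birational to a Mori fiber space $Y\to S$ over a rationally connected base, of one of three types: a Fano $3$-fold ($\dim S=0$), a del Pezzo fibration over $\p$ ($\dim S=1$), or a conic bundle over a rational surface ($\dim S=2$). For the Fano and $b_2=2$ cases the required non-vanishing is available; the work is to establish $\GWpp\neq 0$ for $X$ in the remaining fibration cases by exhibiting very free curves and computing the invariant on a convenient model, and then to transport this non-vanishing across the divisorial contractions and flips of the program. The hard point here is precisely that Gromov--Witten invariants are not birational invariants, so the comparison must be made either by direct construction on a model within the same deformation family or by a degeneration argument inside the symplectic family $(X_t,\omega_t)$. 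Once $\GWpp\neq 0$ is known for $X$, deformation invariance yields $\GWpp\neq 0$ on $X'$, which forces $X'$ rationally connected and in particular excludes $\dim B'=2$, completing the proof.
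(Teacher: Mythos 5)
Your plan has a genuine gap: it reduces Koll\'ar's conjecture in dimension $3$ to the claim that \emph{every} smooth projective rationally connected $3$-fold carries a nonzero invariant $\GWpp$, and then to the transport of this non-vanishing across the divisorial contractions and flips of the MMP. Both of these are open problems, and you leave them as ``the work'' without resolving them. The paper itself only proves $\GWpp\neq 0$ when $X$ is Fano or $b_2(X)=2$ (Theorem~\ref{thm:SymRC}), and it states explicitly that it is \emph{not known} whether symplectic rational connectedness is a birational invariant; for the fibration cases the paper only produces an SRC variety \emph{birational} to $X$ (Theorem~\ref{thm:SRC}), which by itself says nothing about $X$. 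So the steps you flag as ``the hard point'' are precisely the content that is missing, and your argument for excluding a surface base (including the delicate $p_g=q=0$ case) never gets off the ground, because it presupposes the unproven non-vanishing on $X$.

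The paper's proof avoids two point insertions altogether, and this is the idea you are missing. Arguing by contradiction, suppose the MRC quotient of $X'$ is a non-uniruled surface $S$. Then the fiber class $[C]$ satisfies $\langle [pt]\rangle^{X'}_{0,[C]}=1$ and $-K_{X'}\cdot C=2$; both conditions transport to $X$ by deformation invariance, and since this invariant is enumerative, $X$ itself is birational to a (smooth, projective) conic bundle $Y$ over a rational surface. Theorem~\ref{thm:conic} produces on $Y$ a nonzero invariant of the form $\langle [C],\ldots\rangle^{Y}_{0,\beta}$, whose first insertion is the Poincar\'e dual of the fiber \emph{curve class} --- not a point class. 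The technical heart of the paper (Theorem~\ref{thm:invariance}, proved via weak factorization, deformation to the normal cone, and the relative/absolute correspondence for descendant invariants in the style of Hu--Li--Ruan and Maulik--Pandharipande) shows that invariants of exactly this shape, with first insertion a curve class disjoint from all blow-up centers, \emph{are} transportable across blow-ups and blow-downs. This yields a nonzero descendant invariant $\langle [C],\tau_{d_1}A_1,\ldots\rangle^{X}_{0,\beta'}$, hence the same on $X'$. Finally, the curves of class $\beta'$ cannot all lie in fibers of $X'\dashrightarrow S$: the remaining insertions are of the form $\tau_dA$ with $A\in H^{\geq 4}(X',\QQ)$ and can be represented by cycles avoiding any given fiber, so the curves must dominate $S$, making $S$ uniruled and contradicting Graber--Harris--Starr. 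The lesson is that the weaker, curve-class-anchored invariant is both strong enough to force uniruledness of the base and weak enough to admit a birational invariance theorem, which is what lets the proof close without solving the SRC problem.
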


The idea of the proof ( which is motivated by the proof in~\cite{VoisinRC}) is the following. It suffices to show that the \emph{maximal rationally connected quotient (MRC-quotient)} of $X'$ is a point. By the result of Koll{\'a}r and Ruan
~\ref{thm:KoRu}, the MRC quotient is either a surface, a curve or a point. For topological reasons, it cannot be a curve. If it is a surface, then $X$ is birational to a smooth $3$-fold $Y$ which is a conic bundle over a smooth rational surface. On this new threefold, we can find a non-zero genus zero Gromov-Witten invariant of the form $\langle [C], \ldots \rangle_{0, \beta}^{X'}$, where $C$ is the Poincare dual of the curve class of a general fiber. By weak factorization, we can factorize the birational map from $Y$ to $X$ by a sequence of blow-ups and blow-downs. Then using the ideas developed in ~\cite{MP} and ~\cite{HLR}, we show that there is a similar non-zero descendant Gromov-Witten invariant on $X$, hence on $X'$. So the MRC quotient of $X'$ cannot be a surface.

This is clearly different from Theorem ~\ref{thm:KoRu}. One would like to know if there is a non-zero Gromov-Witten invariant of the form $\GWpp$ on a smooth projective rationally connected variety. One dimensional rationally connected variety is just $\PP^1$. So this is true. It is also easy to prove this in dimension $2$ (c.f. Proposition~\ref{prop:surface}). In general, this is very difficult since the moduli space might be reducible and there might be components whose dimension are higher than the expected dimension. Then one has to do the computation on the \emph{virtual fundamental class} in order to get the Gromov-Witten invariants. The components of higher dimensions can contribute negatively to the invariant, thus making it $0$. 

Our second theorem addresses this question in some special cases.

\begin{thm}\label{thm:SymRC}
Let $X$ be a smooth projective rationally connected $3$-fold. If $X$ is Fano or $b_2(X)=2$, then there is a non-zero Gromov-Witten invariant of the form $\GWpp$. Here $b_2(X)$ is the second Betti number of $X$.
\end{thm}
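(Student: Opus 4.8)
The plan is to realize the invariant as a positive enumerative count of very free rational curves through two general points, in a curve class chosen minimal enough that the virtual fundamental class cannot kill it. First I would use the birational and Mori-theoretic structure of $X$ to pin down a good class $\beta$. When $b_2(X)=2$ the Picard number is two, so the minimal model program yields an extremal contraction, and together with the reduction behind Theorem~\ref{thm:Invariance} this presents $X$, up to the birational modifications already controlled there, as a Mori fiber space --- a conic bundle or del Pezzo fibration --- over a rational base. In the Fano case I would instead lean on the classification and on the existence of dominating families of rational curves of low anticanonical degree. In both situations the aim is the same: select a class $\beta$ of minimal value $-K_X\cdot\beta$ among those carrying a very free curve $f\colon\p\to X$, so that $f^*T_X\cong\OO_{\p}(a_1)\oplus\OO_{\p}(a_2)\oplus\OO_{\p}(a_3)$ with each $a_i\geq 1$; such a curve exists because $X$ is rationally connected.

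Next I would carry out the computation on $\ol{\mc{M}}_{0,n}(X,\beta)$, whose virtual dimension is $-K_X\cdot\beta+n$. Choosing $n\geq 2$ and imposing two point conditions together with $n-2$ further general constraints cuts this down to a zero-dimensional virtual cycle: the two point classes impose complex codimension $6$ and the remaining insertions absorb the excess $-K_X\cdot\beta+n-6$, which is exactly the role of the $\ldots$ in $\GWpp$. Very freeness of $f$ makes the evaluation map $\mathrm{ev}\colon\ol{\mc{M}}_{0,n}(X,\beta)\to X^{n}$ dominant onto the first two factors, so curves in class $\beta$ really do pass through two general points $p,q$. The decisive feature is that for a very free curve $H^1(\p,f^*T_X)=0$; hence at such a stable map the moduli space is unobstructed and smooth of the expected dimension, the virtual class there agrees with the ordinary fundamental class, and each such curve is counted with strictly positive multiplicity. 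If every stable map meeting a general pair $(p,q)$ (and the auxiliary constraints) is of this unobstructed type, the invariant is a positive enumerative number and we are done.

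The hard part, exactly as flagged in the introduction, is that $\ol{\mc{M}}_{0,n}(X,\beta)$ may acquire components of dimension larger than expected --- characteristically from reducible or multiply-covered stable maps --- on which the virtual class can contribute with the wrong sign. The crux of the proof is therefore to show that over a general choice of constraints these excess strata either do not appear or contribute non-negatively. This is where the minimality of $-K_X\cdot\beta$ is meant to pay off: a degenerate stable map through two general points would force a proper subcurve of strictly smaller anticanonical degree to both pass through a general point and deform, and analyzing the freeness of such subcurves should either contradict minimality or confine the bad locus to a proper closed subset of $X\times X$ that a general pair avoids. The Mori fiber space structure (respectively the classification) secured in the first step is precisely what makes this degeneration analysis tractable, since it restricts the numerical classes available to the components. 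I expect establishing this unobstructedness-and-reducedness over general points --- so that the virtual count collapses to an honest positive integer --- to be the main obstacle.
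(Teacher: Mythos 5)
Your reduction of the $b_2(X)=2$ case breaks down at the first step. A $K_X$-negative extremal contraction on a threefold with $\rho(X)=2$ need not be of fiber type: it can be divisorial, of types (E1)--(E5) in Theorem~\ref{thm:Mori}, so $X$ is in general the blow-up of a (possibly singular, even non-Gorenstein) $Q$-Fano threefold rather than a conic bundle or del Pezzo fibration. You cannot sweep this under ``birational modifications already controlled'' elsewhere in the paper: the invariance result (Theorem~\ref{thm:invariance}) only transports invariants carrying a curve-class insertion $[C]$, and it produces descendant insertions, not point insertions; the paper states explicitly that birational invariance of invariants of the form $\GWpp$ is open. Indeed, if that reduction were available, the whole difficulty of the theorem would evaporate. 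The paper instead treats the divisorial cases directly: blow-ups of smooth Fano threefolds (Corollary~\ref{cor:fanoblowup}), Gorenstein $Q$-Fano targets via smoothing and balanced very free curves in the smooth locus (Proposition~\ref{prop:Gorenstein}, Theorem~\ref{thm:balanced}, Proposition~\ref{prop:QFano}), and a separate, delicate analysis for the (E5) contraction of a $\PP^2$ with normal bundle $\OO_{\PP^2}(-2)$, whose target is a non-Gorenstein $Q$-Fano of Picard number one.

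In the Fano case your outline does match the paper's strategy (minimal very free curve, positivity at unobstructed maps, excluding bad loci by general constraints), but the step you defer --- controlling the excess strata --- is the entire content of the proof, and minimality of $-K_X\cdot\beta$ alone does not deliver it. The paper first proves the quantitative bound that some very free curve has $-K_X\cdot C\le 6$ (Theorem~\ref{thm:lowdegree}, via the Mori--Mukai classification of extremal rays and gluing of free curves); it is this bound, combined with the fact that any curve through a very general point has $-K_X$-degree at least $2$, that makes the list of degenerate configurations finite (extra components of degree $1$ or $2$ only), each excludable by bend-and-break and by choosing constraint curves to miss finitely many rigid curves. Moreover, a degenerate stable map can contain non-free components that sweep out only a surface, or components passing through neither of the two chosen points; these are not reached by your dichotomy (``a subcurve of smaller degree through a general point contradicts minimality''), and proving that such strata contribute non-negatively in general is precisely the open problem flagged in the introduction. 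So the proposal identifies the right difficulty but is missing the two ideas --- the effective degree bound and the resulting finite case-by-case exclusion --- that the paper uses to resolve it.
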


Next we would like to mention the so called "symplectic birational geometry program". We start with some definitions.

\begin{defn}
A symplectic manifold is \emph{symplectic uniruled} (resp. \emph{symplectic rationally connected (SRC)}) if there is a non-zero Gromov-Witten invariant of the form $\GWp$ (resp. $\GWpp$).
\end{defn}

There are two basic questions about these definitions: symplectic birational invariance and whether or not a smooth projective uniruled (rationally connected) variety is symplectic uniruled (SRC). For symplectic uniruledness, the answer is positive (~\cite{KollarUni}, ~\cite{RuanUni}, ~\cite{HLR}). It is not known if symplectic rationally connectedness is a (symplectic) birational invariant, although we do expect this to be true. And as noted above, it is also not known if rationally connected projective manifolds are SRC.

One could try to prove that a rationally connected variety is SRC by showing the birational invariance and try to find in each birational class a "good" representative which is SRC. In this paper, we try to carry out the second part in dimension $3$.

By the minimal model program in dimension $3$, every rationally connected variety is birational to one of the following
\begin{enumerate}
\item a conic bundle over a rational surface,
\item a fiberation over $\PP^1$ with general fiber a Del Pezzo surface, or
\item a $Q$-Fano threefold.
\end{enumerate}

Here is our third theorem.

\begin{thm}\label{thm:SRC}
In the first two cases listed above, there is a resolution which is SRC. In the last case, if the smooth locus is rationally connected, then there is a resolution which is SRC.
\end{thm}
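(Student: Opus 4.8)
The plan is to prove that each of the three types of rationally connected threefolds admits a resolution that is symplectic rationally connected (SRC), i.e. carries a non-zero Gromov-Witten invariant of the form $\GWpp$. The unifying strategy is to exhibit, on a suitable resolution, a well-behaved family of very free rational curves whose moduli space is unobstructed, so that the relevant Gromov-Witten invariant can be computed directly on the honest moduli space rather than on the virtual fundamental class. Concretely, for a fibration structure $\pi: Y \to B$ (a conic bundle over a surface, or a Del Pezzo fibration over $\p$), I would build a very free curve $C$ in $Y$ as a deformation of a curve constructed by combining a section (or multisection) of $\pi$ with vertical rational curves in the fibers, attached at nodes and smoothed. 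The key is to arrange that $C$ meets two general points and that $H^1(C, f^*T_Y) = 0$, so that the moduli space $\Kbmoo{0,2}{\beta}$ is smooth of the expected dimension near $[C]$; then the invariant $\GWpp$ counts these curves with positive multiplicity and is non-zero.

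First I would treat the conic bundle case. Here $Y \to S$ is a conic bundle over a rational surface $S$, so one has a two-parameter family of conics filling up $Y$. The plan is to take a very free rational curve $\sigma$ in $S$ through two general points (which exists since $S$ is rationally connected), lift it to a multisection or section of the conic bundle, and attach vertical conics to make the resulting comb or chain pass through two prescribed general points of $Y$ while keeping it very free. Using the standard smoothing-combs technique of Graber–Harris–Starr together with the vanishing of $H^1$ for the normal bundle, I would deduce that the moduli space of such genus-zero stable maps is generically smooth of expected dimension, reducing the Gromov-Witten invariant to an enumerative count that is manifestly positive. The Del Pezzo fibration case proceeds analogously: the general fiber is a rationally connected surface, so it contains very free curves through pairs of points, and one glues vertical curves to a section of $\pi$ over $\p$ and smooths.

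For the $Q$-Fano case, the hypothesis that the smooth locus is rationally connected is essential: I would pass to a resolution $Y \to X$ and use that the smooth locus contains very free curves avoiding the indeterminacy/exceptional locus. The idea is to produce a very free curve in $Y$ through two general points lying entirely in the preimage of the smooth locus, so that its deformation theory is again unobstructed and the SRC invariant is non-zero on $Y$. The main technical input throughout is controlling the normal bundle so that $H^1$ vanishes after smoothing; this is what guarantees the moduli space has no excess components and that the virtual count agrees with the naive count.

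The hard part will be ensuring that the glued-and-smoothed curve is genuinely very free and that its moduli space has no extra higher-dimensional components contributing negatively, which is precisely the subtlety flagged in the introduction. In the fibration cases this is manageable because the relative geometry gives explicit control of the normal bundle of the section and of the vertical curves, so that the gluing produces a curve $f: \p \to Y$ with $f^* T_Y$ sufficiently positive. The most delicate case is the $Q$-Fano resolution, where the exceptional divisors of the resolution can introduce negative contributions to the normal bundle; here I would need to choose the very free curves in the smooth locus carefully so as to keep them away from the exceptional locus and preserve positivity of $f^* T_Y$ after resolving. Once the relevant moduli space is shown to be smooth of expected dimension with the two point-constraints cutting out a non-empty finite set, the invariant $\GWpp$ is automatically non-zero, establishing that the chosen resolution is SRC in all three cases.
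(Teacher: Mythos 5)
Your proposal has a genuine gap at its logical core. The closing claim---``once the relevant moduli space is shown to be smooth of expected dimension with the two point-constraints cutting out a non-empty finite set, the invariant $\GWpp$ is automatically non-zero''---is false. The Gromov--Witten invariant is a sum of contributions from \emph{all} components of the moduli space of stable maps in the class $\beta$: reducible stable maps, multiple covers, and (crucially, on a resolution $Y$ where $-K_Y$ is no longer positive) curves lying in or meeting the exceptional divisors. Excess-dimensional components contribute through the virtual class and can cancel the positive count from your nice component; this is exactly the subtlety the introduction flags, and acknowledging it is not the same as resolving it. Note that \emph{any} smooth projective rationally connected $3$-fold carries an unobstructed family of very free curves through two general points (comb-smoothing gives curves with arbitrarily positive $f^*T_Y$), so if your argument were complete it would prove that every rationally connected $3$-fold is SRC---an open problem the paper explicitly does not claim. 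The construction of one good component is the easy part; the theorem lives or dies on excluding or taming the other components, and your proposal offers no mechanism for that.

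Concretely, the paper's work in each case is precisely this missing mechanism, and it is not ``manageable by explicit control of normal bundles.'' In the conic bundle case (Theorem~\ref{thm:conic}), after forcing curves into the surface $Z=\pi^{-1}(\Gamma)$ via the constraints, there remain stable maps whose class in $Z$ differs from $s_0+kC$ yet equals it in $X$; their contributions are shown to be non-negative only by Voisin's argument deforming $Z$ to a blow-up of a Hirzebruch surface at \emph{distinct} points. In the Del Pezzo fibration case (Theorem~\ref{thm:symGHS}), the section is not an arbitrary glued-and-smoothed one: it is chosen to minimize the intersection with the discrepancy divisor $\sum a_i E_i$, then to minimize the imbalance $B$ of its normal bundle, and then a lengthy $-K_Y$-degree count over all decompositions $C\cup C_e\cup C_g$ (with separate treatment of the $\PP^1\times\PP^1$-fiber case) shows every degenerate configuration either fails the constraints or lies in the boundary of the expected-dimension component. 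In the $Q$-Fano case, avoiding the exceptional locus is not enough: you need a very free curve in the smooth locus with \emph{balanced} normal bundle $\OO(a)\oplus\OO(a)$ (Theorem~\ref{thm:balanced}, itself a substantial result relying on Shen's thesis), because only then does equality in Lemma~\ref{lem:bound} force every curve through the $a+1$ general points to stay in the smooth locus and lie in the unobstructed component (Proposition~\ref{prop:QFano}); an unbalanced very free curve leaves slack in the degree inequality, and contributions from stable maps touching the exceptional divisors cannot be excluded.
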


Thus the first two cases are completely settled. Our proof of the second and third case also works in a slightly more general context. The reader is referred to Section ~\ref{sec:delpezzo} and ~\ref{sec:QFano} for more precise statements. In general, the condition of the last case is very difficult to verify. However in this paper we note that Gorenstein $Q$-Fano $3$-folds satisfy this condition.

Finally, we mention a related question. By the theorem of Graber-Harris-Starr~\cite{GHS03}, a rationally connected fiberation over a curve always has a section. As a corollary, the total space of a rationally connected fiberation over a rationally connected variety is itself rationally connected. It would be interesting to know if a similar result holds in the symplectic category. A first step might be to analyze the case of a rationally connected fiberation over a rational curve. If a general fiber is $\PP^1$, then it is true (c.f. Proposition~\ref{prop:surface}). This is already quite difficult to analyze when the fiber dimension is $2$. The second part of Theorem~\ref{thm:SRC} is also an attempt to analyzed a special case: a fiberation in Del Pezzo surfaces coming from a contraction. We prove that there is a section on the resolution which gives the non-zero Gromov-Witten invariant with two insertions being the class of a point. Understanding this symplectic version of Graber-Harris-Starr theorem may help solving the question of symplectic rationally connectedness since in many cases, the minimal model program produces birational models as a fiberation. One may try to compare the Gromov-Witten invariants on the total space and those of a fiberation over a curve in the base(c.f. the proof of Theorem~\ref{thm:conic}).

\textbf{Acknowledgments:} The author would like to thank Mingmin Shen for showing him his Ph.D. thesis, Jason Starr for inspirations and encouragement, Aleksey Zinger for helping him understand Gromov-Witten invariants and the degeneration formula .

\section{Symplectic invariance for rationally connected $3$-folds}

\subsection{Descendant GW-invariants, Relative GW-invariants, and Degeneration formula}
In this section we recall some variants of Gromov-Witten invariants.

\begin{defn}
Let $\mcL_i$ be the line bundle on the moduli stack $\overline{\mathcal{M}}_{0, n}^{X, \beta}$ whose fiber over each point $(C, p_1, \ldots, p_n)$ is the restriction of the cotangent line bundle of $C$ to $p_i$. Let $\psi_i$ be the first Chern class of $\mcL_i$. Then the descendant Gromov-Witten invariant is defined as 
\[
\langle \tau_{k_1}A_1, \ldots, \tau_{k_m}A_m \rangle^{X}_{0, \beta}=\int_{[\overline{\mathcal{M}}_{0, n}^{X, \beta}]^{\text{virt}}} \Pi_i \psi_i^{k_i} ev_i^*A_i,
\]
where $ev_i$ is the evaluation map given by the $i$-th marked point, $A_i \in H^*(X, \QQ)$.
\end{defn}

The relative Gromov-Witten invariants are first introduced in the symplectic category by Li-Ruan~\cite{LiRuan} and then in the algebraic category by Jun Li~\cite{LiJun1},~\cite{LiJun2}. We will not recall the precise definition here since it is not needed. The reader should refer to the above-mentioned papers for more details.

Intuitively, the relative Gromov-Witten invariants count the number of stable maps meeting certain constraints and having prescribed tangency condition with a given divisor. Let $X$ be a smooth projective variety and $D \subset X$ be a smooth divisor. Fix a curve class $\beta$ such that $D \cdot \beta =n \geq 0$ (relative Gromov-Witten invariants are not defined if $D \cdot \beta < 0$). Also choose a partition $\{m_i\}$ of $n$. Then the relative Gromov-Witten invariants count the number of stable maps $f: (C, p_1, \ldots, p_r, q_1, \ldots, q_r) \rightarrow X$ with $r+s$ marked points such that the first $r$ points (absolute marked points) are mapped to cycles in $X$ and the last $s$ points (relative marked points) are mapped to some cycles in $D$ with $f^*D=\sum m_i q_i$. We can also define descendant relative Gromov-Witten invariants. We write such invariants as 
\[
\langle \tau_{k_1}A_1, \ldots, \tau_{k_r}A_r \vert (m_1, B_1), \ldots, (m_s, B_s)\rangle^{(X, D)}_{0, \beta}
\] 
where $A_i \in H^*(X, \QQ), B_j \in H^*(D, \QQ)$. We also use the abbreviation 
\[
\langle \Gamma \{(d_i, A_i)\} \vert {\mcT}_k \rangle^{X, D}_{\beta}
\]
 following ~\cite{HLR}. In the degeneration formula, we have to consider stable maps from disconnected domains. The corresponding relative invariants are defined to be the product of those of stable maps from connected domains. Such invariants are denoted by 
\[
\langle \Gamma^\bullet \{(d_i, A_i)\} \vert \mathcal{T}_k \rangle^{X, D}_{\beta}.
\]
Finally we note that we can represent these invariants by decorated dual graphs (c.f. Sec. 3.2 in~\cite{HLR}).

Now we describe the degeneration formula. Let $W \rightarrow S$ be a projective morphism from a smooth variety to a pointed curve $(S, 0)$ such that a general fiber is smooth and connected and the fiber over $0$ is the union of two smooth irreducible varieties ($W^+, W^-$) intersecting transversely at a smooth subvariety $Z$. Let $A_i$ be cohomology classes in a general fiber. Assume that the specialization of $A_i$ in $W_0$ can be written as $A_i(0)=A_i^++A_i^-$, where $A_i^+ \in H^*(W^+, \QQ)$ and $A_i^- \in H^*(W^-, \QQ)$. Let $\{\beta_i\}$ be a self-dual basis of $Z$. Also let $\mcT_k=\{(t_j, \beta_{a_j})\}$ be a weighted partition and $\check{\mcT}_k=\{(t_j, \check{\beta}_{a_j})$ be the dual partition, i.e. $\check{\beta}_{a_j}$ is the dual of $\beta_{a_j}$. Then the degeneration formula expresses the Gromov-Witten invariants of a general fiber in terms of the relative Gromov-Witten invariants of the degeneration in the following way:
\[
\langle \Pi_i \tau_{d_i} A_i \rangle^{W_t}=\sum \Delta(\mcT_k) \langle \Gamma^\bullet \{(d_i, A_i^+)\} \vert \mathcal{T}_k \rangle^{W^+, Z} \langle \Gamma^\bullet \{(d_i, A_i^-)\} \vert \check{\mathcal{T}}_k \rangle^{W^-, Z},
\]
where the summation is taken over all possible degenerations and 
\[
\Delta(\mcT_k)=\Pi_j t_j Aut(T_k).
\]

In this paper we are mainly interested in the following special case of such degenerations: the deformation to the normal cone. Namely, let $X$ be a smooth projective variety and $S \subset X$ be a smooth subvariety. Then we take $W$ to be the blow up of $X \times A^1$ with blow up center $S \times 0$. In this case, $W^- \cong \tx$, the blow up of $X$ along $S$, and $W^+\cong \PP_S(\OO \oplus N_{S/X})$.

\subsection{A partial ordering}
Let $X$ be a smooth projective $3$-fold and $S\subset X$ be a smooth subvariety of codimension $k$. Denote by $\tilde{X}$ the blow up of $X$ along $S$ and by $E$ the exceptional divisor. Here we allow $S$ to be a codimension $1$ subvariety, i.e. a divisor, and in this case $\tilde{X} \cong X, E \cong S$.

Let $\theta_1=1, \theta_2, \ldots, \theta_{m_S}=\omega \in H^*(S, \QQ)$ a self dual basis of $S$, where $1$ (resp. $\omega$) is the generator of the degree $0$ (resp. $2(3-k)$) cohomology. We now describe a self dual basis of $E$, where $E=\PP_S(N_{S/X})$ is a $\PP^{k-1}$ bundle over $S$. Let $[E]$ be the first Chern class of the relative $\OO(1)$ bundle over $\PP_S(N_{S/X})$. If $k=2$, i.e. $S$ is a smooth curve in $X$, then $\pi_S: E \rightarrow S$ is a ruled surface over $S$. We have $[E] \cdot [E]=d$ on $E$. In this case, take $\lambda=[E]-\frac{d}{2}\pi_S^*\omega$. Otherwise just take $\lambda$ to be $[E]$. Then the cohomology classes
\[
\pi_S^* \theta_i \cup \lambda^j, ~1 \leq i \leq m_S, ~0 \leq j \leq k-1
\]
form a self dual basis of $E$. Denote them by $\Theta=\{\delta_i \}$.

\begin{rem}
In the paper~\cite{HLR}, the authors claim $\pi_S^* \theta_i \cup [E]^j$ to be self dual, which is not true if $N_{S/X}$ is not a trivial bundle over $S$. However, this has been fixed and the proof is essentially the same since only the degree of the $[E]$ part is important in the proof. The authors take a different self dual basis.\footnote{Private communication.}
\end{rem}

\begin{defn}
A standard (relative) weighted partition $\mu$ is a partition
\[
\mu=\{(\mu_1, \delta_{d_1}), \ldots, (\mu_{l(\mu)}, \delta_{d_{l(\mu)}}\},
\]
where $\mu_i$ and $d_i$ are positive integers with $d_i \leq k m_S$. $l(\mu)$ is called the length of the partition.
\end{defn}

\begin{defn}
For $\delta=\pi_S^* \theta \cup \lambda^j \in H^*(E, \QQ)$, define
\[
\deg_S(\delta)=\deg \theta, deg_f(\delta)=2j.
\]

For a standard weighted partition $\mu$, define
\[
\deg_S(\mu)=\sum_{i=1}^{l(\mu)} \deg_S \delta_{d_i}, 
\]
\[
\deg_f(\mu)=\sum_{i=1}^{l(\mu)}\deg_f(\delta_{d_i}).
\] 
\end{defn}

\begin{defn}
We define a partial ordering on the set of pairs $(m, \delta)$ where $m \in  \ZZ_{>0}$ and $\delta \in H^*(E, \QQ)$ as follows:
\[
(m, \delta) > (m', \delta')
\]
if
\begin{enumerate}
\item $m>m'$, or
\item $m=m'$ and $\deg_S(\delta) > \deg_S(\delta')$, or
\item equality in the above and $\deg_f(\delta) > \deg_f(\delta')$.
\end{enumerate}

\end{defn}

\begin{defn}
A lexicographic ordering on weighted partitions is defined as following:
\[
\mu > \mu'
\]
if after we place the pairs of $\mu$ and $\mu'$ in decreasing order, the first pair for which $\mu$ and $\mu'$ are not equal is larger for $\mu$.
\end{defn}

Let $\sigma_1, \ldots, \sigma_{m_X}$ be a set of basis of $H^*(X, \QQ)$. Then the set of cohomology classes
\[
\gamma_j=\pi^* \sigma_j, 1 \leq j \leq m_X,
\]
\[
\gamma_{j+m_X}=\iota_*(\delta_j), 1 \leq j \leq k m_S
\]
is a set of $\QQ$-basis of $\tilde{X}$, where $\iota : E \rightarrow \tilde{X}$ is the inclusion and $\iota_*$ is the induced Gysin map.

\begin{defn}
A connected standard relative Gromov-Witten invariant of $(\tilde{X}, E)$ is of the form
\[
\langle \omega \vert \mu \rangle^{\tilde{X}, E}_{0, A}=\langle \tau_{k_i}\gamma_{L_1}, \ldots, \tau_{k_n}\gamma_{L_n} \vert \mu \rangle^{\tilde{X}, E}_{0, A},
\]
where $A$ is an effective curve class, $\mu$ is a standard weighted partition with $\sum \mu_j = E \cdot A$, $\gamma_{L_i}=\pi^* \sigma_{L_i}$.
\end{defn}

We write $\Gamma(\omega) \vert \mu$ for the graph of such invariants. Here is the partial ordering on such graphs.

\begin{defn}
\[
\Gamma(\omega) \vert \mu <\Gamma(\omega') \vert \mu'
\]
if
\begin{enumerate}
\item $\pi_*(A) < \pi_*(A')$, i.e. the difference $\pi_*(A')-\pi_*(A)$ is an effective curve class in $X$.
\item equality in (1) and the arithmetic genus satisfies $g' < g$,
\item equality in (1) and (2) and $\Vert \omega' \Vert < \Vert \omega\Vert$,
\item equality in (1)-(3) and $\deg_S(\mu') >\deg_S( \mu)$,
\item equality in (1)-(4) and $\mu' > \mu$.
\end{enumerate}
where $\Vert \omega \Vert$ is the number of insertions of $\omega$.
\end{defn}

We have the following observation:
\begin{lem}
Given a standard relative invariants, there are only finitely many standard relative invariants smaller than it in the partial ordering defined above.
\end{lem}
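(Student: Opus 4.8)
The plan is to bound, one datum at a time, all the discrete data entering a standard relative invariant $\Gamma(\omega)\vert\mu$, exploiting that the ordering is lexicographic in the five criteria (1)--(5). The data are the effective curve class $A\in N_1(\tx)$, the weighted partition $\mu$ of $E\cdot A$, the arithmetic genus $g$, and the absolute insertions $\omega=(\tau_{k_1}\gamma_{L_1},\ldots,\tau_{k_n}\gamma_{L_n})$. Throughout I use that these symbols denote genuine relative Gromov--Witten invariants, so that the codimension of the insertions equals the virtual dimension of the corresponding moduli space; this numerical matching is what ultimately bounds the insertion data once the curve class is under control. The heart of the argument is to show that only finitely many curve classes $A$ can occur below a fixed $\Gamma_0$; everything else follows from this together with the dimension constraint.

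First I would bound the curve class. Fix $\Gamma_0$ with class $A_0$ and write $\beta_0=\pi_*(A_0)$. If $\Gamma(\omega)\vert\mu<\Gamma_0$ then criterion (1) forces $\pi_*(A)$ to be an effective class $\beta$ with $\beta_0-\beta$ effective or zero. Since the Mori cone $\overline{NE}(X)$ is strictly convex (an ample class is positive on $\overline{NE}(X)\setminus\{0\}$), the set of such $\beta$ lies in a compact slice of a pointed cone and hence contains only finitely many lattice points; thus $\pi_*(A)$ takes finitely many values. Now recall that for the blow-up $\pi:\tx\to X$ along $S$ the map $\pi_*:N_1(\tx)\to N_1(X)$ is surjective with one-dimensional kernel generated by the class $\ell$ of a line in a fibre of $E=\PP_S(N_{S/X})\to S$, and that $E\cdot\ell=-1$. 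Hence for each fixed $\beta$ the classes with $\pi_*(A)=\beta$ form a single coset $\hat\beta+\ZZ\ell$. Along this coset, effectivity of $A$ bounds the coefficient of $\ell$ from one side (the line meets the pointed cone $\overline{NE}(\tx)$ in a ray or a segment), while the requirement $E\cdot A\ge 0$, under which relative invariants are defined, bounds it from the other side because $E\cdot\ell=-1$. Therefore only finitely many $A$ occur. (In the degenerate codimension-one case $\tx\cong X$, $E\cong S$, the map $\pi_*$ is the identity and criterion (1) bounds $A$ directly.)

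With $A$ confined to a finite set, the remaining data fall into line. Since $E\cdot A=\sum_j\mu_j$ is now bounded, and each part carries a label $\delta_{d_i}$ from the fixed finite self-dual basis $\Theta$ with $d_i\le k\,m_S$, there are only finitely many admissible partitions $\mu$; this disposes of criteria (4) and (5). The arithmetic genus $g$ is likewise bounded, since once the class (hence the energy) is fixed only finitely many dual graphs occur, so criterion (2) contributes finitely many possibilities. Finally, the dimension constraint $\sum_i\big(k_i+\operatorname{codim}\gamma_{L_i}\big)+(\text{relative codimensions})=\operatorname{vdim}$ bounds the insertions: the right-hand side is determined by $-K_{\tx}\cdot A$, by $E\cdot A$ and $\ell(\mu)$, and by $g$, all of which are now bounded, so the number $n=\Vert\omega\Vert$ of nontrivial insertions and each descendant order $k_i$ are bounded, while the labels $L_i$ run over the fixed finite basis $\{\sigma_j\}$ of $H^*(X,\QQ)$. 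Hence $\omega$ ranges over a finite set, controlling criterion (3). Assembling the finitely many choices of $A$, $\mu$, $g$ and $\omega$ yields finitely many invariants below $\Gamma_0$.

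The main obstacle is the curve-class step: one must combine the strict convexity of the Mori cones of $X$ and $\tx$ with the blow-up relation $E\cdot\ell=-1$ and the sign condition $E\cdot A\ge 0$ to squeeze the coefficient of $\ell$ into a finite range. A secondary point worth emphasising is that the ordering by itself does \emph{not} bound $g$ or $\Vert\omega\Vert$ when $\pi_*(A)$ is strictly smaller than $\beta_0$, since criteria (2) and (3) are then vacuous; it is precisely the virtual-dimension matching, i.e.\ the fact that we are dealing with actual Gromov--Witten numbers, that supplies these bounds.
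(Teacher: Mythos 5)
Your proof is correct and follows essentially the same route as the paper's: finitely many effective classes in $X$ lie below $\pi_*(A_0)$, and for each such class the fibre of $\pi_*$ consists of classes differing by multiples of the line/ruling class $L$, whose coefficient is bounded below by effectivity and above by the condition $E\cdot A\ge 0$ together with $E\cdot L=-1$. You additionally spell out the finiteness of the partition, genus, and insertion data (via the virtual-dimension constraint), which the paper's proof leaves implicit.
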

\begin{proof}
For a curve class $[A]$ of $\tilde{X}$, there are only finitely many curve classes lower than it. Thus only finitely many relative invariants lower than the given invariants. In our definition, we are comparing the curve classes by their image in $X$. It is easy to see that two different curve classes in $\tilde{X}$ defines the same curve class in $X$ if and only if the difference is a multiple of $L$, where $L$ is a line or a ruling in the exceptional divisor. So we have to consider all the possible standard relative invariants associated to $A+kL$. But $k$ is bounded below since $A+kL$ has to be an effective curve class. It is also bounded above since $E \cdot (A+kL) \geq 0$ and $E \cdot L=-1$.

\end{proof}

\subsection{From relative to absolute}

In this subsection, we discuss how to associate an absolute invariant of $X$ to a relative invariant of $\tilde{X}$.
\begin{defn}
For a relative insertion $(m, \delta)$ with $\delta=\pi_S^*\theta_i \cup \lambda^j$, we associate to it the absolute insertion $\tau_{d(m, \delta)}(\tilde{\delta})$, where
\[
\tilde{\delta}=\iota_*(\theta_i),
\]
\[
d(m, \delta)=km-k+j.
\]
Given a weighted partition $\mu=\{(\mu_i, \delta_{k_i})\}$, we define 
\[
d_i(\mu)=d(\mu_i, \delta_{k_i})=k \mu_i-k+\frac{1}{2}\deg_f(\delta_{k_i}),
\]
\[
\tilde{\mu}=\{\tau_{d_1(\mu)}(\tilde{\delta}_{k_1}), \ldots, \tau_{d_{l(\mu)}(\mu)}(\tilde{\delta}_{k_{l(\mu)}})\}.
\]
Given a standard relative invariant $\langle \Gamma^\bullet(\omega) \vert \mu\rangle^{\tilde{X}, E}$, we define the absolute descendant invariant associated to the relative invariant to be 
\[
\langle \tilde{\Gamma}^\bullet(\omega, \tilde{\mu})\rangle^X
\]
Here all the insertions $\omega$ in the relative invariants are of the form $\pi^* \sigma_i$. And the corresponding insertions in the absolute invariants are just $\sigma_i$.
\end{defn}

\begin{defn}
An absolute descendant invariant of $X$ is called a \emph{colored absolute descendant invariant relative to $S$} if it can be written in the form $\langle \Gamma(\omega, \tilde{\mu})\rangle$ such that each insertion in $\omega$ is of the form $\tau_{d_i}\sigma_i$ and each insertion in $\tilde{\mu}$ is of the form $\tau_d \tilde{\delta}_k$.
\end{defn}

\begin{defn}
If $k=1$, then a colored absolute descendant invariant of $X$ relative to $S$ is called \emph{admissible} if $\sum \mu_j = E \cdot A$.
\end{defn}

The following lemma is essentially Lemma 5.14 in ~\cite{HLR}. Note that in their paper they only consider the case of primary Gromov-Witten invariants. But the proof is actually the same.
\begin{lem}\label{lem:bijection}
If $\mu \neq \mu'$, then $\tilde{\mu} \neq \tilde{\mu'}$. Therefore there is a natural bijection between the set of colored weighted absolute graphs relative to $S$ and the set of weighted relative graphs in $\tilde{X}$ relative to $E$ if $k>1$. The same is true if we restrict to the admissible ones if $k=1$.
\end{lem}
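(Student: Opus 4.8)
The plan is to prove this as a purely combinatorial bijection, reducing the statement to the injectivity claim ``$\mu \neq \mu' \Rightarrow \tilde{\mu} \neq \tilde{\mu'}$'' and then a dimension/counting argument. The construction $\mu \mapsto \tilde{\mu}$ takes a weighted partition $\mu = \{(\mu_i, \delta_{k_i})\}$ and produces the collection of descendant insertions $\tau_{d_i(\mu)}(\tilde{\delta}_{k_i})$, where $d_i(\mu) = k\mu_i - k + \frac{1}{2}\deg_f(\delta_{k_i})$ and $\tilde{\delta}_{k_i} = \iota_*(\theta_i)$. The essential point is that this map forgets some information (the multiplicity $\mu_i$ and the fiber degree $\deg_f$ are bundled together into the single descendant order $d_i$, and the cohomology class is pushed forward to $\iota_*(\theta_i)$), so one must check that enough is retained to recover $\mu$.

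First I would fix an individual pair $(m, \delta)$ with $\delta = \pi_S^*\theta_i \cup \lambda^j$ and show that the associated absolute insertion $\tau_{km - k + j}(\iota_*(\theta_i))$ determines $(m, \delta)$ uniquely. The cohomology class $\iota_*(\theta_i)$ recovers $\theta_i$, hence the index $i$ and $\deg_S(\delta) = \deg\theta_i$. This is where the case $k > 1$ matters: since $0 \leq j \leq k-1$, the descendant order $d = km - k + j$ can be read modulo $k$ to recover $j = d \bmod k$ (because $0 \leq j < k$ is forced to be the residue), and then $m = (d - j)/k + 1$ is determined as well. Thus $(m,\delta) \mapsto \tau_d(\iota_*\theta_i)$ is injective on pairs when $k > 1$. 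When $k = 1$ the fiber degree $j$ is always $0$ and $d = m - 1$ simply records $m$ directly, so the admissibility condition $\sum \mu_j = E \cdot A$ is what pins down the partition among those sharing the same total; I would invoke this constraint to rule out the degenerate collisions that the unrestricted map would otherwise allow.

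Next I would upgrade this pairwise injectivity to the level of partitions. Having shown each pair is recoverable, the map on partitions reorders the recovered pairs $(\mu_i, \delta_{k_i})$ back into a standard weighted partition; since $\tilde{\mu}$ is the multiset $\{\tau_{d_i(\mu)}(\tilde{\delta}_{k_i})\}$ and each summand determines its preimage pair, two distinct partitions $\mu \neq \mu'$ (as multisets of pairs) must yield distinct multisets $\tilde{\mu} \neq \tilde{\mu'}$. The lexicographic ordering introduced earlier guarantees a canonical decreasing arrangement, so ``distinct as multisets'' is unambiguous. This gives the injectivity half of the bijection, and surjectivity onto colored absolute graphs relative to $S$ is immediate by definition, since every colored invariant is by construction of the form $\langle \Gamma(\omega, \tilde{\mu})\rangle$ with insertions of exactly the prescribed shapes $\tau_d\tilde{\delta}_k$.

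\textbf{The main obstacle} will be handling the $k = 1$ case cleanly: here the map on pairs alone is genuinely not injective (different $m$ with trivial fiber part can only be separated by the total, not individually), so the argument has to route entirely through the admissibility hypothesis $\sum \mu_j = E \cdot A$, and I would need to verify that admissibility together with the recovered data $\{(d_i, \tilde{\delta}_{k_i})\}$ reconstructs the full partition without ambiguity. I expect this to require checking that the curve class $A$ and the divisor intersection $E \cdot A$ are themselves determined by the absolute graph, so that the constraint is available on the absolute side. Since the underlying computation is exactly Lemma 5.14 of~\cite{HLR} extended from primary to descendant insertions — and, as the authors note, the descendant orders only enter through the bookkeeping already present — I would cite that lemma and remark that the passage to descendants changes nothing essential, filling in only the modular arithmetic that separates $\deg_f$ from the multiplicity.
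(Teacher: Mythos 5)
Your core computation is right, and it is exactly the content behind the paper's proof (the paper simply cites Lemma 5.14 of \cite{HLR} and notes that the descendant case is identical): from a colored insertion $\tau_d(\iota_*\theta_i)$ one reads off $\theta_i$, then $j = d \bmod k$ and $m = (d-j)/k + 1$, using $0 \leq j \leq k-1$; this recovers each pair $(m,\delta)$ individually, hence recovers the partition as a multiset, and injectivity of $\mu \mapsto \tilde{\mu}$ follows.

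However, your handling of the $k=1$ case is genuinely wrong, and it matters because that case is explicitly part of the statement. You claim that for $k=1$ ``the map on pairs alone is genuinely not injective'' and that admissibility is what restores injectivity; this is false and contradicts your own earlier sentence. For $k=1$ one has $j=0$ and $d = m-1$, so $m = d+1$ is recovered from each insertion separately: the pairwise map, and hence $\mu \mapsto \tilde{\mu}$, is injective for \emph{every} $k \geq 1$ by the same arithmetic. The true role of admissibility is on the surjectivity side, which you dismiss as ``immediate by definition.'' It is not: a standard relative graph must satisfy the tangency constraint $\sum \mu_j = E \cdot A$, so one must check that every colored absolute graph is actually hit. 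For $k>1$ this works because the kernel of $\pi_*$ on curve classes is spanned by the class $L$ (a line or ruling in $E$) with $E \cdot L = -1$: given the absolute class $\beta'$ and the partition $\mu$ recovered from $\tilde{\mu}$, there is a unique lift $A$ of $\beta'$ with $E \cdot A = \sum \mu_j$, so the constraint can always be arranged. For $k=1$, $\pi$ is the identity, the curve class cannot be adjusted, and $\sum \mu_j = E \cdot A$ becomes a genuine condition on the absolute side --- this is precisely the admissibility hypothesis, and it is why the lemma restricts to admissible graphs in that case. So the correct division of labor is: injectivity is uniform in $k$; surjectivity requires the curve-class lifting argument for $k>1$ and the admissibility restriction for $k=1$. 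Your write-up inverts this, so as it stands the part of the lemma concerning $k=1$ (and the bijectivity claim itself) is not proved.
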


\begin{rem}
Notice that different relative invariants may give the same absolute invariants. But these absolute invariants are different as colored absolute invariants.
\end{rem}

Finally, let $C$ be a curve in $\tx$ such that $C$ does not intersect $E$. Then $C$ gives a curve in $X$, also denoted by $C$. Let $I$ be the partially ordered set of standard weighted relative graph $\Gamma^\bullet ([C], \omega )\vert \mu$. We can form an infinite dimensional vector space $\RR^I_{\tilde{X}, E}$ whose coordinates are ordered in the same way as the partial ordering in $I$. A standard weighted relative invariant $\langle \Gamma^\bullet([C], \omega) \vert \mu \rangle^{\tilde{X}, E}$ gives rise to a vector in $\RR^I_{\tilde{X}, E}$. By Lemma~\ref{lem:bijection}, $I$ is also the set of colored standard weighted absolute graphs relative to $S$. Thus we also have an infinite dimensional vector space $\RR^I_{X, S}$. Similarly, an absolute invariant $\langle \Gamma^\bullet([C], \omega, \tilde{\mu})\rangle^X$ gives a vector in this vector space.

\subsection{The Correspondence}
In this section we sketch the proof of the following theorem.
\begin{thm}\label{thm:Correspondence}
Let $\pi: \tilde{X} \rightarrow X$ be the blow up of a $3$-fold along a smooth center $S$. Then the map
\[
A: \RR^I_{\tilde{X}, E} \rightarrow \RR^I_{X, S}
\]
which associate to a standard relative invariant a colored absolute invariant is an invertible lower triangular linear map.
\end{thm}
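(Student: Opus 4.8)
The plan is to prove the statement by running the degeneration formula for the deformation to the normal cone of $S$ in $X$ and then reading off the structure of the resulting expression against the partial ordering on $I$. Recall that this degeneration has general fiber $X$ and central fiber $\tx \cup_E W^+$ with $W^+ = \PP_S(\OO \oplus N_{S/X})$; under the natural map $W \to X$ the component $\tx = W^-$ maps to $X$ by the blow-down $\pi$, while $W^+$ is contracted onto $S \subset X$. First I would fix a colored absolute invariant $\langle \tilde\Gamma^\bullet([C],\omega,\tilde\mu)\rangle^X$ indexed by some $i \in I$, with curve class $\beta_i = \pi_*(A_i)$, and apply the degeneration formula. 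This writes it as a finite sum of products of a relative invariant of $(\tx,E)$ with a relative invariant of $(W^+,E)$, summed over splittings $\beta_i = \beta_- + \beta_+$ of the curve class and over distributions of the insertions matched along a weighted partition $\mcT$ at $E$. Since $C$ misses $E$ it is forced onto the $\tx$-side, which, together with the pulled-back classes $\pi^*\sigma_i$, fixes the combinatorial shape of the $\tx$-side data and identifies it with a relative graph indexed by some $j \in I$.

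The core of the argument is to compare each such $j$ with $i$. Because the total class is conserved and $W^+ \to S$ contracts fibers, pushing to $X$ gives $\pi_*(\beta_-) = \beta_i - (\text{image of }\beta_+\text{ in }X)$, and the image of the effective class $\beta_+$ is again effective; hence $\pi_*(\beta_-) \le \beta_i$, with equality precisely when $\beta_+$ is a purely vertical (fiber) class. Whenever the inequality is strict the term is strictly smaller by criterion (1). In the equality case I would run through the finer criteria: tracking the arithmetic genus across the nodes created by gluing at the relative markings controls (2), the forced placement of the $\omega$-insertions controls (3), and a virtual-dimension count combined with the degree shift $d(m,\delta)=km-k+j$ controls the $\deg_S$ comparison (4) and the lexicographic refinement (5). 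The upshot I am aiming for is
\[
\langle \tilde\Gamma^\bullet([C],\omega,\tilde\mu)\rangle^X = c_i\,\langle \Gamma^\bullet([C],\omega)\mid \mu\rangle^{\tx,E} + \sum_{j<i} c_{ij}\,\langle \Gamma^\bullet_j\rangle^{\tx,E},
\]
which exhibits $A$ as lower triangular.

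It then remains to show the diagonal coefficient $c_i$ is nonzero and to deduce invertibility. The diagonal term is the one in which the $W^+$-side carries only the minimal vertical class realizing the tangency $\mu$: it is a disjoint union of fibers of $W^+ \to S$, the $j$-th a degree-$\mu_j$ multiple cover meeting $E$ at a single point to the prescribed order and carrying the insertion attached to $\tilde\delta_{d_j}$. These are fiber (rubber) invariants of the projective bundle $W^+$, and the degree shift $d(m,\delta)=km-k+j$ is arranged exactly so that the descendant $\tau_{d(m,\delta)}$ on the $\tx$-side absorbs their dimension; invoking the explicit nonvanishing of such multiple-cover invariants (as in~\cite{HLR}) gives $c_i \ne 0$. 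Invertibility of the resulting lower-triangular map then follows formally from the earlier lemma that each invariant has only finitely many invariants below it, so the system can be solved recursively, coordinate by coordinate, with no convergence issue.

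I expect the main obstacle to be the fine triangularity in criteria (4)–(5) together with the nonvanishing of $c_i$. Both hinge on careful bookkeeping of the virtual dimension, on the splitting of the $H^*(E)$-insertions recorded by $\deg_S$ and $\deg_f$, and on the explicit evaluation of the fiber invariants of $W^+$; this is precisely where the exact form of the degree shift $d(m,\delta)$ and the corrected self-dual basis $\Theta$ (rather than the $\pi_S^*\theta_i \cup [E]^j$ of the original reference) become essential.
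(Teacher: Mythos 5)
Your proposal follows essentially the same route as the paper's proof: both apply the degeneration formula to the deformation to the normal cone of $S$, specialize $[C]$ and the pulled-back classes $\pi^*\sigma_i$ to the $\tx$-side and the classes supported on $S$ to the $\PP_S(\OO\oplus N_{S/X})$-side, establish lower triangularity with respect to the partial ordering, and get the nonvanishing diagonal from the fiber multiple-cover relative invariants of the projective bundle computed in~\cite{HLR} by localization, with invertibility following from the finiteness lemma. The paper merely defers the finer ordering comparisons and the diagonal evaluation to Theorem 5.15 of~\cite{HLR}, which is exactly what your sketch reproduces.
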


\begin{proof}
We take $W$ to be the blow up of $X \times A^1$ along the smooth subvariety $S\times 0$. Then $W^-\cong \tilde{X}, W^+ \cong \PP_S(\OO\oplus N_{S/X})$ and they intersect transversely at $E$. We will apply the degeneration formula to it. 

We start with a connected standard weighted relative invariant 
\[
\langle \Gamma([C], \omega) \vert \mu\rangle^{(\tilde{X}, E)}_{0, \beta}
\]
 with vertex decorated by $(0, \beta)$. Then the associated absolute invariant is 
\[
\langle \Gamma([C], \omega, \tilde{\mu}\rangle^{X}_{0, \pi_*(\beta)}.
\]
 In order to apply the degeneration formula, we have to specify the specialization of the cohomology classes. Since $C$ does not intersect $E$, we may specialize $C$ to lie entirely in $W^-$, i.e. we set $[C]^-=[C]$ and $[C]^+=0$. The Poincar{\'e} dual of the cohomology classes in $\tilde{\mu}$ are supported in $S$. So we specialize these cohomology classes to the $W^+$ side and set the cohomolgy in $W^-$ side to be zero. Finally, classes in $\omega$ are of the form $\sigma_i$. Therefore we set $\sigma_i^-=\gamma_i=\pi^* \sigma_i$ with appropriate classes $\sigma^+$ in $W^+$ side.

Then the degeneration formula in this case gives:
\begin{align*}
&\langle [C], \omega, \tilde{\mu} \rangle^{X}_{\pi_*(\beta)}\\
=&\sum \langle \Gamma^\bullet_-([C], \omega_1) \vert \eta \rangle^{(\tilde{X}, E)} \Delta(\eta) \langle \Gamma^\bullet_+(\omega_2, \tilde{\mu})\vert \check{\eta}\rangle^{\PP_S(\OO \oplus N_{S/X}), E}.
\end{align*}

We need to show that $\langle \Gamma([C], \omega) \vert \mu \rangle$ is the largest (non-zero) term in the right hand side. Note that we have
\[
\langle \Gamma^\bullet_-([C], \omega) \vert \mu \rangle^{(\tilde{X}, E)} \Delta(\eta) \langle  \Gamma_+^\bullet(\tilde{\mu})\vert \check{\mu}\rangle^{\PP_S(\OO \oplus N_{S/X}), E}
\]
on the right hand side. First we will show the coefficient $\langle \Gamma^\bullet_+(\tilde{\mu})\vert \check{\mu}\rangle^{\PP_S(\OO \oplus N_{S/X}), E}$ is non-zero. This is basically step II in the proof of Theorem 5.15 in ~\cite{HLR}. One can check that under our choice of the self dual basis, the coefficient is the product of relative invariants
\[
\langle \tau_{nd-1-j}[pt] \vert H^j \rangle^{\PP^k, \PP^{k-1}}_{0, dL}
\]
with $j=\deg_f(\delta_{k_i})$ and $H$ is the hyperplane class. These type of invariants are computed in ~\cite{HLR} via virtual localization and are shown to be non-zero. So the diagonal of the linear map $A_S$ is non-zero.

We remark that this is the only step in ~\cite{HLR} where the form of self dual basis is important since we need to know the diagonal is non-zero. For the rest part, the proof proceeds exactly as the proof of Theorem 5.15 in ~\cite{HLR} since only the property of being self dual is needed.

\end{proof}

\subsection{Birational invariance}
In this subsection we prove the following theorem.

\begin{thm}\label{thm:invariance}
Let $\pi: \tilde{X} \rightarrow X$ be the blow up of a smooth projective $3$-fold along a smooth subvariety $S$. Also let $C$ be a curve in $\tx$ which does not intersect the exceptional divisor $E$. Then there is a non-zero descendant Gromov-Witten invariant on $\tx$ of the form 
\[
\langle [C], \tau_{d_1}A_1, \ldots, \tau_{d_n} A_n \rangle^{\tx}_{0, \beta}
\]
if and only if there is a non-zero descendant Gromov-Witten invariant on $X$ of the form 
\[
\langle [C], \tau_{d_1}A_1, \ldots, \tau_{d_m} A_m \rangle^{X}_{0, \beta'}.
\]
Here we use $C$ to denote both the curve on $\tx$ and its image in $X$.
\end{thm}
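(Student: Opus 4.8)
The plan is to deduce Theorem~\ref{thm:invariance} directly from the Correspondence Theorem~\ref{thm:Correspondence} together with the finiteness Lemma on the partial ordering. The key observation is that both an absolute invariant $\langle [C], \tau_{d_1}A_1, \ldots \rangle^{X}_{0,\beta'}$ involving the curve class $[C]$ (which, as noted, lives away from $S$ and descends from $\tx$) and a relative invariant $\langle \Gamma^\bullet([C], \omega)\vert\mu\rangle^{\tx, E}$ are organized by the \emph{same} index set $I$, so that each gives a vector in $\RR^I_{X, S}$ and $\RR^I_{\tx, E}$ respectively, and the map $A$ of Theorem~\ref{thm:Correspondence} relates the two. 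First I would reduce the statement about \emph{absolute} invariants of $\tx$ to a statement about \emph{relative} invariants of $(\tx, E)$: a descendant invariant $\langle [C], \tau_{d_1}A_1, \ldots, \tau_{d_n}A_n\rangle^{\tx}_{0,\beta}$ on $\tx$ can be expressed via the degeneration-to-the-normal-cone formula (applied to the same $W$, now reading off invariants of $W^- \cong \tx$) in terms of relative invariants of $(\tx, E)$; conversely, by a triangularity argument identical in spirit to the proof of Theorem~\ref{thm:Correspondence}, some relative invariant of $(\tx, E)$ of the form $\langle\Gamma^\bullet([C],\omega)\vert\mu\rangle^{\tx, E}$ is nonzero whenever an absolute one on $\tx$ is, and vice versa.

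Granting that reduction, the heart of the argument is to run the invertible lower-triangular map $A$ in both directions. Suppose there is a nonzero descendant invariant $\langle[C],\tau_{d_1}A_1,\ldots\rangle^{X}_{0,\beta'}$ on $X$. After decomposing each $A_i$ into its $S$-supported and transverse parts, this absolute invariant is (a finite combination of) colored absolute invariants relative to $S$, i.e.\ a nonzero vector $v \in \RR^I_{X, S}$. Since $A$ is invertible, $A^{-1}v$ is a nonzero vector in $\RR^I_{\tx, E}$; being nonzero, it has some nonzero coordinate, which is a nonzero relative invariant of $(\tx, E)$, and hence by the reduction above produces a nonzero absolute descendant invariant on $\tx$ with the $[C]$ insertion. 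The converse direction is the same with the roles of $\RR^I_{\tx, E}$ and $\RR^I_{X, S}$ exchanged, now using that $A$ (being lower triangular with nonzero diagonal, as established via the nonvanishing of the local model invariants $\langle\tau_{nd-1-j}[pt]\vert H^j\rangle^{\PP^k, \PP^{k-1}}_{0, dL}$) carries nonzero vectors to nonzero vectors. The finiteness Lemma guarantees that, restricted to the relevant range of curve classes, each column of $A$ has only finitely many entries, so the triangularity and invertibility are genuinely meaningful and $A^{-1}$ is well defined on the vectors that arise.

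The main obstacle I anticipate is the bookkeeping in the first reduction, namely passing between honest descendant invariants on $\tx$ with arbitrary insertions $\tau_{d_i}A_i$ and the more rigid \emph{colored} relative (or absolute) invariants on which the map $A$ is defined. One must verify that the specialization of each class $A_i$ into $W^\pm$ is compatible with the coloring conventions: the transverse classes should specialize to the $W^- \cong \tx$ side as $\pi^*\sigma_i$ and the $S$-supported classes should feed into the weighted partition on the $E$-side, exactly as in the proof of Theorem~\ref{thm:Correspondence}. A second subtlety is that a single geometric invariant may unfold into several colored invariants (as flagged in the Remark following Lemma~\ref{lem:bijection}); one must ensure that nonvanishing is preserved, which follows because the triangular structure prevents cancellation between the leading (diagonal) term and strictly smaller terms in the partial ordering. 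Once these compatibility checks are in place, invertibility of $A$ does all the remaining work and the equivalence follows; the curve class $\beta$ on $\tx$ and $\beta' = \pi_*(\beta)$ on $X$ are matched automatically by the first clause of the partial ordering on graphs.
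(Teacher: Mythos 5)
Your proposal follows essentially the same route as the paper: deformation to the normal cone plus the Correspondence Theorem~\ref{thm:Correspondence}, run in both directions, with lower-triangularity and invertibility of $A$ transferring nonvanishing between relative invariants of $(\tx,E)$ and absolute invariants. Your use of $A^{-1}$ on the $X$-side (where the paper simply applies the degeneration formula to the given nonzero invariant on $X$ and extracts a nonzero term from the resulting sum of products) is a heavier but equivalent way of getting the same nonzero relative invariant of $(\tx,E)$.

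However, one step fails as written: the reduction of absolute invariants of $\tx$ to relative invariants of $(\tx,E)$ ``via the degeneration-to-the-normal-cone formula (applied to the same $W$, now reading off invariants of $W^-\cong\tx$).'' The degeneration formula for $W$, the blow up of $X\times\AAA^1$ along $S\times 0$, expresses absolute invariants of the \emph{general} fiber $W_t\cong X$ in terms of relative invariants of $(W^-,Z)$ and $(W^+,Z)$; it says nothing about absolute invariants of the component $W^-\cong\tx$ of the special fiber, so there is nothing to ``read off'' for $\tx$ from this family. The correct move, and the paper's, is to use a second, different degeneration: deform $\tx$ itself to the normal cone of the divisor $E\subset\tx$, i.e.\ blow up $\tx\times\AAA^1$ along $E\times 0$, whose general fiber is $\tx$ and whose special fiber is $\tx\cup_E\PP_E(\OO\oplus N_{E/\tx})$ --- here one uses that blowing up $\tx$ along the divisor $E$ returns $\tx$, which is precisely the $k=1$ case the paper builds into its setup and invokes with the phrase ``take $S$ to be $E$'' when applying Theorem~\ref{thm:Correspondence}. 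Once your parenthetical is replaced by this second family (together with the bookkeeping you already flag: the $E$-supported insertions $\iota_*\delta_j$ must be specialized to the projective-bundle side so that the surviving relative invariant of $(\tx,E)$ has only $\pi^*\sigma$ insertions and is therefore standard), your argument goes through and coincides with the paper's proof.
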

\begin{proof}
Suppose there is a non-zero descendant Gromov-Witten invariant on $X$ of the form 
\[
\langle [C], \tau_{d_1}A_1, \ldots, \tau_{d_n} A_1 \rangle^{X}_{0, \beta'}.
\]
We may assume that all the $A_i$ are of the form $\sigma_i$. We degenerate $X$ into $\tx$ and $\PP_S(\OO \oplus N_{S/X})$ and apply the degeneration formula. So there is a non-zero relative invariant:
\[
\langle [C], \tau_{d_1}A_1, \ldots, \tau_{d_k} A_k \vert \mu \rangle^{\tx, E}_{0, \beta}.
\]
Then in the setup of Theorem~\ref{thm:Correspondence}, take $S$ to be $E$. Note that the blow up of $\tx$ with center $E$ is $\tx$ itself. So the theorem gives an absolute invariant of $\tx$ of the desired form.

Conversely, suppose there is a non-zero descendant Gromov-Witten invariant on $\tx$ of the form 
\[
\langle [C], \tau_{d_1}A_1, \ldots, \tau_{d_n} A_n \rangle^{\tx}_{0, \beta}.
\]
We may assume that $A_i, 1 \leq i \leq m$ are of the form $\pi^* \sigma_{i_j}$ and $A_i, m+1 \leq i \leq n$ are of the form $\iota_*(\delta_{i_j})$. Then we degenerate $\tx$ into $\tx$ and $\PP_E(\OO \oplus N_{E/X})$. We specialize $A_i, m+1 \leq i \leq n$ to the projective bundle side. Then we get a relative invariant of the form:
\[
\langle [C], \tau_{d_1}A_1, \ldots, \tau_{d_k} A_k \vert \mu \rangle^{\tx, E}_{0, \beta}.
\]
with $k \leq m$. In particular, all the $A_i, 1 \leq i \leq k$ are of the form $\pi^* \sigma_{i_j}$. Again apply Theorem~\ref{thm:Correspondence} to $(\tx, E)$ and $(X, S)$ we get an absolute descendant invariant of desired form.

\end{proof}

\subsection{Symplectic Invariance}

We first recall the proof of Theorem \ref{thm:KoRu} in ~\cite{KollarUni} and ~\cite{RuanUni}. 
\begin{proof}[Proof of Theorem \ref{thm:KoRu}]
We first choose a polarization of $X$. Then there exists a minimal free curve $C$ with respect to the polarization. Note that every rational curve through a very general point $p$ in $X$ is free. So if we choose such a point and consider all the curves mapping to $X$ of class $[C]$ and passing through $p$, then we get a proper family ( since none of them can break by minimality) of expected dimension ( the deformation is unobstructed). Therefore the Gromov-Witten invariant $\langle [pt], A^2, \ldots A^2\rangle_{0, [C]}^{X}$ is non-zero, where $A$ is a very ample divisor. Clearly this is the number of curves meeting all the constraints.

\end{proof}

\begin{proof}[Proof of symplectic invariance]
By Theorem~\ref{thm:KoRu}, $X$ and $X'$ are symplectic uniruled. Now we can look at the \emph{maximal rationally connected (MRC) quotient} of $X'$: a rational map $X' \dashrightarrow S$ such that the closure of a general fiber ( in $X'$) is the equivalence class of points in $X'$ that are connected by a chain of rational curves. $X'$ is rationally connected if and only if $S$ is point. In our case, $\dim S \leq 2$. By the theorem of Graber-Harris-Starr (\cite{GHS03}), $S$ is not uniruled. 

We will use proof by contradiction. So assume $S$ is not a point. If $S$ is a curve, it cannot be a rational curve. Then we get a non-zero section of $H^0(X', \Omega_{X'})$. But $X'$ is simply connected since $X$ and $X'$ are diffeomorphic and $X$ is simply connected. Then by Hodge decomposition, $H^0(X', \Omega_{X'})=0$. This is a contradiction. So $S$ has to be a non-uniruled surface and the general fiber of the rational map $X' \dashrightarrow S$ is a rational curve. 

Note that in this case every fiber class is the curve class $[C]$ by minimality. So we see that on $X'$ we have 
\[
\langle [pt] \rangle_{0, [C]}^{X'}=1, -K_{X'} \cdot C=2.
\]
 Both of these conditions are symplectic deformation invariant. So we get on $X$:
\[
\langle [pt] \rangle_{0, [C]}^{X}=1, -K_X \cdot C=2.
\]

We have seen that this Gromov-Witten invariant is enumerative. So there is only one minimal free curve passing through a general point in $X$. Let
\[
\pi: \mcC \rightarrow \Sigma
\]
be the universal family of the minimal free curves and 
\[
f: \mcC \rightarrow X
\]
be the universal map. Then $f$ is birational. Thus there is a rational map $X \dashrightarrow \Sigma$. That is, $X$ is birational to a conic bundle over a rational surface.

Let $\Gamma \subset X \times \Sigma$ be the closure of the rational map $X \dashrightarrow \Sigma$. Then it is easy to see that the exceptional divisors of $\Gamma \rightarrow X$ do not dominate $\Sigma$. So there is an open subset $U$ of $X$ and a smooth open subset $V$ of $\Sigma$ such that $U \rightarrow V$ is a well defined proper morphism and a general fiber is $\PP^1$. we can choose smooth projective compactifications of $U$ and $V$, denoted by $Y$ and $\Sigma{'}$, together with a morphism $Y\rightarrow \Sigma{'}$ such that a general fiber is $\PP^1$. By the weak factorization theorem in~\cite{AKMW}, there is a sequence of blow-up/ blow-downs
\[
X=X_0 \dashrightarrow X_1 \ldots \dashrightarrow X_n=Y
\]
such that every birational map is an isomorphism over $U$. In particular, there is a free curve $C$ in every $X_i$ away from every exceptional divisor.

By Theorem~\ref{thm:conic}, there is a non-zero Gromov-Witten invariant on $Y$ of the form $\langle [C], \ldots \rangle^Y_\beta$. So by Theorem~\ref{thm:invariance}, there is a descendant Gromov-Witten invariant on $X$ of the form $\langle [C], \tau_{d_1}A_1, \ldots \rangle^X_{\beta'}$. Hence also in $X'$. We only need to show that the curve $\beta'$ is not supported in a fiber of $X' \dashrightarrow S$. If so, then $\beta'$ is a multiple of $[C]$ and $-K_{X'} \cdot \beta' \geq 2$. So there are other insertions in the descendant invariant. But notice that by the proof of Theorem~\ref{thm:conic} and ~\ref{thm:invariance}, all such insertions are of the form $\tau_d A, A \in H^{\geq 4}(X', \QQ)$. So we can take representatives of the cycle $A$ away from a general fiber $C$. Then the invariant should be zero since a curve supported in a fiber cannot meet the cycle representing $A$. Therefore we have shown that $S$ is uniruled by the images of $\beta'$. This is a contradiction. So $X'$ is actually rationally connected.

\end{proof}

\section{Fano threefolds}\label{fano}
The main result in this section is the following theorem.

\begin{thm}\label{thm:Fano3}
If $X$ is a Fano threefold, then there is a non-zero Gromov-Witten invariant of the form $\langle [pt], [pt], [A_3], \ldots, [A_n]\rangle_{0, \beta}^{X}$. Moreover, this Gromov-Witten invariant is enumerative.
\end{thm}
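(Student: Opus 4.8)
The plan is to realize the desired invariant enumeratively by counting irreducible very free rational curves through two general points. Since $X$ is Fano it is rationally connected, so by Koll\'{a}r--Miyaoka--Mori two general points $p, q \in X$ can be joined by a very free rational curve. Among all effective curve classes $\beta$ for which a very free rational curve through two general points exists, I would fix one of minimal anticanonical degree $d = -K_X \cdot \beta$. The virtual dimension of $\overline{\mathcal{M}}_{0,2}^{X,\beta}$ is $d + 2$, so the two point insertions (each of complex codimension $3$) together with a collection of auxiliary insertions $A_3, \ldots, A_n$ of total codimension $d - 4$ cut the expected dimension down to zero; here I would take the $A_i$ to be general members of a very ample linear system. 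Note that $d \geq 4$, since the fiber of the evaluation map $ev = (ev_1, ev_2) \colon \overline{\mathcal{M}}_{0,2}^{X,\beta} \to X \times X$ over a general $(p,q)$ has dimension $d - 4$ and must be nonnegative for such curves to exist.

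The first substantive step is unobstructedness. For a very free curve $f \colon \p \to X$ with $f^* T_X \cong \bigoplus_{i=1}^3 \OO_{\p}(a_i)$, $a_i \geq 1$, marked by $0, \infty$ mapping to $p, q$, the obstruction to deforming $f$ while keeping the two incidences is governed by
\[
H^1\big(\p, f^* T_X(-0 - \infty)\big) = \bigoplus_{i=1}^3 H^1\big(\p, \OO_{\p}(a_i - 2)\big),
\]
which vanishes precisely because $a_i \geq 1$. Hence over a general $(p, q)$ the fiber of $ev$ is smooth of the expected dimension $d - 4$; moreover very freeness makes $ev$ dominant, so this fiber is nonempty. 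After imposing the general auxiliary constraints $A_3, \ldots, A_n$ and applying Kleiman transversality, the cut-down moduli space is a finite set of reduced points, each corresponding to an irreducible very free curve meeting all the constraints. On such a smooth locus the virtual fundamental class agrees with the usual fundamental class, so $\GWpp$ equals the honest count of these curves; in particular it is enumerative, and it is positive as soon as the count is nonempty, which persists under intersection with general very ample divisors by the choice of $\beta$.

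The main obstacle is properness: a priori the cut-down space could fail to be compact because a limit of irreducible very free curves satisfying the constraints might be a reducible (nodal) genus-zero stable map lying in the boundary of $\overline{\mathcal{M}}_{0,2}^{X,\beta}$, which could then produce an excess-dimensional or negatively contributing stratum. I would rule this out using the minimality of $d$ together with the genericity of the constraints. A boundary configuration is a connected chain of rational curves through $p$ and $q$ whose classes sum to $\beta$; any component carrying neither a point nor enough of the auxiliary cycles must, by a dimension count against general constraints, fail to be met, while a component passing through both general points would, by minimality of the connecting degree, have to be the whole curve. Equivalently, a bend-and-break argument bounds the degree at which a connecting curve must break, and the minimal connecting class cannot break while remaining connected through both points. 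Once compactness of the constraint locus is established, the finitely many solution curves are exactly the irreducible very free curves counted above, each contributing positively, and no boundary contribution can occur; this yields a nonzero, enumerative invariant of the required shape.
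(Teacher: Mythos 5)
Your skeleton does match the paper's overall strategy: take a very free curve of minimal $-K_X$-degree $d$, impose two point insertions plus auxiliary insertions, use $H^1(\PP^1, f^*T_X(-p-q))=0$ to see that the component of the moduli space whose general member is very free contributes positively, and then deal with the remaining components. But the two essential ingredients are missing. First, the step you compress into ``a dimension count against general constraints'' is where virtually all of the work lies, and as stated it is false. The auxiliary constraints must be codimension-two cycles: your divisor insertions cut down nothing, by the divisor axiom (the correct bookkeeping is $\sum_i(\operatorname{codim}_{\CC} A_i - 1) = d-4$, so codimension-one classes contribute zero), and the paper accordingly uses classes $[A^2]$ of complete-intersection curves. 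Now a codimension-two cycle in a threefold meets any surface in finitely many points, so a non-free component --- a curve moving only in the surface it sweeps out, a rigid curve, or a multiple cover --- can perfectly well meet a general auxiliary constraint. Configurations of the form (free curve through $p$) $\cup$ (free curve through $q$) $\cup$ (non-free low-degree components) can therefore satisfy all the incidence conditions while lying in components of excess dimension, whose contribution to the virtual class is a priori of either sign; genericity alone does not make them disappear. (Properness is not the issue: $\overline{\mathcal{M}}_{0,n}^{X,\beta}$ is always proper; the problem is exactly these excess components.) The paper's proof consists almost entirely of a case-by-case treatment of such configurations: either it shows they are finite in number once the points and some of the constraints are fixed, so the remaining constraints can be chosen to miss them, or it uses bend-and-break together with minimality of $d$ to manufacture a very free curve of strictly smaller degree, a contradiction.

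Second, that case analysis is finite only because of a quantitative input your proposal does not have: every Fano threefold carries a very free curve with $-K_X\cdot C \le 6$ (Theorem \ref{thm:lowdegree} of the paper), proved via the classification of $K_X$-negative extremal contractions and a primitivity reduction. With $d\le 6$, and since any component through a general point is free of degree $\ge 2$ while no single component passes through both points (by minimality), the possible degeneration shapes are just $(2,2)$, $(2,2,1)$, $(2,3,1)$, $(2,2,1,1)$ and $(2,2,2)$, each handled by hand. Minimality of $d$, which is all you invoke, gives no upper bound on $d$ and hence no finite list of degeneration types; for large $d$ the non-free part can have large total degree, many components, or multiple-cover pieces, and your bend-and-break remark controls neither what the broken pieces look like nor their contribution to the invariant. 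So the proposal correctly identifies the framework but leaves unproven both the degree bound that makes the problem finite and the boundary analysis that constitutes the actual proof.
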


\subsection{Some results from birational geometry}
In this section we collect some results on the classification of $K_X$-negative extremal contractions on a smooth projective variety. 
\begin{thm}[\cite{KM98}~\cite{kollar1994}]\label{thm:Mori}
Let $X$ be a smooth threefold and $contr: X \rightarrow Y$ be a contraction of $K_X$-negative extremal ray. Then one of the followings holds.
\begin{itemize}
\item{(E1)} $Y$ is smooth and $X$ is the blow up of $Y$ along a smooth curve.
\item{(E2)} $Y$ is smooth and $X$ is the blow up of $Y$ along a point.
\item{(E3)} $Y$ is singular and locally analytically isomorphic to $x^2+y^2+z^2+w^2=0$. $X$ is the blow up at the singular point.
\item{(E4)} $Y$ is singular and locally analytically isomorphic to $x^2+y^2+z^2+w^3=0$. $X$ is the blow up at the singular point.
\item{(E5)} $contr$ contracts a smooth $\PP^2$ with normal bundle $\OO_{\PP^2}(-2)$ to a point of multiplicity $4$ in $Y$.
\item{(C)} $Y$ is a smooth surface and $X$ is a conic bundle over $Y$.
\item{(D)} $Y$ is a smooth curve and $X$ is a fiberation in Del Pezzo surfaces.
\item{(F)} $X$ is a Fano $3$-fold with Picard number one and $Y$ is a point.
\end{itemize}
\end{thm}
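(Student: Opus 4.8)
The plan is to derive this classification from the two pillars of Mori theory. First I would apply the Cone Theorem to $X$, which asserts that the $K_X$-negative part of $\overline{NE}(X)$ is rational polyhedral and that each extremal ray $R$ is spanned by the class of a rational curve of bounded anticanonical degree. The Contraction Theorem (here the base-point-free theorem in dimension three) then supplies the morphism $contr: X \rightarrow Y$ onto a normal projective variety $Y$, uniquely characterized by having connected fibers and by contracting exactly those curves whose numerical class lies in $R$; by construction $-K_X$ is $contr$-ample.

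Having produced $contr$, I would organize the analysis by $\dim Y$. When $\dim Y < 3$ the contraction is of fiber type, and I would read off the structure of a general fiber $F$ from the relative ampleness of $-K_X$ together with adjunction. If $\dim Y = 2$, then $F$ is a curve of anticanonical degree at most $2$, so $X \rightarrow Y$ is a conic bundle (C); if $\dim Y = 1$, then $F$ is a smooth surface with ample $-K_F$, i.e. a del Pezzo surface, giving a del Pezzo fibration (D); and if $\dim Y = 0$, then $-K_X$ itself is ample and the Picard number of $X$ equals one, so $X$ is Fano (F). The base $Y$ is automatically smooth in case (D), being a normal curve, and its smoothness in case (C) follows from a closer analysis of the conic fibration structure.

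The substance of the theorem is the birational case $\dim Y = 3$. The first step here is to show that a smooth threefold admits no small contractions, so that the exceptional locus $E$ is in fact a divisor: combining the length $\ell \geq 1$ of the extremal ray with the fiber-dimension inequality of Ionescu and Wi\'sniewski forces any positive-dimensional fiber of a birational $contr$ to have the wrong dimension unless $\operatorname{codim} E = 1$. Once $E$ is known to be a prime divisor, I would distinguish the five types E1--E5 by studying the normal bundle $N_{E/X}$, the restriction $-K_X|_E$, and the induced contraction $E \rightarrow contr(E)$; this determines whether the image of $E$ is a smooth curve or point with $Y$ smooth (E1, E2), or a singular point of $Y$ of the prescribed analytic type (the node $x^2+y^2+z^2+w^2=0$ in E3, the singularity $x^2+y^2+z^2+w^3=0$ in E4, and the multiplicity-four point obtained by contracting a $\PP^2$ with normal bundle $\OO_{\PP^2}(-2)$ in E5).

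I expect this last local analysis to be the main obstacle. Pinning down the precise list E1--E5 --- in particular controlling the normal bundle of $E$, identifying the singularity of $Y$, and ruling out every configuration outside the five listed --- is the technically delicate heart of Mori's original argument, and it rests on a careful study of extremal rational curves subject to the numerical constraints imposed by the relative ampleness of $-K_X$ along the fibers.
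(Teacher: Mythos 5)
The first thing to note is that the paper does not prove this statement at all: it is Mori's classification of $K_X$-negative extremal contractions on smooth threefolds, quoted as a known result from the cited references (Koll\'ar--Mori's book and Koll\'ar's lectures, ultimately Mori's 1982 paper) and used as a black box in Section 3. So there is no proof in the paper to compare yours against; the only question is whether your sketch would stand on its own as a proof.

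As an outline, your proposal reproduces the standard architecture correctly: Cone theorem, Contraction theorem, case division by $\dim Y$, identification of the fiber-type cases via relative ampleness of $-K_X$ and adjunction, and exclusion of small contractions so that a birational contraction is divisorial (your appeal to the length of the ray together with the Ionescu--Wi\'sniewski fiber-dimension inequality is a legitimate, if anachronistic, way to get this: a small contraction of a threefold would need $\dim E + \dim F \geq 3$ while both are at most $1$). But the proposal stops exactly where the theorem begins. The entire content of the statement --- that divisorial contractions fall into precisely the five types E1--E5, that $Y$ is smooth in E1 and E2, that the singular point has the stated analytic equation in E3/E4, that E5 is a $\PP^2$ with normal bundle $\OO_{\PP^2}(-2)$ contracted to a multiplicity-$4$ point, and the smoothness of the surface $Y$ in case (C) --- is deferred in your final paragraph rather than proved. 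Likewise, in the fiber-type cases you have not justified equidimensionality or why $\dim Y = 2$ actually yields a conic bundle structure (one needs, for instance, that $contr_*\OO_X(-K_X)$ is locally free of rank $3$ and embeds $X$ into a $\PP^2$-bundle over $Y$ with conic fibers). So what you have is a correct road map with the destination missing: the ``technically delicate heart'' you point to is not a final detail but the theorem itself, and to complete the argument you would either have to carry out Mori's analysis of $-K_X|_E$, $N_{E/X}$, and the induced contraction $E \rightarrow contr(E)$, or simply cite the result --- which is exactly what the paper does.
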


It is very easy to workout the exceptional divisor in the exceptional contraction cases. And we have the following corollary.
\begin{cor}\label{cor:normalbundle}
In the case of (E2)-(E5), the exceptional divisor is rationally connected and the following is the list of minimal very free curves in the exceptional divisor and their normal bundles in $X$.
\begin{itemize}
\item{(E2)} A line $L$ in $\PP^2$, $N_{L/X}\cong \OO(1)\oplus \OO(-1)$.
\item{(E3)} A conic $C$ in a smooth quadric hypersurface. $N_{C/X}\cong \OO(2) \oplus \OO(-2)$.
\item{(E4)} A conic $C$ in a quadric cone. $N_{C/X}\cong \OO(2) \oplus \OO(-2)$.
\item{(E5)} A line $L$ in $\PP^2$, $N_{L/X}\cong \OO(1) \oplus \OO(-2)$.
\end{itemize}
\end{cor}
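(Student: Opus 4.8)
The plan is to treat all four cases uniformly through the normal bundle exact sequence together with the splitting of vector bundles on $\p$. In each case the exceptional divisor $E$ is either $\PP^2$ or a (possibly singular) quadric surface, and the listed curve $C$ is a smooth rational curve $\cong \p$ lying in the smooth locus of $E$; hence $N_{C/X}$, $N_{C/E}$ and $N_{E/X}|_C$ are all direct sums of line bundles. Rational connectedness of $E$ is immediate: in (E2) and (E5) it is $\PP^2$, in (E3) it is $\PP^1\times\PP^1$, and in (E4) it is a quadric cone, all of which are rational. The curves listed are the minimal \emph{very free} curves of the surface $E$: a line $L$ in $\PP^2$ has $T_{\PP^2}|_L=\OO(2)\oplus\OO(1)$, and a $(1,1)$-conic on a smooth quadric (or a general hyperplane conic on the cone) has restricted tangent bundle $\OO(2)\oplus\OO(2)$, whereas a ruling is only free; so these are the correct minimal choices.

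The heart of the argument is the exact sequence
\[
0 \to N_{C/E} \to N_{C/X} \to N_{E/X}|_C \to 0,
\]
which is valid because $C$ sits in the smooth locus of $E$. Writing $N_{C/E}=\OO(a)$ and $N_{E/X}|_C=\OO(-b)$, the extension class lives in $\mathrm{Ext}^1_{\p}(\OO(-b),\OO(a))=H^1(\p,\OO(a+b))$. In each of the four cases one has $a+b\geq 2$, so this group vanishes and the sequence splits, giving $N_{C/X}\cong\OO(a)\oplus\OO(-b)$. Thus the whole problem reduces to computing the two line bundles $N_{C/E}$ (intrinsic to the surface $E$) and $N_{E/X}|_C$ (the restriction of the normal bundle of the exceptional divisor).

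First I would dispose of (E2) and (E5), where $E=\PP^2$ with $N_{E/X}=\OO_{\PP^2}(-1)$ (the standard normal bundle of the exceptional divisor of a point blow-up), resp. $N_{E/X}=\OO_{\PP^2}(-2)$ as given in (E5). With $L$ a line one has $N_{L/E}=\OO(1)$ and $N_{E/X}|_L=\OO(-1)$ (resp. $\OO(-2)$), and the splitting criterion yields $\OO(1)\oplus\OO(-1)$ (resp. $\OO(1)\oplus\OO(-2)$). For (E3) and (E4) the point is to identify $N_{E/X}$. I would realise $X$ as the strict transform of $Y$ under the blow-up of the ambient smooth $4$-fold (locally $\mathbb{A}^4$) at the origin, whose exceptional divisor is a $\PP^3$ carrying $N_{\PP^3}=\OO_{\PP^3}(-1)$. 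In (E3) the strict transform meets this $\PP^3$ transversally along the smooth quadric $E=\PP^1\times\PP^1$, so $N_{E/X}=\OO_{\PP^3}(-1)|_E=\OO(-1,-1)$; for the $(1,1)$-conic $C$ this gives $N_{C/E}=\OO(2)$ and $N_{E/X}|_C=\OO(-2)$, hence $N_{C/X}=\OO(2)\oplus\OO(-2)$.

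The main obstacle is (E4), where $E$ is the singular quadric cone $\{x^2+y^2+z^2=0\}$ and the transversality used above fails at the vertex. Here I would take $C$ to be a general hyperplane section of the cone, a smooth conic avoiding the vertex and lying in the smooth locus of $E$. A chart computation of the strict transform of $\{x^2+y^2+z^2+w^3=0\}$ shows that $X$ is smooth and that it meets the exceptional $\PP^3$ transversally away from the vertex, so again $N_{E/X}|_C=\OO_{\PP^3}(-1)|_C=\OO(-2)$; since $C$ is a hyperplane conic on a surface of degree $2$, the self-intersection gives $N_{C/E}=\OO_E(C)|_C=\OO(2)$, and the splitting criterion produces $N_{C/X}=\OO(2)\oplus\OO(-2)$. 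The only care needed throughout is to keep $C$ in the smooth locus of $E$ so that the normal bundle sequence and the line-bundle restrictions are defined; once that is arranged, the splitting is forced by the $H^1$-vanishing above.
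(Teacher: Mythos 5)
Your proof is correct. The paper gives no argument for this corollary (it is asserted as "very easy to work out" from the classification in Theorem 4.1), and your computation — reading off the pair $(E, N_{E/X})$ in each case from the local equations of the blow-up, then splitting $0 \to N_{C/E} \to N_{C/X} \to N_{E/X}|_C \to 0$ via the vanishing of $H^1(\PP^1, \OO(a+b))$ — is precisely the standard verification the paper leaves implicit, including the necessary care in case (E4) of keeping the conic away from the vertex, where transversality with the exceptional $\PP^3$ fails.
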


We also need the following result from \cite{MoMu2} and \cite{MoMu1}.
\begin{prop}\label{prop:blowup}
Let $X$ be a Fano threefold and $contr: X \rightarrow Y$ be the blow up along a smooth curve. Then $Y$ is Fano unless we are in the following case:
$X$ is the blow up along a smooth $\PP^1$ whose normal bundle in $Y$ is $\OO(-1)\oplus \OO(-1)$. 

In this case, the exceptional divisor $E$ is isomorphic to $\PP^1 \times \PP^1$ and the normal bundle of the curve of bi-degree $(1,1)$ is $\OO(2)\oplus \OO(-2)$.
\end{prop}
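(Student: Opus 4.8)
The plan is to follow Mori--Mukai~\cite{MoMu1},~\cite{MoMu2} and extract the exceptional case from the ampleness of $-K_X$ by a short intersection-theoretic analysis on the exceptional divisor $E$. Throughout write $g$ for the genus of $C$ and $d=\deg N_{C/Y}$, and recall $K_X=\pi^*K_Y+E$, so that $-K_X=\pi^*(-K_Y)-E$. First I would reduce the Fano-ness of $Y$ to a single numerical condition. If $\Gamma\subset Y$ is an irreducible curve with $\Gamma\ne C$, its strict transform $\widetilde\Gamma$ satisfies $\pi_*\widetilde\Gamma=\Gamma$ and $E\cdot\widetilde\Gamma\ge 0$, hence
\[
-K_Y\cdot\Gamma=(-K_X+E)\cdot\widetilde\Gamma=(-K_X\cdot\widetilde\Gamma)+E\cdot\widetilde\Gamma>0 .
\]
Since $X$ is Fano, $\overline{NE}(X)$ is rational polyhedral and $\overline{NE}(Y)=\pi_*\overline{NE}(X)$ is again polyhedral, generated by the images of the extremal generators of $X$. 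Each such generator is either $0$ (a contracted fiber), the class of a curve $\ne C$ (hence $-K_Y$-positive by the display), or a positive multiple of $[C]$. By Kleiman's criterion this gives
\[
Y\text{ is Fano}\iff -K_Y\cdot C>0\iff d>2g-2,
\]
the last step by adjunction $-K_Y\cdot C=2-2g+d$. So the task is to show that $X$ Fano together with $d\le 2g-2$ forces $g=0$ and $N_{C/Y}\cong\OO(-1)\oplus\OO(-1)$.

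The ampleness of $-K_X$ yields two inequalities. Restricting the ample class $-K_X$ to the ruled surface $E$, positivity of the top self-intersection gives $(-K_X)^2\cdot E>0$; using the standard blow-up identities $E^3=-d$, $\pi^*(-K_Y)\cdot E^2=K_Y\cdot C=2g-2-d$ and $\pi^*(-K_Y)^2\cdot E=0$, one computes
\[
(-K_X)^2\cdot E=d-4g+4,\qquad\text{so}\qquad d>4g-4 .
\]
Next, the minimal section $C_0\subset E$, corresponding to a sub-line-bundle of $N_{C/Y}$ of maximal degree $m_{\max}$, satisfies $\pi_*C_0=C$ and $E\cdot C_0=m_{\max}$, whence
\[
0<-K_X\cdot C_0=(-K_Y\cdot C)-E\cdot C_0=(2-2g+d)-m_{\max},
\]
that is, $m_{\max}<2-2g+d$.

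Now I would combine. The hypothesis $d\le 2g-2$ together with $d>4g-4$ forces $4g-4<2g-2$, hence $g=0$ and $C\cong\PP^1$; moreover $-4<d\le-2$, so $d\in\{-3,-2\}$. Writing $N_{C/Y}=\OO(a)\oplus\OO(b)$ with $a\ge b$ we have $m_{\max}=a$, and the inequality $m_{\max}<2+d=2+a+b$ becomes $b\ge-1$. If $d=-3$ then $a=-3-b\le-2<-1\le b$, contradicting $a\ge b$; if $d=-2$ then $a=b=-1$. Hence the only non-Fano case is $N_{C/Y}\cong\OO(-1)\oplus\OO(-1)$, as asserted.

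In this case $E=\PP(N_{C/Y}^\vee)\cong\PP(\OO\oplus\OO)=\PP^1\times\PP^1$, which gives the stated isomorphism. For the curve $D$ of bidegree $(1,1)$ the normal bundle sequence of $D\subset E\subset X$ reads
\[
0\to N_{D/E}\to N_{D/X}\to N_{E/X}|_D\to 0 ,
\]
where $N_{D/E}=\OO_E(D)|_D\cong\OO(2)$ and $N_{E/X}=\OO_E(-1,-1)$ (the two coefficients fixed by $E\cdot f=-1$ and $E^3=-d=2$), so that $N_{E/X}|_D\cong\OO(-2)$; since $\mathrm{Ext}^1(\OO(-2),\OO(2))=H^1(\PP^1,\OO(4))=0$ the sequence splits and $N_{D/X}\cong\OO(2)\oplus\OO(-2)$. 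The main obstacle is pinning down the two intersection numbers $(-K_X)^2\cdot E$ and $E\cdot C_0$ with the correct blow-up sign conventions for $E^3$ and $N_{E/X}$: it is exactly these two inequalities that collapse the a priori possibilities to the single bundle $\OO(-1)\oplus\OO(-1)$.
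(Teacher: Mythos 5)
Your proof is correct, but it takes a genuinely different route from the paper, for the simple reason that the paper gives no proof at all: Proposition \ref{prop:blowup} is quoted there as a known result extracted from Mori--Mukai's classification work (\cite{MoMu1}, \cite{MoMu2}). What you have written is a self-contained substitute, and the key steps check out. The reduction of Fano-ness of $Y$ to the single inequality $-K_Y\cdot C>0$ is sound: since $X$ is Fano, $\overline{NE}(X)$ is rational polyhedral, so $\overline{NE}(Y)=\pi_*\overline{NE}(X)$ is closed and polyhedral, and every generator is either zero, a positive multiple of $[C]$, or the class of an irreducible curve $\Gamma\neq C$, on which $-K_Y\cdot\Gamma=-K_X\cdot\widetilde\Gamma+E\cdot\widetilde\Gamma>0$; Kleiman's criterion then gives the equivalence. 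Your two ampleness inequalities carry the correct sign conventions: $E^3=-d$ and $\pi^*(-K_Y)\cdot E^2=K_Y\cdot C$ yield $(-K_X)^2\cdot E=d-4g+4>0$, and $E\cdot C_0=m_{\max}$ is right (with the convention $\OO_E(E)=\OO_{\PP(N^\vee_{C/Y})}(-1)$, the section attached to a sub-line bundle $M\subset N_{C/Y}$ pulls $\OO_E(E)$ back to $M$ itself), yielding $m_{\max}<2-2g+d$. The combination $4g-4<d\le 2g-2$ forcing $g=0$ and $d\in\{-3,-2\}$, then $b\ge-1$ eliminating $d=-3$ and pinning down $\OO(-1)\oplus\OO(-1)$, is airtight, as is the concluding computation $N_{D/X}\cong\OO(2)\oplus\OO(-2)$ via the split normal bundle sequence on the $(1,1)$-curve. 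As for what each approach buys: the paper's citation is economical and anchors the statement in the classification literature, while your argument makes the statement self-contained, using nothing beyond the cone theorem, Kleiman's criterion, and blow-up intersection formulas---and it is, in spirit, essentially the numerical argument by which such statements are established in the classification papers themselves.
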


\begin{defn}
Let $X$ be a Fano $3$-fold. We say $X$ is primitive if it is not the blow up along a smooth curve of another Fano $3$-fold.
\end{defn}
\subsection{Construction of low degree very free curves}

We first show the following theorem.
\begin{thm}\label{thm:lowdegree}
Let $X$ be a Fano threefold. Then there is very free curve whose $-K_X$ degree is at most $6$.
\end{thm}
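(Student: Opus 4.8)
The plan is to use the minimal model program to reduce to the simplest Fano threefolds, and then to build a very free curve of bounded anticanonical degree by attaching free curves to an explicit rational curve and smoothing.

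First I would reduce to the case of primitive Fano threefolds. If $X$ is not primitive, then by definition there is a contraction $contr\colon X \to Y$ of type (E1) with $Y$ a Fano threefold, exhibiting $X$ as the blow up of $Y$ along a smooth curve $B$ with exceptional divisor $E$. Suppose $Y$ carries a very free curve $g\colon \PP^1 \to Y$ with $-K_Y \cdot g_*[\PP^1] \le 6$. Since very free curves move in a family covering $Y$ and $B$ has codimension two, a general deformation of $g$ has image disjoint from $B$; its strict transform $\tilde g \colon \PP^1 \to X$ then lands in $X \setminus E \cong Y \setminus B$, so $\tilde g^* T_X \cong g^* T_Y$ is again very free and $-K_X \cdot \tilde g_*[\PP^1] = -K_Y \cdot g_*[\PP^1] \le 6$. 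Iterating the blow downs of type (E1) to Fano targets terminates at a primitive Fano threefold, so it suffices to produce a very free curve of $-K_X$-degree at most $6$ when $X$ is primitive.

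For a primitive $X$ I would run through the possible extremal contractions $contr\colon X \to Y$ of Theorem~\ref{thm:Mori}. Primitivity rules out (E1) with $Y$ Fano, so any divisorial contraction is of type (E2)--(E5) or the exceptional case of Proposition~\ref{prop:blowup}. In all of these the exceptional divisor is rationally connected and, by Corollary~\ref{cor:normalbundle} and Proposition~\ref{prop:blowup}, contains an explicit rational curve $C_0$ whose normal bundle in $X$ is $\OO(a)\oplus\OO(b)$ with $a\in\{1,2\}$ and $b\in\{-1,-2\}$, so that $-K_X\cdot C_0 \le 2$ in every case. The remaining contractions are the conic bundle (C), the Del Pezzo fibration over a curve (D) and the Picard number one case (F), which I would treat through the fibration structure (a conic in a fiber, respectively a low degree very free curve in a general Del Pezzo fiber) and, for (F), directly from the classification of prime Fano threefolds by index.

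The core construction is then to upgrade such a curve $C_0$, which is very free only inside a subvariety, to a curve that is very free in $X$ while keeping the degree small. I would attach to $C_0$ a bounded number of minimal free rational curves through general points---rulings of the exceptional divisor, fibers of the conic bundle, or minimal free curves of the Del Pezzo fiber---to form a comb, and apply Koll{\'a}r's smoothing criterion to deform the resulting nodal curve to a very free curve of $X$. Each elementary modification coming from a general free tooth raises the negative summand of the normal bundle, and attaching enough teeth makes both summands positive; since the teeth have small $-K_X$-degree and only a bounded number are needed to cancel a deficiency of at most $2$, the total $-K_X$-degree of the smoothed curve should stay at most $6$. The main obstacle is precisely this degree bookkeeping: one must check in each case of Theorem~\ref{thm:Mori} that the smoothing is genuinely very free (not merely free) and that the number of attached teeth required is small enough to respect the bound $6$. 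I expect the Del Pezzo fibration case (D) with fibers of small degree, together with the Picard number one case (F), to be the tightest and to dictate the value $6$.
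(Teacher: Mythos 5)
Your overall architecture (reduce to primitive Fano threefolds, then analyze the extremal contractions of Theorem~\ref{thm:Mori} using Corollary~\ref{cor:normalbundle} and Proposition~\ref{prop:blowup}) matches the paper, but the two places where you defer the work are exactly where your plan breaks. First, the Picard number one case is not a side case you can wave at: settling it ``directly from the classification of prime Fano threefolds by index'' is not an argument, since the classification does not by itself hand you a very free curve of $-K_X$-degree at most $6$; you would still have to produce one, family by family. The paper handles $\rho(X)=1$ with no classification at all, by trichotomy on the degree of a minimal free curve $C$: if $-K_X\cdot C=4$ then $N_{C/X}\cong\OO(1)\oplus\OO(1)$ and $C$ is already very free; if $-K_X\cdot C=3$, the surface swept out by deformations of $C$ through a fixed general point is ample (this is where $\rho=1$ enters), so it meets a minimal free curve through a second general point, and the resulting union smooths to an irreducible curve through two general points, hence very free of degree $6$; if $-K_X\cdot C=2$, Koll\'ar's chain-connectedness results (Ch.~IV, 4.14 and Ch.~V, 2.6 of \cite{Kollar96}) connect two general points by a chain of at most $3=\dim X$ free curves, whose smoothing is very free of degree at most $6$.

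Second, your comb bookkeeping genuinely fails the bound $6$ in the divisorial cases. Every free curve on a Fano threefold has $-K_X$-degree at least $2$, so each tooth costs at least $2$; and in cases (E3), (E4), (E5) and the exceptional case of Proposition~\ref{prop:blowup} the handle $C_0$ has normal bundle $\OO(2)\oplus\OO(-2)$ or $\OO(1)\oplus\OO(-2)$, so raising the negative summand to $\ge 1$ by general elementary modifications requires \emph{three} teeth, for a total degree of at least $7$ or $8$. With only two teeth the modified bundle still has a degree-$0$ summand, and smoothing a comb with free teeth only yields a \emph{free} curve; nothing in your sketch upgrades this to very free. (Also, rulings of the exceptional divisor cannot serve as teeth: they are not free in $X$.) The idea you are missing is the paper's replacement for normal-bundle repair: form a \emph{chain} consisting of a free conic through a general point $p_1$, the minimal very free curve of the exceptional divisor, and a free conic through a general point $p_2$ --- total degree at most $2+2+2=6$ --- check via the deformation and obstruction spaces that this chain smooths while keeping the two point conditions, and then conclude very-freeness for free, because an irreducible rational curve through two very general points is automatically very free. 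This trick caps the number of attached free curves at two, which is what makes the bound $6$ attainable; it also relies, for $\rho(X)\ge 2$, on the Mori--Mukai theorem that a primitive Fano threefold carries an extremal ray of type (C), so that the attached free curves are conic fibers of degree exactly $2$ --- an input your proposal never secures.
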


\begin{proof}
We may assume $X$ is a primitive Fano $3$-fold since the very free curve can be moved away from any blow up center and gives a very free curve in the blow up with the same anticanonical degree. Under this assumption, every exceptional divisor is rationally connected by Corollary~\ref{cor:normalbundle} and Proposition~\ref{prop:blowup}.

We first consider the case $\rho(X)=1$. 

Fix a polarization on $X$. Let $C$ be a minimal free rational curve. If $-K_X \cdot C=4$, then $N_{C/X}\cong \OO(1)\oplus \OO(1)$. We are done in this case. 

If $-K_X\cdot C =3$, then we can fix a general point in $X$ and the deformation of $C$ fixing that point sweeps out a surface $S$. Since $\rho(X)=1$, $S$ is ample. If we take a minimal free curve through another general point, $S$ intersect that curve in a finite number of points. So we have a reducilbe curve, which is the union of two free curves passing through two general points in $X$. So a general deformation of this curve is an irreducible curve passing through two general points, thus very free. The anti-canonical degree of the very free curve is $6$.

If $-K_X\cdot C =2$, then it is proved in Corollary 4.14 (Chapter IV) and Proposition 2.6 (Chapter V) of ~\cite{Kollar96} that there is a chain of free curves of length at most $3=(\dim X)$ connecting two general points. Thus a deformation of this chain gives a very free curve whose anti-canonical degree is at most $6 (=2 \dim X)$.

Next assume $\rho(X)\geq 2$. It is proved in ~\cite{MoMu1} that there is an extremal ray R1 corresponding to a contraction of type (C) (c.f. Theorem 7.1.6 in ~\cite{AG_Fano}). Let $C$ be a general fiber. 

The discussion is divided into three parts according to the type of contraction given by other extremal rays.

 If there is an extremal ray corresponding to a divisorial contraction with exceptional divisor $E$, then $E \cdot C > 0$.  In fact, we may find a possibly reducible curve of class $[C]$ which intersects $E$ since $C$ is free. But then $E \cdot C >0$ since $C$ spans an extremal ray. By Corollary ~\ref{cor:normalbundle} and Proposition~\ref{prop:blowup}, the exceptional divisor is rationally connected. So we can assemble a chain of rational curves in the following way. Take two minimal free curves each through a general point and a minimal very free curve in $E$ connecting the two minimal free curves. Then one can see that this chain of rational curves deforms to an irreducible rational curve by looking at the deformation and obstruction spaces of this reducible curve. A general deformation will again pass through $2$ general points, thus very free. The $-K_X$ degree is at most $6$.

 If there is an extremal ray of type (D), $X$ is a Del Pezzo fiberation over $\PP^1$. We can assemble a connected reducible curve by gluing a free curve $C$ to a minimal very free curve in a general fiber. It is easy to see that a general deformation of this curve is very free with $-K_X$ degree at most $6$.

Finally, assume all the other extremal rays are of type (C). Then there are at least two free curves $C_1$ and $C_2$ through a general point. A general deformation of the union of these two curves is a free curve of $-K_X$ degree $4$. If it is not very free, we may deform this curve fixing a point and get a divisor $H$. Note that $H$ is nef since we can move $H$ away from any curve by changing the point. We are done if $H \cdot C_i>0$ for $i=1$ or $2$. If $H\cdot C_i=0$, then $\rho(X) \geq 3$. Some multiple of $H$ is base point free and defines a morphism $X \rightarrow \PP^1$, which contracts both $C_1$ and $C_2$. 
There is a third extremal ray, of type $C$ by our assumption. Then we get a very free curve with anticanonical degree no more than $6$ by the same argument as above.

\end{proof}

\subsection{Fano threefolds are symplectic rationally connected}
\begin{proof}[Proof of Theorem~\ref{thm:Fano3}]
By Theorem~\ref{thm:lowdegree}, there is a very free curve with $-K_X$ degree no more than $6$. Take a minimal such curve $C$. We consider Gromov-Witten invariant of the form $\langle [pt], [pt], [A^2], \ldots, [A^2] \rangle^X_{0, C}$, where $A$ is the class of a very ample divisor. Note that in any case, the component whose general member is a very free curve contribute positively to this Gromov-Witten invariant. We will show that the contribution from every component of the moduli space is non-negative. Most of the time we do not distinguish the stable map and the curve it maps to.

We first choose two very general points such that any irreducible rational curve through them is very free and any rational curve through one of them is free. In particular, the anti-canonical degree of any curve through one of them is at least $2$. Note that in any degeneration, the two points are not connected by an irreducible curve by minimality of the very free curve. Also notice that if the degeneration is a union of free curves, then this degeneration is in the irreducible component whose general member is a free curve. Since the irreducible component is of expected dimension, the contribution to the Gromov-Witten invariant is non-negative. Thus we only need to consider the case where some components of the curve are not free.

 We discuss the degenerations according to the anti-canonical degree of $C$. 
\begin{enumerate}
\item $-K_X \cdot C=4$. Then the only possible degeneration is two free curves. We are done in this case.
\item $-K_X \cdot C=5$. Then we may add one more constraint, a curve which is the complete intersection of two very ample divisors. It suffices to consider the degeneration into $3$ irreducible components: two of them, $C_1$ and $C_2$, are free curves passing through the two chosen points together with one curve $D$ such that $-K_X \cdot C_1= -K_X \cdot C_2=2$ and $-K_X \cdot D=1$. Notice that $D$ only deforms in a surface $S$ and $C_1, C_2$ does not deform once the two points are fixed. Both $C_1$ and $C_2$ intersect the surface $S$ at finitely many points and $D$ has to pass through at least two of them. Then $D$ does not move once we make the choice of the two points. Otherwise we can deform $D$ fixing two points and, by bend-and-break, $D$ breaks into a reducible or non-reduced curve. But this cannot happen since $-K_X \cdot D=1$ and $-K_X$ is ample. So we see that there are only finitely many such degenerations. Now we can choose the curve representing the constraint in such a way that it misses these finitely many curves.
\item $-K_X \cdot C=6$. In this case we add two more constraints, both of which are curves coming from complete intersections of very ample divisors. There are more possible degenerations.

We observe that no degree $1$ curve can connect  two free curves of $-K_X$ degree $2$ and passing through a general point. Otherwise there is a chain of rational curves connecting two general points with anti-canonical degree $5$. We claim that a general deformation of this reducible curve smooths the nodes. If not, then a general deformation is given by the deformation of the two free curves and the deformation of the degree $1$ curve. Let $S$ be the surface swept out by the degree $1$ curve. Notice that up to finitely many choices, the deformation of the degree $1$ curve is determined by the intersection of the two free curves with the surface $S$: fix two points in the surface, there are only finitely many choices for the degree $1$ curve. So the deformation space has dimension $2+2=4$. But every irreducible component containing this reducible curve has dimension at least $5$(=$-K_X$ degree). So a general deformation smooths at least one of the nodes and we get two irreducible curves each passing through a general point. Then it is easy to see that a general deformation of this curve is very free. So we have constructed a very free curve with lower $-K_X$ degree, a contradiction to our assumption.

First consider the degeneration with four irreducible components whose $-K_X$ degrees are $2, 2, 1, 1$. 

If the two curves of degree $1$ deform in two different surfaces, then each of the degree $1$ curve will connect one (and only one by the above observation) free curve and the other degree $1$ curve. We first choose our constraint curve to avoid the degree $2$ curves. Then one of the degree one curve has to pass through the intersection point of this constraint curve with the surface swept out by itself. Notice that there are only finitely many such choices for the same reason as before. Also once we make the choice for one curve, there are only finitely many choices for the other degree $1$ curve since it has to pass through the intersection of the degree $2$ and the degree $1$ curve with the surface. So we can choose the last constrain to avoid all of these curves. 

Now assume the two degree one curves deform in the same surface. We may choose the constrain curve to intersect the surface at general points. If they meet all the constraints, then one of the degree $1$ curve has to pass through both the intersection of the free curve with the surface and a general point in the surface (i.e. the intersection of one constraint curve with the surface). Thus we can deform it in the surface and connect the two free curves by it. This is impossible by the previous observation.

Next, we consider the degeneration with three irreducible components whose $-K_X$ degrees are $2, 3, 1$. Notice that the free curve with anti-canonical degree $3$ is fixed once we fix a general point and a curve it passes through. Also the two free curves cannot meet each other by minimality. Then basically the same argument as before shows that we can choose the constraints to miss this configuration.

The last possible degeneration is the case where we have $3$ irreducible components with $-K_X$ degree $2$ and one of them is not free. If the non-free curve is the specialization of a free curve, then we can choose the constraints to miss such configurations. So we may assume the non-free curve only deforms in a surface. If this reducible curve can meet all the constraints, then the only non-free curve passes through the intersections of the free curves with the surface and two other general points in the surface. So after we fix the two intersection points and a general point, the curve deforms in a positive dimensional family. Then it breaks into two irreducible components or a non-reduced curve. In this way we get a rational curve with $-K_X$ degree $1$ and passing through one intersection point and a general point in the surface. So the same argument as before shows that we have a very free curve whose $-K_X$ degree is $5$. 

\end{enumerate}

\end{proof}

\begin{rem}
For another proof, see Section~\ref{sec:QFano}.
\end{rem}

\subsection{Some corollaries}
\begin{cor}\label{cor:fanoblowup}
Let $X$ be a rationally connected variety which is obtained by successive blow ups from a Fano threefold $Y$. Then $X$ is symplectic rationally connected.
\end{cor}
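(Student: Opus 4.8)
The plan is to induct on the number of blow-ups, take $Y$ itself as the base case via Theorem~\ref{thm:Fano3}, and use the birational invariance of Theorem~\ref{thm:invariance} to carry the $\GWpp$-property up the tower one step at a time. Write the given factorization as
\[
X=Y_n\xrightarrow{\pi_n}Y_{n-1}\to\cdots\to Y_0=Y,
\]
where each $\pi_{i+1}\colon Y_{i+1}\to Y_i$ is the blow-up along a smooth center $S_i$. In dimension three $S_i$ is a point or a smooth curve, so $S_i$ has codimension at least two. Since $Y$ is Fano it is rationally connected, and Theorem~\ref{thm:Fano3} furnishes a non-zero invariant of the form $\GWpp$ on $Y_0$; this is the base case. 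Each intermediate $Y_i$ is again rationally connected, so the hypothesis of the corollary holds (indeed automatically) at every stage.

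For the inductive step I would assume that $Y_i$ carries a non-zero invariant $\langle [pt],[pt],A_3,\dots,A_m\rangle^{Y_i}_{0,\beta}$ and deduce the same for $Y_{i+1}$. The key observation is that Theorem~\ref{thm:invariance} applies with one of the two point insertions playing the role of the distinguished class $[C]$: its proof uses $[C]$ only through the property that the support of $C$ is disjoint from the exceptional divisor, so that $[C]$ may be specialized entirely to the blow-up component $W^-$ in the deformation to the normal cone. A point chosen off the codimension-$\ge 2$ center $S_i$ has exactly this property. Treating the remaining point as one of the classes $A_j$ (also represented by a cycle disjoint from $S_i$, hence of the form $\pi^*[pt]$ and specialized to the $W^-$ side), Theorem~\ref{thm:invariance} then transfers the non-vanishing on $Y_i$ to a non-vanishing invariant on $Y_{i+1}$ whose insertions still include the two point classes. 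Iterating from $i=0$ to $n-1$ produces a non-zero invariant of the form $\GWpp$ on $X=Y_n$, so $X$ is SRC.

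The point requiring care — and the only real content beyond quoting the two theorems — is the bookkeeping that guarantees the transferred invariant is genuinely of the form $\GWpp$. First I must check that the proof of Theorem~\ref{thm:invariance} uses the curve $C$ solely through disjointness from $E$, so that it goes through verbatim for a point class, and that two distinct distinguished insertions (the two points, both supported off $S_i$) may be specialized simultaneously to the blow-up component; this is immediate from the specialization $[pt]^-=[pt]$, $[pt]^+=0$. Second, the correspondence of Theorem~\ref{thm:Correspondence} invoked inside Theorem~\ref{thm:invariance} can append $\psi$-class decorated insertions $\tau_d(\tilde\delta)$ coming from the relative partition, so the output is in general a descendant invariant two of whose insertions are the point class rather than a primary one. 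Under the definition of $\GWpp$ adopted here this already establishes SRC. If one instead wants a primary witness, the natural refinement is to isolate the term of the degeneration formula with empty relative partition, for which $\tilde\mu$ is empty and the correspondence acts as the identity on insertions, returning $\langle [pt],[pt],A_3,\dots\rangle^{Y_{i+1}}$; showing that this leading term alone does not cancel against the remaining terms is the genuinely delicate step, and it is here that the lower-triangular structure in Theorem~\ref{thm:Correspondence} would have to be exploited.
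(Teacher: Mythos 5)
Your induction-plus-transfer strategy does not close, and the two places where it fails are exactly the ones you flag but then dismiss as bookkeeping. First, Theorem~\ref{thm:invariance} (and the correspondence of Theorem~\ref{thm:Correspondence} behind it) controls only the single distinguished insertion $[C]$: its statement says the transferred invariant has the form $\langle [C], \tau_{d_1}A_1,\ldots,\tau_{d_m}A_m\rangle$ with the remaining insertions completely uncontrolled --- they acquire $\psi$-classes, their number changes, and some of them (the classes coming from $\tilde{\mu}$) are forced to be classes supported on the exceptional divisor. Nothing in the lower-triangular inversion preserves a \emph{second} chosen insertion: the partial order on graphs explicitly allows lower-order terms with $\Vert \omega' \Vert < \Vert \omega \Vert$, i.e.\ with insertions dropped. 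Your remedy --- specialize both points to $W^-$ so that any term losing a point insertion carries the factor $[pt]^+=0$ --- is the right idea, but it is not obtained by quoting the theorems; it requires redefining the index set $I$ with two fixed insertions and re-proving the finiteness lemma, Lemma~\ref{lem:bijection}, and the triangularity and non-vanishing diagonal of Theorem~\ref{thm:Correspondence} for that modified set. That is a re-derivation of the machinery, not an immediate consequence of it.

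The gap that actually kills the argument is the one you concede in your last sentence. Even with the repaired induction, what you obtain on $X$ is a non-zero \emph{descendant} invariant two of whose insertions are point classes. In this paper SRC means a non-zero \emph{primary} invariant $\GWpp$ (this is what Theorem~\ref{thm:Fano3} and Theorem~\ref{thm:SymRC} produce), so your fallback ``under the definition adopted here this already establishes SRC'' misreads the definition. Passing from a descendant invariant with two point insertions to a primary one is precisely the step nobody knows how to do --- it is why the symplectic-invariance section of the paper only ever asserts descendant non-vanishing and then argues geometrically about the MRC quotient, never claiming SRC. The paper's own proof of this corollary avoids the degeneration machinery entirely: Theorem~\ref{thm:Fano3} gives an \emph{enumerative} non-zero invariant on $Y$; the minimal very free curves and all constraints can be chosen away from the images of the blow-up centers; then any curve on $X$ in the proper-transform class meeting the pulled-back constraints maps to one of those finitely many very free curves, no component is contracted by $X\rightarrow Y$, and the same positive enumerative count is a primary invariant of $X$ --- all blow-ups are handled in one stroke, with no induction and no correspondence. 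The Remark immediately following the corollary, on the absence of general blow-up formulas for Gromov--Witten invariants, is in effect a warning against the route you took.
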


\begin{proof}
The minimal very free curve in $Y$ can be deformed away from the blow up centers. We can also choose the constraints in Theorem ~\ref{thm:Fano3} to be away from the centers. Then the very free curves in $Y$ meeting all the constraints are all away from the blow up centers. Observe that the image of any curve satisfying the constraints in $X$ also satisfies the constraints in $Y$. Thus the images are just the minimal very free curves. Then it follows that no components are contracted by the map $X \rightarrow Y$ and the curves in $X$ that can meet these constraints are again very free curves.

\end{proof}

\begin{rem}
As far as the author is aware of, there is no general blow up formulas for Gromov-Witten invariants. All the blow up formulas assume some positivity of the normal bundle of the blow up center. 
\end{rem}


\begin{cor}
Let $X$ be a Fano threefold and $Y\rightarrow X$ a $\PP^1$ bundle over $X$. Then $Y$ is symplectic rationally connected.
\end{cor}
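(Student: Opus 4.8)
The plan is to deduce the statement from Theorem~\ref{thm:Fano3} by using the fibration $\pi\colon Y\to X$, where $\dim Y=4$. Write $Y=\PP(E)$ for a rank-two bundle $E$ on $X$, let $h=c_1(\OO_{\PP(E)}(1))$, and let $F$ be the class of a fiber of $\pi$; recall that $\pi^*[\mathrm{pt}]\cdot h=[\mathrm{pt}]$ on $Y$, so a point constraint on $Y$ is a fiber over a point of $X$ cut by a section divisor.

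First I would construct a very free curve in $Y$ sitting over the minimal very free curve $C\subset X$ produced by Theorem~\ref{thm:Fano3}. Over $C\cong\PP^1$ the restriction $E|_C$ splits, so $Y|_C=\PP(E|_C)$ is a Hirzebruch surface, and I take $\tilde C$ to be a section of large degree (one whose class differs from a fixed section by $k[F]$, $k\gg 0$). Writing $f=\pi|_{\tilde C}\colon\tilde C\xrightarrow{\ \sim\ }C$, the relative Euler sequence restricts to
\[
0\to T_{Y/X}|_{\tilde C}\to T_Y|_{\tilde C}\to f^*T_X\to 0 .
\]
Here $f^*T_X=\oplus\OO(a_i)$ with $a_i\ge 1$ because $C$ is very free, while $\deg\big(T_{Y/X}|_{\tilde C}\big)=T_{Y/X}\cdot\tilde C$ grows by $2$ with each unit of $k$ (since $T_{Y/X}\cdot F=2$), so for $k\gg0$ it is $\ge 1$. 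Twisting the sequence by $\OO(-1)$ makes both the sub and the quotient globally generated bundles on $\PP^1$ with $H^1$ of the sub vanishing; hence $T_Y|_{\tilde C}(-1)$ is globally generated and $T_Y|_{\tilde C}=\oplus\OO(b_i)$ with all $b_i\ge 1$. Thus $\tilde C$ is very free in $Y$ and $\pi_*[\tilde C]=[C]$.

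Next I would set up the Gromov--Witten count. Fix two very general points $\tilde p_1,\tilde p_2\in Y$ with images $p_1,p_2\in X$, chosen so that any irreducible rational curve of $Y$ through them is very free and its image is one of the curves through $p_1,p_2$ counted in Theorem~\ref{thm:Fano3}. I would study the invariant $\langle[\mathrm{pt}],[\mathrm{pt}],\dots\rangle^Y_{0,[\tilde C]}$, imposing at the remaining marked points the pulled-back divisor constraints $\pi^*[A^2]$ from the enumerative count on $X$ together with enough constraints of the form $h\cdot\pi^*[A]$ to rigidify the fiber direction and cut the moduli space down to dimension zero. The component whose general member is $\tilde C$ through $\tilde p_1,\tilde p_2$ has the expected dimension and contributes positively, so it suffices to show that no other component contributes negatively.

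Finally I would analyze the remaining contributions by projecting with $\pi$. A stable map to $Y$ in class $[\tilde C]$ meeting the constraints pushes to a genus-zero curve in $X$ of class $[C]$ through $p_1,p_2$ meeting the pulled-back constraints; by the enumerativeness in Theorem~\ref{thm:Fano3} there are only finitely many such base curves $C_j$, each counted positively on $X$. Over a fixed $C_j$ the relevant maps are sections of the Hirzebruch surface $Y|_{C_j}$ through the two prescribed points, and counting these is exactly the $\PP^1$-bundle-over-$\PP^1$ (surface) situation, which is symplectic rationally connected by Proposition~\ref{prop:surface}; this exhibits the invariant of $Y$ as a product of positive contributions. \textbf{The main obstacle} is controlling stable maps that carry vertical (fiber) components: I must show, exactly as in the degeneration analysis in the proof of Theorem~\ref{thm:Fano3}, that after the very general choice of $\tilde p_1,\tilde p_2$ and of the constraints, every such configuration either lies in the expected-dimensional main component or contributes non-negatively to the virtual count, so that the positive main term cannot be cancelled.
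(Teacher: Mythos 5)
Your proposal follows essentially the same route as the paper's proof: use the enumerative count of Theorem~\ref{thm:Fano3} to confine every stable map in the large section class $\Gamma+kF$ meeting the (pulled-back) constraints to the finitely many smooth ruled surfaces $\pi^{-1}(C_j)$ lying over the very free curves counted in $X$, and then reduce to the section count on a ruled surface provided by Proposition~\ref{prop:surface}. The ``main obstacle'' you flag is in fact already disposed of by this confinement: a stable map with vertical components still pushes forward to a connected class-$[C]$ curve meeting all constraints in $X$, hence its whole image lies in some $\pi^{-1}(C_j)$, where the genericity built into the surface count rules out degenerate configurations --- which is exactly how the paper's (equally brief) proof handles it.
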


\begin{proof}
First we choose constraints in $X$ that give the enumerative non-zero Gromov-Witten invariant in Theorem~\ref{thm:Fano3}. So there are only finitely many embedded very free curves satisfying these constraints. Note that $Y \rightarrow X$ is a smooth morphism, thus the inverse images of these curves are smooth ruled surfaces over $\PP^1$. We may take a section $\Gamma$. Let $F$ be the class of a fiber. Now take the constrains in $Y$ to be two points that are mapped to the two points in $X$ and the pull back of other constrains in $X$. Choose $k$ to be large enough. Then any curve in class $\Gamma+k F$ satisfying these constraints are in these ruled surfaces. Thus we may add other constraints to be surfaces intersecting these ruled surfaces at finitely many points. Then we are basically reduced to show that there are non-zero Gromov-Witten invariants 
\[
\langle [pt], \ldots, [pt] \rangle
\]
on a ruled surface. This is easy and follows from Proposition~\ref{prop:surface}.

\end{proof}

\begin{rem}
Let $X$ be a Fano $4$-fold of pseudo-index $2$ and $\rho(X) \geq 2$. If $X$ has a divisorial contraction that contracts a divisor to a point, or a fiber type contraction to a curve, then $X$ is a $\PP^1$ bundle over a smooth Fano $3$-fold. In ~\cite{AG_Fano}, this is proved under the assumption that the index is $2$. But the proof is the same if we only assume the pseudo-index is $2$.
\end{rem}

\section{Del Pezzo fiberations}\label{sec:delpezzo}

Let $X$ be a normal projective threefold with at worst terminal singularities. And let $\pi: X\rightarrow \PP^1$ be a contraction of $K_X$-negative extremal face. Then a general fiber is a smooth Del Pezzo surface. Let $f: Y \rightarrow X$ be a resolution of singularity that is isomorphic near a general fiber. Note that all the exceptional divisors are supported in some fibers of $\pi \circ f: Y \rightarrow X \rightarrow \PP^1$. The main result in this section is the following.

\begin{thm}\label{thm:symGHS}
There is a non-zero Gromov-Witten invariant $\langle [pt], [pt], \ldots\rangle_{0, \beta}^{Y}$ for some class $\beta$ which is a section of the fiberation.
\end{thm}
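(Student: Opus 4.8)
The plan is to produce an explicit very free section of $\pi\circ f\colon Y\to\PP^1$, to let $\beta$ be the class of a \emph{minimal} such section, and to realize the desired invariant as the (virtual) count of sections in class $\beta$ through two general points, cut down to a finite number by auxiliary constraints. As in the proof of Theorem~\ref{thm:Fano3}, the positive contribution of the very free sections is automatic, so the real content is to check that every component of the moduli space contributes non-negatively.

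First I would construct the section. The general fibre of $\pi\circ f$ is a smooth Del Pezzo surface, hence rationally connected, so it contains very free rational curves. Starting from an arbitrary section (which exists by the theorem of Graber--Harris--Starr~\cite{GHS03}), I attach very free curves lying in finitely many general smooth fibres and passing through the section, forming a comb whose teeth are the attached (vertical) curves. Since the teeth can be taken as positive and as numerous as desired, a standard smoothing argument deforms the comb to a genuine section $\sigma\colon\PP^1\to Y$; because smoothing preserves fibre-degree one, $\sigma$ is again a section, and its normal bundle $N_{\sigma/Y}\cong\OO(a_1)\oplus\OO(a_2)$ can be arranged with $a_1,a_2\ge 1$, i.e. $\sigma$ is very free. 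Equivalently $-K_Y\cdot\sigma\ge 4$ and $H^1(\PP^1,N_{\sigma/Y}(-2))=0$, so sections in class $\beta=[\sigma]$ through two general points are unobstructed and move in a family of the expected dimension. I then take $\beta$ to have minimal $-K_Y$-degree among section classes represented by a section through two general points of $Y$.

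Now choose the two point insertions to be very general points $p_1,p_2$, necessarily lying in two general (smooth Del Pezzo) fibres, and take the remaining insertions $A_3,\dots,A_n$ to be general complete intersections of very ample divisors, enough to cut the family of sections through $p_1,p_2$ down to a finite set. The component of $\overline{\mathcal M}_{0,n}^{Y,\beta}$ whose general member is a very free section through $p_1,p_2$ is unobstructed of expected dimension and contributes positively. It remains to analyse the degenerations. Any stable map in a section class decomposes into a unique ``spine'' of fibre-degree one, itself a section of some class $\beta'$, together with vertical components (curves contained in fibres, including possibly the exceptional divisors of $f$, which are vertical) accounting for $\beta-\beta'$. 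By minimality, the spine cannot pass through both $p_1$ and $p_2$ unless $\beta'=\beta$ and there are no vertical bubbles, so the only dangerous configurations are those in which the spine meets one marked point while a vertical component carries the other through its (general) fibre. Since vertical components lie in individual fibres, whereas a general complete-intersection constraint meets only finitely many fibres, the case analysis of Theorem~\ref{thm:Fano3} applies: a sufficiently general choice of the $A_i$ forces any such configuration either to be impossible or to lie in a locus I can avoid, while configurations built entirely from free curves lie in components of the expected dimension and contribute non-negatively.

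The main obstacle is precisely this fibration-specific degeneration analysis. Unlike in the Fano case, $Y$ carries no global positivity, so bend-and-break is not available on $Y$ itself; the control must instead come from the fibre structure, separating horizontal (section) and vertical components and using that vertical curves --- in particular those supported on the singular fibres and on the exceptional divisors of the resolution $f$ --- can be made to miss generic constraints. Ruling out negative contributions from the special fibres and from the exceptional locus of $f$ is where the real work lies. As an alternative route I would keep in reserve a degeneration of the base $\PP^1$: applying the degeneration formula would compare the sought invariant with relative invariants of Del Pezzo fibrations over the two components, reducing inductively to the symplectic rational connectedness of a single smooth Del Pezzo fibre from Proposition~\ref{prop:surface}, in the spirit of the proof of Theorem~\ref{thm:conic}.
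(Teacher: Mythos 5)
Your outline reproduces the general shape of the paper's argument (Graber--Harris--Starr section, comb smoothing to make it free, a minimal section class, two point insertions plus auxiliary constraints, and a decomposition of every degenerate stable map into a section ``spine'' plus vertical pieces), but it defers exactly the step on which the whole proof turns, and your own last paragraph admits as much. The problem is this: vertical components $C_e$ supported on the fibers where $f\colon Y\to X$ fails to be an isomorphism can have \emph{negative} $-K_Y$-degree, since $-K_Y\cdot C_e=-f^*K_X\cdot C_e-(\sum a_iE_i)\cdot C_e$ and the second term is uncontrolled. Consequently your minimality claim fails: a stable map in class $\beta$ may consist of a spine $\beta'$ through both points with $-K_Y\cdot\beta'>-K_Y\cdot\beta$, compensated by exceptional vertical curves of non-positive degree, and no contradiction with minimality arises. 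These are precisely the excess-dimensional loci that could contribute negatively, and ``the case analysis of Theorem~\ref{thm:Fano3} applies'' is not available because that analysis uses ampleness of $-K_X$.

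The paper's proof contains the device that closes this gap, and it is absent from your proposal. Writing $K_Y=f^*K_X+\sum a_iE_i$ with $a_i>0$ (terminality), one first minimizes not the $-K_Y$-degree but the intersection number $A=\min\{(\sum a_iE_i)\cdot s\}$ over all sections; comb smoothing in \emph{general} fibers preserves this number, so one may assume the chosen free section computes $A$. Then, in a degeneration $C\cup C_e\cup C_g$ (spine, exceptional-fiber vertical part, general-fiber vertical part), one has $E_i\cdot C_g=0$, so minimality of $A$ forces $(\sum a_iE_i)\cdot C_e\le 0$ and hence $-K_Y\cdot C_e\ge 0$, with equality only when $C_e$ is contracted by $f$. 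This restores non-negativity of every piece and makes degree counting legitimate. On top of this the paper needs two further quantitative inputs you do not supply: control of the splitting type of the normal bundle (the invariant $B$, obtained by attaching lines, with a separate argument when the general fiber is $\PP^1\times\PP^1$ and $B=0$, using convexity), and the bounds $M$, $N$ coming from ampleness of $f^*(\pi^*\OO(1)-\epsilon K_X)-\sum b_iE_i$, which feed the inequality $2M+3(a+1-M)+N>2+2a+B$ used to rule out non-free spines. Without the discrepancy-divisor minimization and these bounds, your appeal to ``sufficiently general constraints'' is not a proof.
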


\begin{proof}
Fix a polarization on $Y$. By definition of terminal singularity, we have $K_Y=f^* K_X+\sum a_i E_i, a_i >0$. 

By the main theorem in ~\cite{GHS03}, there exists a section of $Y \rightarrow \PP^1$. 

 Let 
\[
A=min\{d \mid (\sum a_i E_i)\cdot s=d, s ~\text{is a section}\}.
\]
 Once we have a section, we can always attach very free curves in general fibers and deform the reducible curve to get a section which is free. This operation will not change the intersection number with the exceptional divisors as long as we attach very free curves in general fibers not containing the exceptional divisors. So there is a free section $s$ such that $(\sum a_i E_i)\cdot s=A$.

Define
\[
B=min\{b\geq 0\mid s \text{ is a section},
             s \cdot (\sum a_i E_i)=A, N_{s/Y}\cong \OO(a)\oplus\OO(a+b), a, b \geq 0\}.
\]

Let $s$ be a free section such that $(\sum a_i E_i) \cdot s =A$, $N_{s/Y}\cong \OO(a)\oplus \OO(a+B), a, B \geq 0$. 

A general fiber of $Y \rightarrow \PP^1$ is a Del Pezzo surface. So it is either $\PP^1 \times \PP^1$, $\PP^2$ or $\PP^2$ blow up at $d (1\leq d \leq 8)$ points. 

If a general fiber is not $\PP^1 \times \PP^1$, then the normal bundle of the section $s$ is $\OO(a) \oplus \OO(a)$. If not, assume it is $\OO(a)\oplus \OO(a+b), b>0$. we attach a line $L$ in a general fiber to $s$ along a general direction. Let $\mcN$ be the normal bundle of this reducible curve in $Y$. Choose a point $p$ in the line $L$ and a divisor $D=q_1+\ldots +q_{a+2}$ in $s$. Let $\mcE=\mcN(-p-D)$. Then 
\[
\mcE \vert_L \cong \OO\oplus \OO, \mcE \vert_s \cong \OO(-1) \oplus \OO(b-2).
\]
We have the short exact sequence of sheaves
\[
0 \to \mcE\vert_L (-n) \to \mcE \to \mcE \vert_s \to 0,
\]
where $n$ in the node of $L \cup s$.
Then $H^1( \mcE)=0$. The same is true for a general deformation by semi-continuity. Thus a general deformation is again a free section $s'$ with $N_{s'/Y}\cong \OO(a')\oplus \OO(a'+b'), a' \geq a+2, b' < b$. This also shows that there are free sections with normal bundle $\OO(a) \oplus \OO(a)$, $a\gg 0$. 

If a general fiber is $\PP^1\times \PP^1$, then the situation is slightly different. If $B>0$, then by the same argument above, we can find very free sections whose normal bundle is $\OO(a)\oplus \OO(a+B)$ with $a$ arbitrarily large by attaching curves of bi-degree (1, 1) in a general fiber to $s$ along a general normal direction. The case that a general fiber is $\PP^1\times \PP^1$ and $B=0$ will be discussed later.

There exist an $\epsilon >0$ such that $\pi ^*\OO(1)-\epsilon K_X$ is ample on $X$ by the construction of $K_X$-extremal contraction. So there are $b_i$'s such that
\begin{align*}
H=&f^*(\pi ^*\OO(1)-\epsilon K_X)-\sum b_i E_i\\
=&f^*\pi ^*\OO(1)-\epsilon K_Y+\sum (\epsilon a_i-b_i) E_i
\end{align*}
is ample. 

Let $s$ be a section with $(-K_Y)\cdot s \leq B$. Then $H \cdot s \leq 1+\epsilon  B+\sum \vert \epsilon a_i-b_i \vert$ since $E_i \cdot s=0$ or $1$. So there are only finitely many such curve classes. Then there is an integer $M$ such that any such sections can meet at most $M$ general curves and there are only finitely many such sections can meet $M$ general curves. We also have a lower bound $-K_Y \cdot s \geq N$ for all such sections.

The above discussion shows that there is a very free section $s$ such that 
\[
(\sum a_i E_i) \cdot s =A,
\]
\[
N_{s/Y} \cong \OO(a)\oplus \OO(a+B),
\]
\[
2M+3(a+1-M)+N >2+2a+B.
\]

Fix one such $a$. Denote by $s$ the minimal section with respect to some polarization in the set of all the very free sections with the above properties. We will prove that this curve class $[s]$ gives a non-zero Gromov-Witten invariant with two point insertions.

A general such section passes through $a+1$ general points. If $B>0$, then we may also add constraint curves in general fibers. Then a general section will meet all of these constraints and contribute positively to the Gromov-Witten invariant $\langle [pt], \ldots, [pt], [\text{Curve}], \ldots, [\text{Curve}]\rangle_{0, s}^{X}$.

Now we will show that any reducible curve in this curve class cannot meet the all the constraints. Write the reducible curve as $C \cup C_e \cup C_g$ where $C$ is a section, $C_e$ the vertical components supported in the fibers where $f$ is not an isomorphism, and $C_g$ all the other vertical components. 

First notice that $E_i \cdot C_g=0$. So $(\sum a_i E_i) \cdot s = (\sum a_i E_i) \cdot (C_e+C)$. Then $(\sum a_i E_i) \cdot C_e\leq 0$ since $(\sum a_i E_i) \cdot s \leq (\sum a_i E_i) \cdot C$. Therefore 
\[
-K_Y \cdot C_e=-f^*K_X \cdot C_e -(\sum a_i E_i) \cdot C_e \geq 0
\]
with equality if and only if $(\sum a_i E_i) \cdot C =A$ and $C_e$ is mapped to a point in $X$. There are three different cases according to what kind of curve $C$ is.
\begin{enumerate}
\item $C$ is a free curve. 

If $-K_Y \cdot C_e >0$, then $-K_Y \cdot (C+C_g) < 2+2a+B$. Suppose $C$ meets $a'$ general points and $b' ( \leq B)$ general curves in the fiber. Then $-K_Y \cdot C \geq 2a'+b' $. For the remaining constraints, a curve through a general point has $-K_Y$ degree at least $2$. A curve in a general fiber has $-K_Y$ degree at least $1$. Then the total $-K_Y$ degree of $C_e+C+C_g$ is strictly greater than $2a'+b'+2(a+1-a')+(B-b')=2+2a+B$. This is impossible. 

So $-K_Y \cdot C_e=0$ and $(\sum a_i E_i)\cdot C=A$. Let $N_{C/Y}\cong \OO(a'')\oplus \OO(a''+b'')$. Then $b'' \geq B$ by definition. Note that $C_e$ does not meet any constraints since we choose all the constraints to lie in general fibers. Also note that $C$ can pass through at most $a''+1$ points. A similar argument as above shows that the $-K_Y$ degree does not match unless $b''=B$, $C$ meet $a''+1$ points together with $B$ general curves, and $C_g$ is a bunch of free curves whose $-K_Y$ degree is $2$. Then the reducible curve $C \cup C_g$ deforms to an irreducible section curve which may pass $a+1$ general points and $B$ general curves in general fibers. Notice that this forces the normal bundle of the new section curve to be $\OO(a)\oplus (a+B)$. Then by the minimality of the section curve we start with, $C_e$ is zero. In other words, $C\cup C_e \cup C_g=C \cup C_g$ is in the boundary of the component of expected dimension. So we can choose the constraints to miss them.

\item $C$ is not a free curve and $-K_Y \cdot C > B$. We may choose the $a+1$ points lie in different general fibers and any curve through them is free. Then neither $C$ or $C_e$ passes through any of them. So $-K_Y \cdot C_g \geq 2(a+1)$ and $-K_Y \cdot (C_e+C + C_g) > 0 + 2a+2+B$. This is impossible. 

\item $C$ is not free and $-K_Y \cdot C \leq B$. Again $C$ does not meet any point constraints. So $C_g$ has at least $a+1$ curves $D_i$ in different fibers and $-K_Y \cdot D_i \geq 2$. If $-K_Y \cdot D_i =2$, $D_i$ is an irreducible free curve. There can be at most $M$ such curves and only finitely many sections can meet all of these curves. So if we choose other points to be general, then every curve through those points with $-K_X$ degree $2$ will not meet these sections. Thus for all the other $D_i$'s ( which are possibly reducible), we have $-K_Y \cdot D_i \geq 3$. But again the total $-K_Y$ degree of $C_e+C+C_g$ is greater than $-K_Y \cdot s$ by our choice of $a$.

\end{enumerate}

Finally we discuss the case where a general fiber is $\PP^1\times \PP^1$ and $B=0$. If there are infinitely many sections with normal bundle $\OO(a) \oplus \OO(a)$, then the above argument still works since we can choose $a$ to be large enough. 

So the only remaining case is that there are only finitely many such sections. In particular, there is an upper bound $\alpha$ for all such $a$. Let
\[
B'=min\{b> 0\mid s \text{ is a section},
             s \cdot (\sum a_i E_i)=A, N_{s/Y}\cong \OO(a)\oplus\OO(a+b), a> 0\}.
\]
A similar construction shows that there are sections with the following properties.
\[
(\sum a_i E_i) \cdot s =A,
\]
\[
N_{s/Y} \cong \OO(a)\oplus \OO(a+B'),
\]
\[
2M+3(a+1-M)+N >2+2a+B', a > \alpha.
\]
Again choose the minimal such section with respect to the polarization on $Y$. Then we only need to add one more case of the above proof: $C$ is a free section and $(\sum a_i E_i) \cdot C=A$, $N_{C/Y}=\OO(a') \oplus \OO(a')$. But any irreducible component of $C_g$ which lies in a smooth fiber is free since $\PP^1 \times \PP^1$ is convex. We may add constraints only in smooth fibers. So there is a sub-curve of $C\cup C_g$ which deforms to an irreducible free curve of $-K_Y$ degree at most $2+2a+B'$. Then the normal bundle of the new section has to be $\OO(a)\oplus \OO(a+B')$ since it also meet all the constraints and $a > \alpha$. So $C_e\cup C \cup C_g=C\cup C_g$ and every irreducible component of $C_g$ lies in some smooth fiber. Thus the corresponding point in the moduli lies in the boundary of an irreducible component whose general point is a free curve. We can choose the constraints to avoid such configurations.

\end{proof}

\section{Conic bundles}\label{sec:conic}

Our main goal in this section is to prove the following theorem. 

\begin{thm}\label{thm:conic}
Let $X \rightarrow \Sigma$ be a surjective morphism from a smooth projective rationally connected $3$-fold $X$ to a smooth projective surface $\Sigma$ such that a general fiber is isomorphic to $\PP^1$. Then $X$ is symplectic rationally connected. There is also a non-zero Gromov-Witten invariant of the form $\langle [C], \ldots\rangle_{0, \beta}^{X}$, where $[C]$ is the class of a general fiber.
\end{thm}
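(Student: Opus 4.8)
```latex
\noindent\textbf{Proof proposal.}
The plan is to reduce the problem to the already-established results for Del Pezzo
fiberations and surfaces, using the fact that $X \to \Sigma$ is a $\PP^1$-fiberation
over a \emph{surface} rather than a curve, together with the weak factorization and
blow-up correspondence developed in Section~2. First I would observe that $\Sigma$
is rationally connected: since $X$ is rationally connected and a general fiber is
$\PP^1$, the image of a very free curve shows $\Sigma$ is dominated by rational
curves joining any two general points, so $\Sigma$ is a smooth rational surface.
The strategy is then to produce a non-zero Gromov-Witten invariant of the form
$\langle [C], \ldots \rangle_{0, \beta}^X$ where $[C]$ is a fiber class, and
\emph{afterwards} upgrade this to an invariant with two point insertions to conclude
symplectic rational connectedness.

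\medskip\noindent
The key construction is to slice the fiberation over a general rational curve in the
base. Choose a general very ample curve (or pencil of curves) $B \subset \Sigma$
isomorphic to $\PP^1$; the restriction $X_B \to B$ is a Del Pezzo fiberation
(in fact a conic bundle) over $\PP^1$ after resolving. By Theorem~\ref{thm:symGHS}
applied to $X_B \to \PP^1$, there is a non-zero Gromov-Witten invariant supported on
a section class $\sigma$ of $X_B \to B$. The crucial point is to recognize that this
section, viewed inside $X$, is a multisection of $X \to \Sigma$ whose fiber-intersection
number is controlled. To descend from $X_B$ to $X$ I would use the degeneration and
correspondence machinery of Theorem~\ref{thm:Correspondence} and
Theorem~\ref{thm:invariance}: degenerate $X$ by deforming $B$ to a reducible/nodal
configuration in $\Sigma$, or pull back along $X_B \hookrightarrow X$ and compare the
relative invariants. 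The curve $C$ (a general fiber of $X \to \Sigma$) does not meet
the exceptional loci of any resolution, so the hypotheses of Theorem~\ref{thm:invariance}
are met and the non-vanishing transports to the smooth projective model $X$.

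\medskip\noindent
To obtain the form $\langle [C], \ldots \rangle_{0, \beta}^X$ explicitly, I would build
the curve $\beta$ as the class of a section of $X \to \Sigma$ over a rational curve in
$\Sigma$, which meets a general fiber $C$ in exactly one point; enumeratively this gives
a single point-of-intersection constraint and reproduces the fiber insertion $[C]$ as
the Poincar\'e dual of the fiber class. The remaining insertions are the point-pullback
and the auxiliary curve or divisor classes coming from $\Sigma$ needed to cut the
moduli space down to the expected dimension. Non-negativity of all boundary
contributions follows exactly as in the proof of Theorem~\ref{thm:symGHS}: any
degeneration splits off vertical components in fibers, and by choosing the constraints
generic in $\Sigma$ and in general fibers one forces the boundary configurations into
components of expected dimension or makes them miss the constraints entirely.

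\medskip\noindent
The main obstacle I expect is controlling the \emph{base} surface rather than a base
curve. Unlike the Del Pezzo case over $\PP^1$, here the sections of $X \to \Sigma$ form
a richer family, and a general multisection need not be very free a priori, so the
passage from the fiber invariant $\langle [C], \ldots\rangle$ to a genuine two-point
invariant $\GWpp$ requires gluing a very free curve in a general fiber to a section
and smoothing, while verifying that this smoothed curve dominates $\Sigma$ and passes
through two general points of $X$. Establishing this smoothing — equivalently, the
vanishing of the relevant $H^1$ of the normal bundle of the glued configuration — and
ruling out negative boundary contributions over the (possibly large) discriminant
locus of $X \to \Sigma$ is the technical heart of the argument.
```
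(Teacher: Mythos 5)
Your outline has the right first move (restrict attention to the preimage of a rational curve in the base and look for section classes there), but the mechanism you propose for turning this into an invariant of $X$ has a genuine gap. The surface $X_B=\pi^{-1}(B)$ is a \emph{divisor} in $X$, and Gromov--Witten invariants of a divisor do not transport to the ambient variety: a curve lying in $X_B$ acquires the normal direction $N_{X_B/X}$ in its obstruction theory inside $X$, and components of $\overline{\mathcal{M}}_{0,n}(X,\beta)$ supported in $X_B$ are typically of excess dimension, so non-vanishing on $X_B$ says nothing by itself about the invariant of $X$. The machinery you invoke to "descend" --- Theorems~\ref{thm:Correspondence} and \ref{thm:invariance} --- compares invariants of $X$ with those of its \emph{blow-ups} $\tilde{X}$ (via deformation to the normal cone and relative invariants of $(\tilde X,E)$); it is not a comparison between a divisor and its ambient space, and relative invariants of $(X, X_B)$ cannot see the curves you need precisely because those curves are contained in the relative divisor. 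Note also that Theorem~\ref{thm:symGHS} is stated for threefolds fibered over $\PP^1$ in Del Pezzo surfaces; for the surface $X_B\to B$ the relevant statement is Proposition~\ref{prop:surface}, though this is a minor point compared with the descent problem.

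What the paper does instead is work with the moduli space of maps to $X$ from the start. Proposition~\ref{prop:surface} gives a non-zero \emph{enumerative} invariant $\langle [pt],\ldots,[pt]\rangle^{\Sigma}_{0,\beta}$ on the base; choosing the constraints in $X$ to be two general points and $r-2$ general fibers (whose images are $r$ general points of $\Sigma$), any stable map to $X$ in the class $s_0+kC$ meeting them must project to one of the finitely many curves $\Gamma\subset\Sigma$ of class $\beta$ through those points, hence must lie in $Z=\pi^{-1}(\Gamma)$. This is the localization step your setup lacks: genericity of $B$ alone does not force curves in $X$ into $X_B$. The crux, which your proposal does not identify, is then that $H_2(Z)\to H_2(X)$ is not injective: there are curves in $Z$ whose class in $Z$ differs from $s_0+kC$ but which push forward to $s_0+kC$ in $X$, and these excess components could a priori contribute \emph{negatively}. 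This is not handled by the constraint-genericity arguments of Theorem~\ref{thm:symGHS}; the paper disposes of it by citing Voisin's argument in \cite{VoisinRC}, deforming $Z$ to the blow-up of a Hirzebruch surface at \emph{distinct} points and showing all such contributions are non-negative. Without this step (or a substitute for it), the non-vanishing claim is unproved.
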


There are easier ways to prove the following result. But here we present a proof which only depends on MMP on surfaces and requires nothing about the classification of surface. This proof actually motivates the results in Section~\ref{sec:delpezzo}.
\begin{prop}\label{prop:surface}
Let $\Sigma$ be a rationally connected surface. Then there is a non-zero Gromov-Witten invariant of the form $\langle [pt], [pt], \ldots, [pt]\rangle_{0, \beta}^{\Sigma }$. And this invariant is enumerative.
\end{prop}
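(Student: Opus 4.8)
The plan is to mimic, in dimension two, the strategy of Theorem~\ref{thm:symGHS}: run the minimal model program on $\Sigma$, pick a curve class of smallest degree realized by an irreducible very free rational curve, and show that imposing point conditions forces every stable map in this class to stay irreducible and unobstructed, so that the invariant is genuinely enumerative. Fix an ample $H$ on $\Sigma$. Since a rationally connected surface is rational, MMP on surfaces guarantees that $\Sigma$ carries very free rational curves (it admits a birational morphism to $\PP^2$ or to a ruled surface over $\p$, and a sufficiently positive curve is very free); I will never use which minimal model occurs, only this existence and the description of extremal rays. Among all irreducible very free rational curves let $C$ be one of minimal $H$-degree, write $f\colon \p\to\Sigma$ for its normalization, and set $n=-K_\Sigma\cdot C-1$. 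Since $f^*T_\Sigma\cong\OO(a)\oplus\OO(b)$ with $a,b\geq 1$, one has $-K_\Sigma\cdot C=a+b\geq 2$; after attaching a free curve in a fiber of a ruling and smoothing, exactly as in Theorem~\ref{thm:symGHS}, I may assume $-K_\Sigma\cdot C\geq 3$, hence $n\geq 2$.

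The virtual dimension of $\overline{\mathcal{M}}_{0,n}(\Sigma,[C])$ is $-K_\Sigma\cdot C-1+n=2n$, which equals the codimension of $n$ point constraints, so the invariant $\langle [pt],\ldots,[pt]\rangle^{\Sigma}_{0,[C]}$ with $n$ insertions is the one of virtual dimension zero, and it is the invariant I will show is non-zero and enumerative. Fix very general points $p_1,\dots,p_n$. The locus of irreducible very free curves of class $[C]$ through the $p_i$ is non-empty by the dimension count together with rational connectedness, and it is unobstructed because $h^1(\p,f^*T_\Sigma)=0$ for free $f$; hence this locus is a finite reduced set of immersed curves, each contributing $+1$.

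The heart of the argument is to rule out every other stable map of class $[C]$ meeting all the $p_i$. A very general point lies on none of the countably many rigid rational curves, so every component passing through some $p_i$ is free; bend-and-break then controls the remaining, vertical or contracted, pieces exactly as the decomposition $C\cup C_e\cup C_g$ does in the proof of Theorem~\ref{thm:symGHS}. Concretely, if a limiting map broke as $[C]=\sum_j[C_j]$, the free components through the marked points move in families of dimension $-K_\Sigma\cdot C_j-1$, and a count of these dimensions and of the gluing nodes against the number of point conditions $n=\sum_j(-K_\Sigma\cdot C_j)-1$ shows that such a configuration cannot pass through $n$ very general points unless it is the smoothing of a chain lying in a single component of expected dimension. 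Minimality of the $H$-degree of $C$ then excludes the surviving reducible and multiple-cover configurations: a smoothable chain of free curves produces a very free curve of degree no larger than $C$, while broken pieces of strictly smaller degree cannot absorb all the point constraints. The MMP structure is used here only to enumerate the possible classes $[C_j]$ via the extremal rays of $\Sigma$, never the explicit surface classification.

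Once this step shows that the only stable maps of class $[C]$ through $p_1,\dots,p_n$ are the finitely many irreducible very free curves, the moduli space cut out by the constraints is finite, reduced, and disjoint from the boundary, so the virtual computation reduces to an honest count: the invariant equals the number of such curves, a positive integer, and is therefore enumerative. These minimal very free curves feed directly into Theorem~\ref{thm:conic}. I expect the main obstacle to be precisely this degeneration analysis, namely proving that no reducible, multiply covered, or boundary stable map of class $[C]$ can satisfy all $n$ very general point conditions; it requires combining bend-and-break with the minimality of $[C]$ and a careful bookkeeping of free versus vertical components, mirroring the case distinction for $C\cup C_e\cup C_g$ in the proof of Theorem~\ref{thm:symGHS}.
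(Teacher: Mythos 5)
Your route is genuinely different from the paper's: the paper runs the MMP all the way down, proves the statement on the minimal model --- a geometrically ruled surface over $\p$, handled by the section-plus-fibers bookkeeping of Theorem~\ref{thm:symGHS}, or a Picard rank one Fano surface, handled by minimal free curves --- and then transports the non-zero enumerative invariant back up through the point blow-ups by placing all constraints away from the exceptional loci. You instead stay on $\Sigma$ itself, take a very free curve $C$ of minimal $H$-degree, and try to kill every degenerate contribution by a dimension count plus minimality. As written, that degeneration analysis has a genuine gap, and it is precisely the reason the paper does not argue directly on $\Sigma$. Here is the problem. In any degeneration, a component through two or more of the very general points is very free, hence has smaller $H$-degree than $C$, contradicting minimality; so every component through a point is free but \emph{not} very free and absorbs exactly one point. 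Writing the class as $[C]=\sum_i[C_i]+\sum_j m_j[\Gamma_j]$, where the $C_i$ are the free components (so $-K_\Sigma\cdot C_i\ge 2$, one point each, hence at least $n$ of them) and the $\Gamma_j$ are the components through no point, the identity $n=-K_\Sigma\cdot C-1$ forces $\sum_j m_j(-K_\Sigma\cdot\Gamma_j)\le 1-n\le -1$. In other words, the configurations your argument fails to exclude are exactly those containing rational curves with $K_\Sigma\cdot\Gamma_j>0$, glued to a collection of degree-$2$ free curves, one per point. Such curves genuinely exist on non-minimal rational surfaces: for instance, blow up a rational elliptic surface at a point of a nodal fiber; the proper transform of that fiber is an irreducible rational curve of anticanonical degree $-1$. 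Neither bend-and-break nor minimality of the $H$-degree says anything about these loci: the free pieces are not very free (no contradiction with minimality), the configuration meets all $n$ points, and it moves in a family whose dimension equals the virtual dimension, so it can carry a nontrivial, possibly negative, virtual contribution --- exactly the phenomenon the introduction of the paper warns about. Note also that on such surfaces the cone of curves has infinitely many extremal rays, so your fallback of ``enumerating the possible classes $[C_j]$ via the extremal rays of $\Sigma$'' is not an effective substitute.

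The detour through the minimal model is what removes these configurations. On a Picard rank one Fano surface every curve has positive anticanonical degree, so the count above instantly forces irreducibility. On a Hirzebruch surface the chosen class meets the fiber class once, so any degeneration is one section plus vertical curves; each vertical piece absorbs at most one point and the count closes, even though the negative section has negative anticanonical degree. The final step of the paper --- passing from the minimal model back to $\Sigma$ --- is not a blow-up formula but the elementary observation that the finitely many enumerated curves miss the finitely many blown-up points, so the relevant pieces of the two moduli spaces are identified, together with an intersection-number argument showing no component of a limit can map into an exceptional curve. To repair your proof you would need to show either that the minimal very free class admits no decomposition of the type isolated above, or that such loci contribute zero to the virtual count; neither is addressed in your sketch, and both appear to require the minimal-model geometry you set out to avoid.
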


\begin{proof}

We can run the Minimal model program for $\Sigma$. Then we have a sequence of contractions of $(-1)$-curves: 
\[
\Sigma=X_0 \rightarrow X_1 \rightarrow \ldots \rightarrow X_n,
\]
where $X_n$ is either a geometrically ruled surface over $\PP^1$ or a Fano surface of Picard number $1$ (Thus is $\PP^2$, but we do not need this).

In the former case, we may choose a section $s_0$ of the ruled surface and take the curve class to be $s_0+k F$, where $F$ is a fiber class. If we take $k$ large enough, a general curve in this class is an embedded very free curve passing through $m( \geq 3)$ general points. Now one can use a similar argument as Theorem~\ref{thm:symGHS} to see that this section curve gives a non-zero enumerative Gromov-Witten invariant on $X_n$. 

In the latter case, first choose a minimal free rational curve. If it already very free, then we are done. If not, then notice that we can take the union of two such general curves and a general deformation is a very free curve of $-K_X$ degree $4$. It is easy to see that we get a non-zero enumerative Gromov-Witten invariant in this case. We remark here that by the classification of Del Pezzo surfaces, the minimal free curve is actually very free. But we do not need to know that.

Then the proposition follows from the observation that the corresponding curve class on the blow up gives a non-zero Gromov-Witten invariant.
\end{proof}

The proof of Theorem ~\ref{thm:conic} basically follows the same line as Theorem $2.4$ in~\cite{VoisinRC}, although the setup and statements are slightly different. We only point out necessary changes.
\begin{proof}[Proof of Theorem~\ref{thm:conic}]
By Proposition ~\ref{prop:surface}, there is a non-zero enumerative Gromov-Witten invariant of the form  
\[
\langle \underbrace{[pt], [pt], \ldots, [pt]}_{r ~{ [pt]}}\rangle_{0, \beta}^{\Sigma}.
\]
We also note that a general curve of class $[\beta]$ is an embedded curve. Here we need to know that a Del Pezzo surface of Picard number one is $\PP^2$. The curve class corresponds to a free linear system. So we may choose the constraints in $\Sigma$ to be general such that if $\Gamma$ is the curve through these points, then $Z=\pi^{-1}(\Gamma)$ is a smooth surface. Let $s_0$ be a section of $Z \rightarrow \Gamma$. Choose $k$ large enough. Then it is easy to see that any curve in $X$ in the class $s_0+k C$ which meets $2$ general points and $r-2$ general fibers (or $r$ general fiber $C$) has to be mapped to an irreducible curve in $\Sigma$ through $r$ general points. Thus the curve lies in $Z$. We may take other constraints to be curves meeting the surface $Z$ at finitely many points. There are some positive contributions to the Gromov-Witten invariant coming from these irreducible section curves. Now the problem is, there might be some curve in $Z$ whose curve class is not $\Gamma+k C$ in $Z$ but equals to $\Gamma+k C$ when considered as curve class in $X$. So we have to consider the contributions coming from these curves as well. This has been done in ~\cite{VoisinRC}. By deforming $Z$ to be the blow up of some Hirzebruch surface at \emph{distinct} points, it is shown there that all such contributions are non-negative.

\end{proof}

\begin{rem}
Let $Y \to \Sigma$ be a conic bundle coming from the contraction of a $K_Y$-negative extremal ray. Then we can take a resolution $\Sigma' \to \Sigma$ and a base change $Y' \to \Sigma'$. After further resolving the singularities of $Y'$, we end up with a conic bundle $\tilde{Y} \rightarrow \Sigma'$ such that $\tilde{Y}$ is smooth. This is how one prove the first case of Theorem~\ref{thm:SRC}.
\end{rem}

\begin{rem}
If $Y$ is a smooth $3$-fold and $Y \rightarrow \Sigma$ is a contraction of type (C), then $\Sigma$ is smooth. Also notice that in this case we do not even need to deform $Z$ in the proof of Theorem~\ref{thm:conic} since it is already the blow up of some Hirzebruch surface at \emph{distinct} point.
\end{rem}

\section{$Q$-Fano variety}\label{sec:QFano}
\begin{defn}
Let $X$ be a normal projective variety. $X$ is called a $Q$-Fano variety if $X$ is $Q$-factorial, has terminal singularities and $-K_X$ is ample.
\end{defn}

\begin{defn}
Let $f: C \rightarrow X^{\text{sm}}$ be an embedded very free curve and $N_{C/X} \cong \oplus_{i=1}^{n} \OO(a_i)$ such that $1 \leq a_1 \leq a_2 \leq \ldots, \leq a_n$. Then $C$ is a \emph{balanced curve} if $a_n-a_1 \leq 1$.
\end{defn}

\begin{lem}\label{lem:bound}
Let $X$ be a projective variety of dimension $n$ with terminal singularities.  Then for any rational curve $C$ passing through $r$ very general points , $-K_X \cdot C \geq (n-1)(r-1)+2$. And if equality holds, then $C$ is a curve contained in the smooth locus of $X$.
\end{lem}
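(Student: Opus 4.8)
The plan is to reduce the singular case to the smooth case by resolving, and to prove the smooth case by a standard dimension count on the space of maps through the prescribed points.

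\emph{Reduction to the smooth case.} I would choose a resolution $\pi \colon Y \to X$, isomorphic over $X^{\mathrm{sm}}$, whose exceptional locus is a divisor $\bigcup_i E_i$, and write $K_Y = \pi^* K_X + \sum_i a_i E_i$. Since $X$ has terminal singularities, all $a_i > 0$. Let $\tilde C \subset Y$ be the strict transform of $C$. Because $C$ meets the very general (hence smooth) points $p_1, \dots, p_r$, it is not contained in $\mathrm{Sing}(X)$, so $\tilde C \not\subset E_i$ and therefore $E_i \cdot \tilde C \geq 0$ for every $i$; moreover $\tilde C$ passes through the points $\tilde p_i = \pi^{-1}(p_i)$, which are very general in $Y$. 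By the projection formula,
\[
-K_X \cdot C \;=\; -K_Y \cdot \tilde C + \sum_i a_i\, (E_i \cdot \tilde C) \;\geq\; -K_Y \cdot \tilde C .
\]
Thus it suffices to prove the inequality on the smooth variety $Y$ and to keep track of when equality can occur.

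\emph{The smooth case.} Replacing $f$ by the normalization of $C$, I may assume $f \colon \PP^1 \to Y$ is birational onto its image, with $f_*[\PP^1] = \beta$ and $-K_Y \cdot \beta = -K_Y \cdot \tilde C$. Since $\tilde p_1, \dots, \tilde p_r$ are very general, the marked map $[f]$ lies on an irreducible component $M_0$ of $\overline{M}_{0,r}(Y,\beta)$ whose evaluation map $\mathrm{ev} = (\mathrm{ev}_1, \dots, \mathrm{ev}_r) \colon M_0 \to Y^r$ is dominant (a component with non-dominant evaluation has image a proper closed subset, and a very general point avoids the countable union of all such images). A general member of $M_0$ passes through $r$ general points, hence through a general point, and is therefore free; consequently $M_0$ is generically smooth of the expected dimension
\[
\dim M_0 = -K_Y \cdot \beta + (n-3) + r .
\]
Dominance of $\mathrm{ev}$ forces $\dim M_0 \geq \dim Y^r = rn$, and rearranging gives
\[
-K_Y \cdot \tilde C = -K_Y \cdot \beta \;\geq\; rn - (n-3) - r = (n-1)(r-1) + 2 .
\]
Combined with the reduction above, this proves $-K_X \cdot C \geq (n-1)(r-1)+2$.

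\emph{The equality case.} If $-K_X \cdot C = (n-1)(r-1)+2$, then equality must hold in the displayed inequality of the reduction step, so $\sum_i a_i (E_i \cdot \tilde C) = 0$. As each $a_i > 0$ and $E_i \cdot \tilde C \geq 0$, this gives $E_i \cdot \tilde C = 0$, and since $\tilde C \not\subset E_i$ it follows that $\tilde C \cap E_i = \emptyset$ for all $i$. Hence $\tilde C$ is disjoint from the exceptional divisor $\bigcup_i E_i$ and lies in $Y \setminus \bigcup_i E_i \cong X^{\mathrm{sm}}$; therefore $C = \pi(\tilde C) \subset X^{\mathrm{sm}}$.

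The main obstacle is the smooth dimension count, specifically the input that the general member of the dominating component $M_0$ is free, so that the \emph{actual} dimension of $M_0$ equals the expected dimension $-K_Y \cdot \beta + (n-3) + r$; without freeness the tangent-space bound only yields an inequality in the unhelpful direction (an $h^1$ term appears on the wrong side). This is exactly the standard fact that a general deformation of a rational curve through a very general point is free, applied on the smooth model $Y$. A minor technical point, needed for the equality clause, is to arrange the resolution $\pi$ so that its exceptional locus is a genuine divisor $\bigcup_i E_i$, so that $C$ meeting $\mathrm{Sing}(X)$ is always detected by some $E_i \cdot \tilde C > 0$.
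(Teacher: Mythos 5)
Your proof is correct, and its outer structure coincides exactly with the paper's: resolve $X$ by $\pi\colon Y\to X$ isomorphic over the smooth locus, write $K_Y=\pi^*K_X+\sum a_iE_i$ with $a_i>0$ by terminality, use $E_i\cdot\tilde C\geq 0$ to reduce to a bound on $-K_Y\cdot\tilde C$, and read off the equality case from $\sum a_i(E_i\cdot\tilde C)=0$. Where you diverge is the engine for the smooth-case inequality. The paper simply quotes the pointwise fact that an irreducible rational curve through $r$ very general points on the smooth variety $Y$ has normal bundle $\oplus_i\OO(a_i)$ with every $a_i\geq r-1$, so that $-K_Y\cdot\tilde C=2+\deg N_{\tilde C/Y}\geq (n-1)(r-1)+2$ in one line. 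You instead run a dimension count on $\overline{M}_{0,r}(Y,\beta)$: the very general tuple forces the curve onto a component with dominant evaluation, freeness of a general member (the $r=1$ very-general-point fact) gives generic smoothness of the expected dimension $-K_Y\cdot\beta+(n-3)+r$, and dominance forces this to be at least $rn$. Both arguments are standard and rest on the same circle of ideas, but they trade off differently: your route needs only the $r=1$ freeness statement plus deformation theory at a free map, and you correctly identify that freeness is indispensable there (otherwise one only has the lower bound $\dim M_0\geq\chi$, which points the wrong way); the paper's route needs the stronger $r$-point splitting statement but is shorter and hands you the splitting type of the normal bundle explicitly, which is the form of the fact the paper reuses elsewhere (e.g.\ in Theorem~\ref{thm:symGHS} and Proposition~\ref{prop:QFano}).
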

\begin{proof}
Let $f: Y\rightarrow X$ be a resolution of singularity such that $X$ and $Y$ are isomorphic over the smooth locus of $X$. We have $K_Y = f^* K_X + \sum b_i E_i$ where $E_i$'s are exceptional divisors of $f$ and $b_i>0$. Then the normal bundle of an irreducible rational curve $C$ through $r$ very general points is 
\[
N_{C/Y}\cong \oplus_i \OO(a_i), a_i \geq r-1.
\]
So $-K_X \cdot C \geq -K_Y \cdot C \geq (n-1)(r-1)+2$. The last statement follows from the fact that $ E_i \cdot C \geq 0$ and equality holds if and only if $C$ does not meet $E_i$ or equivalently, $f(C)$ is contained in the smooth locus of $X$.
\end{proof}

\begin{defn}
Let $C_i \subset X_i$ be a curve on a variety $X_i$, $i=1, 2$. We say $(X_1,C_1)$ is \emph{equivalent to} $(X_2, C_2)$ if there is an open neighborhood $V_i$ of $C_i$ in $X_i$ and an isomorphism $f: V_1 \rightarrow V_2$ such that $f\vert_{C_1}:C_1 \rightarrow C_2$ is also an isomorphism.
\end{defn}

\begin{thm}\label{thm:balanced}
Let $X$ be a $Q$-Fano $3$-fold. Assume the smooth locus of $X$ is rationally connected, then there is a very free curve in the smooth locus with normal bundle $\OO(a) \oplus \OO(a), a \geq 1$.
\end{thm}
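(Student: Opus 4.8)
The plan is to begin with any very free curve in the smooth locus and to improve its normal bundle by a sequence of \emph{attach-and-smooth} operations, exactly of the type used in the proof of Theorem~\ref{thm:symGHS}. Since $X^{\text{sm}}$ is rationally connected and the terminal singularities of a $3$-fold are isolated, there is a very free rational curve $C\subset X^{\text{sm}}$; as $C$ and its general deformations sweep out $X$, a general such curve avoids the finitely many singular points, so we may keep everything inside $X^{\text{sm}}$. Consequently $N_{C/X}\cong \OO(a_1)\oplus\OO(a_2)$ with $1\le a_1\le a_2$ is an honest rank-two bundle on $\PP^1$, of degree $-K_X\cdot C-2=a_1+a_2$. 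Writing $b=a_2-a_1$ for the gap, the goal is to reach $b=0$.

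The first issue is a parity constraint: a normal bundle $\OO(a)\oplus\OO(a)$ has even degree, so I must first force $-K_X\cdot C$ to be even. To do this I would take two general very free deformations $C_1,C_2$ of $C$ passing through a common general point of $X^{\text{sm}}$ and meeting transversally there, and smooth the resulting node. As in the smoothing arguments above, the smoothing $C''$ is again a very free curve lying in $X^{\text{sm}}$, and $-K_X\cdot C''=-K_X\cdot C_1+(-K_X\cdot C_2)=2(-K_X\cdot C)$ is even; hence $\deg N_{C''/X}$ is even and its gap $b''$ is even. So I may assume from the start that $b$ is even.

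With an even gap in hand, the core is the balancing step. Assuming $b\ge 2$, I would attach a very free curve $C'$ of small $-K_X$-degree (supplied by rational connectedness of $X^{\text{sm}}$) to $C$ at a general point along a general normal direction, forming a nodal curve $C\cup C'$ with node $n$ and normal sheaf $\mcN$, whose restrictions to the branches are $\mcN|_{C}=N_{C/X}(n)$ and $\mcN|_{C'}=N_{C'/X}(n)$. Twisting $\mcN$ down by $n$ on $C'$ together with an effective divisor of suitable degree on $C$ to produce a sheaf $\mcE$ with $\mcE|_{C}$ and $\mcE|_{C'}$ nonnegative in each summand, one checks via the normalization sequence $0\to \mcE|_{C'}(-n)\to \mcE\to \mcE|_{C}\to 0$ that $H^1(\mcE)=0$. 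By semicontinuity the same vanishing persists for a general deformation, which is therefore a very free curve with strictly more balanced normal bundle, the gap dropping by $2$ and the parity preserved. Iterating drives the gap down to $0$, producing a very free curve with $N\cong\OO(a)\oplus\OO(a)$, $a\ge 1$, inside $X^{\text{sm}}$.

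The hard part will be the cohomological bookkeeping in this last step: choosing the attached curve $C'$ and the twisting divisor so that both $\mcE|_{C}$ and $\mcE|_{C'}$ are free of higher cohomology and the node-twisting sequence forces $H^1(\mcE)=0$, arranged precisely so that the generic smoothing strictly reduces the gap (rather than leaving it fixed or yielding a non-free curve). A secondary point needing care is that every curve produced stays in $X^{\text{sm}}$, so that all these normal bundles are genuine bundles on $\PP^1$ and agree with the intrinsic normal bundle in $X$; this is guaranteed because the singular locus is finite and the relevant very free curves move in families covering $X$.
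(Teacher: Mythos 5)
Your reduction to the smooth locus and your parity normalization are fine (terminal $3$-fold singularities are isolated, and general very free curves avoid them), but the core balancing step rests on an incorrect formula for the normal sheaf of a nodal curve, and once it is corrected the mechanism provably cannot deliver the strict decrease of the gap that your induction needs. You write $\mcN|_C = N_{C/X}(n)$; in fact, for $C\cup_n C'$ nodal in a smooth threefold, $\mcN|_C$ is the \emph{elementary modification} of $N_{C/X}$ at $n$ in the direction of $T_{C'}$, of degree $\deg N_{C/X}+1$, not $+2$. (Check with two lines in $\PP^3$, a flat limit of conics: flatness forces $\chi(\mcN)=\chi(N_{\mathrm{conic}}/\PP^3)=8$, which holds for the elementary modification but gives $10$ with your formula.) Redo your bookkeeping with the correct sheaf: if $N_{C/X}\cong\OO(a)\oplus\OO(a+b)$, $b\ge 2$, and you attach a very free $C'$ with $N_{C'/X}\cong\OO(c)\oplus\OO(c)$ at a general point along a general direction, then $\mcN|_C\cong\OO(a+1)\oplus\OO(a+b)$ and $\mcN|_{C'}\cong\OO(c)\oplus\OO(c+1)$, and whichever way you order the normalization sequence, $H^1(\mcE)=0$ forces $\deg D_0\le a+c+2$. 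Semicontinuity then certifies that the smaller summand of the smoothing is at least $a+c+1$, while its total degree is $2a+b+2c+2$; that is, you certify only $\text{gap}\le b$ --- exact break-even, never a strict drop. Your claimed ``gap drops by $2$'' is precisely the overcount coming from the wrong $+2$ twist at the node. The same computation shows break-even for \emph{any} attached curve whose normal bundle has even degree. The device does give a strict decrease in Theorem~\ref{thm:symGHS}, but only because the curves attached there are lines in fibers with $-K_Y\cdot L=1$ and $N_{L/Y}\cong\OO(-1)\oplus\OO$, which are strictly cheaper than any free curve; rational connectedness of the smooth locus provides no such curves here. Concretely, on $X=\PP^3$ every rational curve has $-K_X$-degree $4d$, so all normal bundles have even degree and no attachment ever improves the certified gap; worse, your own parity step applied to two lines produces a conic with $N\cong\OO(2)\oplus\OO(4)$, so the gap has gone \emph{up} and your procedure is stuck certifying $\text{gap}\le 2$ forever after.

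The paper's proof runs in the opposite direction: instead of smoothing upward it degenerates downward, where Lemma~\ref{lem:bound} supplies exactly the rigidity that semicontinuity cannot. Given an unbalanced very free $C$ with $N\cong\OO(a+2b)\oplus\OO(a)$, one deforms $C$ with $a+1$ points fixed so that it sweeps out a surface, and invokes Shen's classification (\cite{ShenMM}) of the resulting pair: either $(\PP^2,\text{conic})$, in which case $C$ degenerates to two lines, each very free with normal bundle $\OO(1)\oplus\OO(1)$; or $(\FF_n,\text{positive section})$, in which case an intersection-theoretic analysis shows the rulings $F$ satisfy $N_{F/Y}\cong\OO\oplus\OO$ and $C$ specializes to a lower-degree section $C'$ union $b$ rulings, with $C'$ still through the $a+1$ general points; Lemma~\ref{lem:bound} then forces $-K_X\cdot C'=2a+2$, hence $N_{C'/X}\cong\OO(a)\oplus\OO(a)$ and $C'\subset X^{\mathrm{sm}}$. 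The two inputs you are missing --- Shen's structure theorem for the swept surface and the degree lower bound of Lemma~\ref{lem:bound} --- are what allow the induction to run downhill in degree, which is the only direction in which it can run.
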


\begin{proof}
Let $f: Y \rightarrow X$ be a resolution of singularity which is isomorphic over the smooth locus of $X$. Let $C$ be a very free curve in the smooth locus and general in an irreducible component of moduli space of very free curve. We may assume that $-K_X \cdot C$ is an even number ( otherwise take a two-fold cover and a general deformation). Assume the normal bundle of $C$ is $\OO(a+2b) \oplus \OO(a), a, b \geq 1$. In the following we will not distinguish between $-K_X$ and $-f^*{K_X}$.

We can deform $C$ with $a+1$ points fixed and the deformation of $C$ sweeps out a surface $\Sigma$ in $Y$. Let $\Sigma'$ be the normalization and $\tilde{\Sigma}$ be a desingularization of $\Sigma'$. Then it is proved in ~\cite{ShenMM} that the pair $(\tilde{\Sigma}, C)$ is equivalent to $(\PP^2, conic)$ or $(\FF_n, \text{positive section})$ where $\FF_n$ is the $n$-th Hirzebruch surface. 

 Note that in the $\PP^2$ case $N_{C/Y}\cong \OO(4)\oplus \OO(2)$. So $-K_X \cdot C=-K_Y \cdot C=8$. $C$ may degenerate into two "lines" which passes through $2$ very general points, thus very free with normal bundle $\OO(1) \oplus \OO(1)$. Note that a general line is necessarily contained in the smooth locus since the intersection number with $-K_X$ is $4$, the same as the intersection number with $-K_Y$. So we are done.

Now assume that the pair is equivalent to $(\FF_n, \sigma)$, where $\sigma$ is a positive section. There is a (reducible) curve $D$ in $\tilde{\Sigma}$ such that $D^2=-n$ and $D \cdot F=1$. Then $C=D+cF$. It is easy to see that $N_{C/\tilde{\Sigma}}\cong \OO(a+2b)$ and $N_{\tilde{\Sigma/Y}}\vert_C \cong \OO(a)$. Thus,
\[
C^2=2c-n=a+2b,
\]
\[
c (-K_X \cdot F) \leq 2+2a+2b.
\]

Note that $-K_X \cdot F \geq 2$ since a general $F$ passes through a very general point. So $n \leq c \leq 1+a+b$. If $c=1+a+b$, then $-K_X \cdot F=2$ and $-K_X \cdot D=0$. The first equality implies that a general $F$ is a free curve in the smooth locus. The second one implies that $D$ is either mapped to a point or into the exceptional divisors of $f: Y \rightarrow X$. But $F \cdot D=1$ in $\tilde{\Sigma}$. So $D$ is mapped to a point in the smooth locus. This implies that there are two free curves in the smooth locus of $X$, each having $-K_X$ degree $2$ and passing through a very general point, meet in the smooth locus. But if we choose the two points to be general enough, any curve of $-K_X$ degree $2$ through these points will not meet each other. Therefore $c \leq a+b$ and $c-n \geq b$. 

Note that 
\[
-K_Y \cdot F\leq -K_X \cdot F \leq \frac{2+2a+2b}{c} = \frac{2(2+2a+2b)}{a+2b+n} \leq \frac{4(a+b+1)}{a+b+1}=4.
\]
Thus $-K_Y \cdot F=2, 3$, or $4$. 

If $-K_Y \cdot F=4$, then $n=0, b=1, -K_Y \cdot F= -K_X \cdot F, -K_X \cdot D=0$. So $F$ is a free curve in the smooth locus of $X$ and $D$ is contracted to a point in the smooth locus. But $(\tilde{\Sigma}, \sigma) $ is equivalent to $\PP^1 \times \PP^1$ with one ruling. So $D$ is a moving curve and $-K_X \cdot D >0$. This is a contradiction. 

It is proved in ~\cite{ShenMM} that there is a neighborhood $U$ of $C$ such that the map $\phi : \tilde{\Sigma} \rightarrow X$ has injective tangent map. Since $C \cdot D >0$, a general fiber $F$ is contained in $U$. Lemma $2.3.13$ in ~\cite{ShenMM} shows that for a general fiber $F$ in $U$, $N_{F/Y}$ cannot be $\OO\oplus \OO(1)$ ( assuming $C \cdot D >0$). Thus $-K_X \cdot F \geq -K_Y \cdot F=2$ and $N_{F/Y}=\OO\oplus \OO$.  

Note that $C$ may specialize to the union of a positive section $C'$ whose curve class is $D+(c-b)F$ with $b$ general fibers. Also note that $C'$ passes through $a+1$ general points in $\tilde{\Sigma}$ since its normal bundle in $\tilde{\Sigma}$ is $\OO(a)$. Since $C$ passes through $a+1$ general points in $X$, the same is true for $C'$. But $-K_X \cdot C' = -K_X \cdot C - b(-K_X) \cdot F \leq 2+2a$. Therefore the equality has to hold by Lemma~\ref{lem:bound} and $C'$ is a very free curve in the smooth locus with normal bundle $\OO(a)\oplus \OO(a)$.
\end{proof}

As a immediate corollary of this theorem and Theorem~\ref{thm:lowdegree}, we get
\begin{cor}\label{cor:FanoBalanced}
The minimal very free curve on a smooth Fano $3$-fold is balanced. And on every smooth Fano $3$-fold, there is an embedded very free curve with normal bundle $\OO(a)\oplus \OO(a), a\geq 1$.
\end{cor}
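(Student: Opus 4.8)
The plan is to read off both assertions from Theorem~\ref{thm:balanced} and Theorem~\ref{thm:lowdegree}, which together constrain the numerical invariants of a minimal very free curve so tightly that only balanced normal bundles survive. First I would note that a smooth Fano $3$-fold $X$ is in particular a $Q$-Fano $3$-fold: being smooth it is $\QQ$-factorial with terminal singularities, and $-K_X$ is ample by definition. Its smooth locus is all of $X$, which is rationally connected. Hence Theorem~\ref{thm:balanced} applies verbatim and yields an embedded very free curve with normal bundle $\OO(a)\oplus\OO(a)$, $a\geq 1$. This establishes the second assertion at once.

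For the first assertion I would argue by minimality. Take $C$ to be a very free curve of minimal anticanonical degree, chosen general in a minimal component so that it is embedded, with $N_{C/X}\cong\OO(a_1)\oplus\OO(a_2)$ and $1\leq a_1\leq a_2$. Since $T_X|_C\cong\OO(2)\oplus N_{C/X}$, we have $-K_X\cdot C=2+a_1+a_2$, and by Theorem~\ref{thm:lowdegree} this is at most $6$, so $a_1+a_2\leq 4$. Enumerating the pairs with $1\leq a_1\leq a_2$ and $a_1+a_2\leq 4$ leaves only $(1,1)$, $(1,2)$, $(1,3)$ and $(2,2)$; all are balanced except $(1,3)$. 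Thus it suffices to rule out $N_{C/X}\cong\OO(1)\oplus\OO(3)$.

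To exclude the case $(1,3)$ I would feed $C$ into the construction from the proof of Theorem~\ref{thm:balanced}, with the numerical input $a=1$ and $b=1$. Here $-K_X\cdot C=6$ is even, so no double-cover reduction is needed. Deforming $C$ through $a+1=2$ fixed points sweeps out a surface whose desingularization $\tilde{\Sigma}$ is, by the structure result invoked there, either $(\PP^2,\mathrm{conic})$ or $(\FF_n,\sigma)$. The $\PP^2$ case would force $N_{C/X}\cong\OO(4)\oplus\OO(2)$ of degree $8$, so we must be in the Hirzebruch branch, where $C$ specializes to a positive section $C'$ together with $b=1$ general fibers. The argument in that proof then shows $C'$ is a very free curve in the smooth locus with normal bundle $\OO(a)\oplus\OO(a)=\OO(1)\oplus\OO(1)$, so $-K_X\cdot C'=4$. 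This contradicts the minimality of $C$ (degree $6$), and therefore the minimal very free curve is balanced.

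The step needing the most care is the verification that the $(1,3)$ case genuinely lands in the $\FF_n$ branch and that the specialization to $C'$ plus a fiber persists as a degree-$4$ very free curve inside the smooth locus; this is precisely the content already proved within Theorem~\ref{thm:balanced}, so the work lies in checking that the input $a=b=1$ is admissible there rather than in any new geometry. A minor point worth recording is that one takes the minimal very free curve general in its component, so that it is embedded and the definition of \emph{balanced} applies.
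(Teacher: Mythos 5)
Your proof is correct and takes essentially the same approach as the paper: the paper derives this corollary directly from Theorem~\ref{thm:balanced} and Theorem~\ref{thm:lowdegree}, exactly the two results you combine. Your enumeration of splitting types and the exclusion of $\OO(1)\oplus\OO(3)$ by re-running the degree-dropping specialization from the proof of Theorem~\ref{thm:balanced} with $a=b=1$ is precisely the content the paper compresses into the phrase ``immediate corollary.''
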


\begin{rem}
If $X$ is a $4$-fold with a contraction of $K_X$-negative extremal face to $\PP^1$, one may try to apply the same strategy in Sec. ~\ref{sec:delpezzo} to show that $X$ is symplectic rationally connected. Then one of the steps is to show that we can increase the $-K_X$ degree of a free curve without making it "less balanced". This corollary might be helpful.
\end{rem}

The importance of these balanced very free curves are clear from the following observation.

\begin{prop}\label{prop:QFano}
Let $X$ be $Q$-Fano $3$-fold and $f: Y \rightarrow X$ be a resolution of singularities. Assume there is a very free curve in the smooth locus of $X$ whose normal bundle is $\OO(a) \oplus \OO(a), a \geq 1$. Then $Y$ is symplectic rationally connected.
\end{prop}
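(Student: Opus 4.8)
The plan is to exhibit the required invariant directly and enumeratively, in the spirit of the proof of Theorem~\ref{thm:Fano3}, using that a balanced curve saturates the bound of Lemma~\ref{lem:bound}. Let $C$ be the given very free curve with $N_{C/X}\cong\OO(a)\oplus\OO(a)$. As $C$ lies in the smooth locus of $X$ and $f$ is an isomorphism there, I regard $C$ as a curve in $Y$ disjoint from every exceptional divisor $E_i$, so that $E_i\cdot C=0$, $N_{C/Y}\cong\OO(a)\oplus\OO(a)$, and $-K_Y\cdot C=-K_X\cdot C=2a+2$. Such a curve is unobstructed and can be made to pass through $a+1$ general points, so I would consider the genus-zero invariant
\[
\langle \underbrace{[pt],\ldots,[pt]}_{a+1}\,\rangle^{Y}_{0,[C]},
\]
whose virtual dimension equals the total codimension of the $a+1$ point constraints. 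Since $a+1\ge 2$, proving this invariant nonzero shows that $Y$ is SRC.

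First I would fix very general points $p_1,\dots,p_{a+1}$ and analyze every stable map of class $[C]$ passing through them. The decisive bookkeeping is a degree identity. For any such map with irreducible components $C_i$ one has $\sum_i E_j\cdot C_i=E_j\cdot[C]=0$ for every $j$, and since $K_Y=f^*K_X+\sum a_iE_i$ with all $a_i>0$, summing the relations $-K_Y\cdot C_i=-K_X\cdot f_*C_i-\sum_j a_j(E_j\cdot C_i)$ yields
\[
\sum_i\bigl(-K_X\cdot f_*C_i\bigr)=-K_Y\cdot[C]=2(a+1).
\]
For a general choice of points each $p_\ell$ lies on a unique component; a component carrying $r_i$ of them maps to a rational curve through $r_i$ general points, so Lemma~\ref{lem:bound} gives $-K_X\cdot f_*C_i\ge 2r_i$, while the remaining components contribute $-K_X\cdot f_*C_i\ge 0$. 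As $\sum_i r_i=a+1$, the two estimates force equality throughout: every point-carrying component is the degree-one image of a balanced curve lying entirely in the smooth locus, and every other component is contracted by $f$.

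Next I would rule out the contracted and the genuinely reducible configurations. A contracted component maps to a point of the singular locus, so by connectedness of the domain it must be attached to a point-carrying component, which would force a point of the smooth locus (lying on $f_*C_i$) to coincide with a singular point; this is impossible, so there are no contracted components. What remains is a connected union of balanced smooth-locus curves $C_1,\dots,C_k$ with $\sum_i r_i=a+1$. Each $C_i$ is unobstructed, hence the corresponding locus of such configurations has dimension $3(a+1)-(k-1)$, which is strictly less than $\dim Y^{a+1}=3(a+1)$ as soon as $k\ge 2$. Therefore its image under the evaluation map cannot dominate $Y^{a+1}$, and general points avoid it. Consequently the fiber of the evaluation map over $(p_1,\dots,p_{a+1})$ consists of finitely many irreducible balanced curves of class $[C]$, each a reduced, unobstructed point of the moduli space.

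Finally, since over these general points the relevant part of the moduli space is a finite set of reduced unobstructed points and any excess components elsewhere miss the constraints, the virtual class contributes $+1$ at each such curve, and the invariant equals the positive number of balanced curves of class $[C]$ through $a+1$ general points --- at least one, because $C$ itself deforms through them. Thus the invariant is nonzero and $Y$ is symplectic rationally connected. I expect the main obstacle to be precisely the excess-dimension control in this boundary analysis: a priori a reducible or non-reduced stratum might acquire extra dimension and still meet the general constraints, and the argument must lean on the equality case of Lemma~\ref{lem:bound} to pin every point-carrying component to an unobstructed balanced curve in the smooth locus, exactly as in the enumerative portion of the proof of Theorem~\ref{thm:Fano3}.
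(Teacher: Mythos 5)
Your proof is correct and takes essentially the same route as the paper's: move the balanced curve into $Y$ away from the exceptional divisors, impose $a+1$ general point constraints, and use the degree bound of Lemma~\ref{lem:bound} together with its equality case to force every constrained stable map of class $[C]$ to be an irreducible balanced curve in the smooth locus, hence unobstructed and counted positively. The paper compresses this into a single sentence; your degree bookkeeping via the discrepancy formula, the exclusion of $f$-contracted components, and the dimension count ruling out reducible configurations are precisely the details it leaves implicit.
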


\begin{proof}
Let $C$ be such a very free curve in the smooth locus of $X$. We can move $C$ away from the locus where $f$ is not an isomorphism. So we get a balanced very free curve $C$ in $Y$ with normal bundle $\OO(a) \oplus \OO(a), a \geq 1$. We choose the constraints to be $a+1$ general points. Then the result follows from the fact that any irreducible curve through $a$ general points has $-K_X$ degree at least $2a$ (c.f. Lemma~\ref{lem:bound}) and equality holds if and only if its image in $X$ is contained in the smooth locus.

\end{proof}

In particular, this gives a new proof that every smooth Fano $3$-fold is symplectic rationally connected. It is proved in ~\cite{ShenMM} that a general very free curve ( in each irreducible component of moduli space of very free curves) in a Fano $3$-fold of Picard number $1$ is balanced except the case that the $3$-fold is $\PP^3$ and the curve is a conic. Then we see that there are infinitely many non-zero Gromov-Witten invariants with two point insertions on such Fano $3$-folds.

In general it is not an easy task to determine if the smooth locus of a $Q$-Fano variety is rationally connected. In the following we prove that this is true for a large class of $Q$-Fano varieties we are interested in.

\begin{prop}\label{prop:Gorenstein}
Let $X$ be a Gorenstein $Q$-Fano 3-fold. Then the smooth locus of $X$ is rationally connected. 
\end{prop}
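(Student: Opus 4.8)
The plan is to exploit two features special to the Gorenstein case: that the singularities are isolated and mild, and that $X$ is smoothable. First I would record the local structure. Since $X$ has terminal Gorenstein singularities, every singular point is an isolated compound Du Val point, so $\Sigma := \mathrm{Sing}(X)$ is a finite set and the smooth locus $X^{sm} = X \setminus \Sigma$ is the complement of a codimension-$3$ subset. This is the only place where the full force of ``Gorenstein'' enters in an essential way: for a general terminal $Q$-Fano the singularities are still isolated, but they need not be smoothable, and the argument below breaks down. Since $X$ is Fano it is rationally connected, so the real issue is purely that a connecting rational curve produced by rational connectedness of $X$ could be forced through one of the finitely many points of $\Sigma$.

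To rule this out I would pass to a smoothing. By Namikawa's smoothing theorem for Fano $3$-folds with terminal Gorenstein singularities there is a flat projective family $\pi: \mcX \to \Delta$ over a disk with $\mcX_0 \cong X$ and with $\mcX_t$ a \emph{smooth} Fano $3$-fold for $t \neq 0$; moreover, because compound Du Val points are hypersurface singularities whose miniversal smoothing has smooth total space, one may arrange $\mcX$ itself to be smooth. In the smooth $4$-fold $\mcX$ the set $\Sigma \subset \mcX_0$ is then just a finite set of points, hence of codimension $4$. On the general fibre $X_t$, Corollary~\ref{cor:FanoBalanced} together with Theorem~\ref{thm:lowdegree} provides a balanced very free rational curve $C_t$ joining two general points, of bounded $-K_{X_t}$-degree. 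As $C_t$ lies in the fibre $X_t$, its normal bundle in $\mcX$ is an extension of the trivial bundle $\OO_{C_t}$ (the fibre direction) by $N_{C_t/X_t} \cong \OO(a)\oplus\OO(a)$, so it has vanishing $H^1$; thus $C_t$ deforms freely inside $\mcX$.

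The heart of the argument is then a dimension count followed by a specialization. Fix two disjoint sections of $\pi$ through general points of the fibres and consider, over $\Delta$, the family of deformations of $C_t$ in $\mcX$ meeting both sections. Because $\mcX$ is smooth and $\Sigma$ has codimension $4$, the sublocus of curves in this family that meet $\Sigma$ has high codimension, so a general member avoids $\Sigma$; specializing such a member to $t=0$ yields a rational $1$-cycle in $X$ that avoids $\Sigma$, hence lies entirely in $X^{sm}$, and joins the two chosen general points. If the limit is irreducible we are done; in general it is a connected chain of rational curves in $X^{sm}$, and since $X^{sm}$ is smooth such a chain of free curves can be smoothed within $X^{sm}$ to an irreducible rational curve joining the two general points, so $X^{sm}$ is rationally connected. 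I expect the main obstacle to be precisely this last specialization step: one must guarantee that the flat (or cycle) limit of the $C_t$ neither collapses a component onto a point of $\Sigma$ nor degenerates so badly that it no longer connects the two marked sections. Here the bound of Lemma~\ref{lem:bound}, applied on a resolution of $X$ to each component of the limit, is the right tool — it controls the $-K$-degrees of the pieces and forces every surviving component to lie in the smooth locus — but making the family over $\Delta$ proper (working in a relative space of stable maps) and verifying that a two-pointed general member stays disjoint from the finitely many singular points is where the real care is required.
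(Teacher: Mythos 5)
Your skeleton --- Namikawa smoothing, a balanced very free curve with normal bundle $\OO(a)\oplus\OO(a)$ in the nearby smooth fibers, specialization in a relative space of stable maps, and Lemma~\ref{lem:bound} to control the limit --- is the same as the paper's, but the step you yourself flag as ``where the real care is required'' is genuinely missing, and the substitute you offer does not work. Your dimension count (``the sublocus of curves meeting $\Sigma$ has high codimension, so a general member avoids $\Sigma$'') proves nothing about the limit: every complete curve in $\mcX$ lies in a single fiber of $\pi$, so a general member of your family sits in some $X_t$ with $t\neq 0$ and avoids $\Sigma\subset X_0$ vacuously. What must be shown is that some stable map in the fiber of your family \emph{over} $t=0$ --- i.e.\ some limit --- avoids $\Sigma$, and that fiber is exactly where genericity fails: a priori it could consist entirely of maps whose image passes through the singular points, since families of curves through a singular point are not bounded above by the expected-dimension count (obstruction theory degenerates there). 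No genericity argument inside the family over $\Delta$ addresses the fiber over the special point.

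The second gap is quantitative: with only two sections, Lemma~\ref{lem:bound} has no force. Your limit has $-K$-degree $2a+2$, while an irreducible rational curve through $2$ very general points is only required to have degree $\geq 4$; with that much slack nothing forces the components of the limit into $X^{\mathrm{sm}}$. The paper's key move is to impose $a+1$ point conditions rather than $2$: since the balanced curve has normal bundle $\OO(a)\oplus\OO(a)$, it passes through $a+1$ general points, so one chooses $a+1$ sections through $a+1$ general points of $X$ (after a base change) and specializes the curve constrained by all of them. If the limit then decomposes into components $C_i$ passing through $r_i$ of these points, Lemma~\ref{lem:bound} gives
\begin{equation*}
\sum_i \left(-K_X\cdot C_i\right) \;\geq\; \sum_i 2r_i \;\geq\; 2(a+1),
\end{equation*}
which equals the total degree $2a+2$. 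Equality is therefore forced throughout, and the equality case of Lemma~\ref{lem:bound} places every component in the smooth locus (this is the observation from Proposition~\ref{prop:QFano} that the paper invokes, and it is also what makes the pieces free so the chain can be smoothed). This tight degree count is what replaces your dimension estimate; without it the argument does not close.
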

\begin{proof}
 By a result of Namikawa~\cite{Namikawa}, there is a smoothing of $X$, $\pi: \mcX \rightarrow S$ such that a general fiber is a smooth Fano variety and the central fiber $X_0$ is $X$.  

By Corollary ~\ref{cor:FanoBalanced}, there is a very free curve $D$ in a general fiber whose relative normal bundle is $\OO(a)\oplus \OO(a), a \geq 1$. So this curve will passing through $a+1$ general points in a general fiber. Now choose $a+1$ general points in $X$. We can find $a+1$ sections passing through these points in $X$, possibly after a base change. Then if we consider the specialization of the curve $D$ passing through these sections in the relative Kontsevich moduli space, we get a stable map to $X$ whose image contains $a+1$ general points. But as observed in ~\ref{prop:QFano}, the domain has to be irreducible and the image is contained in the smooth locus if the points are chosen to be general.

\end{proof}

\begin{rem}
It follows from the classification of $3$-fold terminal singularities that a Gorenstein terminal singularity is an isolated hypersurface singularity. Thus $X$ is l.c.i. Then the proposition can be proved easily by comparing the deformation space of a very free curve in a resolution and that of its image in $X$.
\end{rem}

\section{Varieties with $b_2(X)=2$}

In this section we prove that a smooth projective rationally connected $3$-fold $X$ with $b_2(X)=2$ is symplectic rationally connected.

There is at least a $K_X$-negative extremal ray. Let $f: X \rightarrow Y$ be the corresponding contraction. Then previous results ~\ref{thm:conic}, \ref{thm:symGHS}, \ref{cor:fanoblowup}, \ref{thm:balanced}, \ref{prop:QFano}, \ref{prop:Gorenstein} cover the cases of type (E1)-(E4), (C), (D). The only remaining case is (E5)-type contraction, where the exceptional divisor $E$ is a smooth $\PP^2$ with normal bundle $\OO(-2)$. $Y$ is a non-Gorenstein $Q$-Fano $3$-fold of Picard number $1$.

Let $C$ be a minimal free curve with respect to $-f^*K_Y$ and an ample divisor $A=-f^*K_Y -\epsilon E, 0 < \epsilon \ll 1$. If $E \cdot C=0$, then $C$ give rise to a free curve in the smooth locus of $Y$. Thus the smooth locus of $Y$ is rationally connected and the result follows from ~\ref{prop:QFano}. If $E \cdot C >0$, we first show that $-K_X \cdot C =2$. If $-K_X \cdot C\geq 3$, then we can deform a general such curve with one point fixed. The deformation in $X$ also gives a deformation of its image in $Y$. Since $E \cdot C \geq 1$, the deformation in $Y$ fixes the chosen point and the unique singular point in $Y$. So by bend-and-break, we get a curve through the same point with smaller $-K_Y$ degree. If the fixed point is chosen to be a very general point, then this curve with smaller $-K_Y$ degree gives a free curve in $X$. This is a contradiction to our choice.

We can take a reducible curve $\Gamma$ to be a union of two such free curves passing through two very general points and a line in $E\cong \PP^2$. It is easy to see that we can smooth the nodes of $\Gamma$ and get a very free curve $\Gamma'$ with $-K_X \cdot \Gamma'=5$. So we choose the constrains to be two very general points and a moving curve which does not meet $E$. 

If there is a degeneration such that the two points are connected by an irreducible curve $C_1$ and $-K_X \cdot C_1\geq 5$, then we can deform $C_1$ with two points fixed. So $C_1$ will degenerate to some reducible or non-reduced curves. If the two points are still connected by an irreducible curve $C_2$ with $-K_X \cdot C_2 \geq 5$, then we can do this again until we end up with one of the following.
\begin{enumerate}
\item There is an irreducible curve $C_n$ passing through the two very general points (thus very free) such that $-K_X \cdot C_n=4$.
\item There are two irreduible curves $D_1$ and $D_2$, each passing through one point.
\end{enumerate}

If the first case actually happens for some degeneration, we can choose a very free curve $D$ such that $-K_X \cdot D=4$ and its $-f^*K_Y$ degree is minimal among all such very free curves. Note that any curve satisfying these two conditions has the same curve class as $D$ since $b_2(X)=2$. We have
\begin{equation}\label{eq1}
-f^*K_Y \cdot D \leq -f^*K_Y \cdot C_n \leq - 2f^* K_Y \cdot C. 
\end{equation}
Now we want to show that this curve $D$ gives a non-zero Gromov-Witten invariant of the form $\langle [pt], [pt] \rangle_{0, D}^{X}$. 

We first claim that if there is an irreducible curve $F$ through the two points, then $-f^*K_Y \cdot F \geq -f^*K_Y \cdot D$ and equality holds if and only if they have the same curve class. Actually, if $-K_X \cdot F=4$, then this follows from our choice of $D$. If $-K_X \cdot F \geq 5$,  then we may deform this curve with two points fixed and we will end up with the same situations as above. We only need to deal with the case where the two points are not connected by an irreducible curve. But in this case we know that the two irreducible components have to be in the curve class $C$ by ~[\ref{eq1}]. But we may choose the two points so that any two curves in class $[C]$ through these two points do not meet. Then there has to be curves in the exceptional divisor and these curves are all the remaining curves. Note that we have
\[
A \cdot D \leq A \cdot (2C+L) \leq A \cdot F \leq A \cdot D,
\]
where $L$ is a line in the exceptional divisor. Therefore $[D]=[F]$. This is impossible since they have different intersection number with $-K_X$. The above argument also shows that no reducible curve of class $[D]$ could contribute to the Gromov-Witten invariant. Thus we are done.

Now assume the first case never happens for the any degeneration. We have $-f^*K_Y \cdot D_i \geq -f^*K_Y \cdot C$ and $A \cdot D_i \geq A \cdot C$ for $i=1, 2$. So the only components that are not in the exceptional divisor $E$ are the two components through the two points. And they have to have the same class as $C$. Then the curve in $E$ has to be a line since the intersection number with $-K_X$ is $5$. So this degeneration is just in the boundary of an irreducible component of expected dimension. We can alway choose the other constraint to miss such configurations.

\bibliographystyle{alpha}
\bibliography{MyBib}

\end{document}